\documentclass{amsart}
\usepackage{amssymb}

\usepackage{mathabx} % Adds a lot of math symbols and modifies the common ones
\usepackage{microtype} % Sublim­i­nal re­fine­ments to­wards ty­po­graph­i­cal per­fec­tion

\usepackage[unicode,pdftex,colorlinks=true]{hyperref} % Turns \ref and \cite command into hyperlinks
	
\usepackage[T1]{fontenc} % Allows diacritics to be copied from the pdf properly
\usepackage[utf8]{inputenc} % Allows diacritics to be included in the source

\usepackage{enumitem} % Used to customize lists
\usepackage{nicefrac} % Typeset in-line fractions in a "nice" way

\usepackage{mathtools} % for multline

\usepackage{stmaryrd} % Only used for \llbracket and \rrbracket (see \Cyl)

%
% tikz
%

\usepackage{tikz}
\usetikzlibrary{arrows,positioning} 
\tikzset{
	%Define standard arrow tip
	>=stealth',
	% Define arrow style
	imp/.style={
		->,
	}
}

%
% Development Macros
%

\usepackage{color,soul}
% \definecolor{ItalianApricot}{rgb}{1,0.7,0.5}
% \sethlcolor{ItalianApricot}
\definecolor{paleblue}{rgb}{0.7804,0.8353,0.9725}
\sethlcolor{paleblue}

%
% Theorem Styles
%

\theoremstyle{plain}
\newtheorem{thm}{Theorem}[section]
\newtheorem{prop}[thm]{Proposition}
\newtheorem{cor}[thm]{Corollary}
\newtheorem{lem}[thm]{Lemma}

\theoremstyle{definition}
\newtheorem{defn}[thm]{Definition}

\newtheorem{obs}[thm]{Observation}

\theoremstyle{remark}
\newtheorem{remark}[thm]{Remark}

\numberwithin{equation}{section}

%
% Macros
%

\renewcommand{\epsilon}{\varepsilon}
\renewcommand{\phi}{\varphi}
\renewcommand{\setminus}{\smallsetminus}

\newcommand{\seq}[1]{{\left\langle{#1}\right\rangle}}
\newcommand{\set}[2]{\left\{#1 \,:\, #2\right\}}

\DeclareMathOperator{\dom}{dom}

\DeclareMathOperator{\uh}{\upharpoonright}

\newcommand{\Q}{\mathbb{Q}}
\newcommand{\R}{\mathbb{R}}

 %search

\newcommand{\Cyl}[1]{\,\,\,\llbracket{#1}\rrbracket}
\newcommand{\rest}[1]{\! \upharpoonright{#1}} 

\newcommand{\conc}{\hat{\,\,}}
\newcommand{\andd}{\,\,\,\&\,\,\,}

\newcommand{\w}{\omega}
\newcommand{\s}{\sigma}
\newcommand \leb{\lambda}
\newcommand{\Tur}{\textup{\scriptsize T}}

\renewcommand{\le}{\leqslant}
\renewcommand{\ge}{\geqslant}
\renewcommand{\leq}{\leqslant}
\renewcommand{\geq}{\geqslant}
\newcommand{\nle}{\nleqslant}
\renewcommand{\preceq}{\preccurlyeq}
\renewcommand{\succeq}{\succcurlyeq}

%
% More Macros
%

\DeclareMathOperator{\High}{High}
\newcommand{\CR}{\textup{CR}}
\newcommand{\MLR}{\textup{MLR}}

\DeclareMathOperator{\id}{id}
\newcommand{\idb}{{\id}^\omega}
\newcommand{\BS}{\omega^\omega}
\DeclareMathOperator{\wt}{wt}

\newcommand{\converge}{\!\!\downarrow}
\newcommand{\diverge}{\!\!\uparrow}

\DeclareMathOperator{\DNC}{DNC}

\newcommand{\PP}{\mathbf{P}}
\newcommand{\TT}{\mathcal A}
\newcommand{\vp}{\varpi}
\newcommand{\vt}{\vartheta}
\newcommand{\vr}{\varrho}
\newcommand{\vs}{\varsigma}

%
% André's Macros
%

\newcommand{\bi}{\begin{itemize}}
\newcommand{\ei}{\end{itemize}}
\newcommand{\bc}{\begin{center}}
\newcommand{\ec}{\end{center}}

\newcommand{\CF}[1]{\textup{CF}_{#1}}
\newcommand{\AMD}{\textup{AMD}}
\newcommand{\PJD}[1]{\textup{PJD}_{#1}}
\newcommand{\JD}[1]{\textup{JD}_{#1}}
\newcommand{\vphi}{\varphi}
\newcommand{\inst}{\texttt{\textup{inst}}}
\newcommand{\sol}{\texttt{\textup{sol}}}

%
% One more macro and a counter
%

\newcommand{\sfup}[1]{\textsf{\upshape #1}}

\setcounter{tocdepth}{1}

\newcommand{\andre}[1]{{\textcolor{red}{Andre: #1}}  }
\newcommand{\joe}[1]{\textcolor{blue}{Joe: #1}}
\newcommand{\noam}[1]{\textcolor{green}{Noam: #1}}

%UNCOMMENT THIS TO HIDE COMMENTS
\renewcommand{\andre}[1]{}
\renewcommand{\joe}[1]{}
\renewcommand{\noam}[1]{}

%
% Document
%

\begin{document}

\title{Highness properties close to PA-completeness}

\date{\today}

\author[N.\:Greenberg]{Noam Greenberg} 
\address[N.\:Greenberg]{School of Mathematics and Statistics, Victoria University of Wellington, Wellington, New Zealand}
\email{greenberg@sms.vuw.ac.nz}
\urladdr{\url{http://homepages.mcs.vuw.ac.nz/~greenberg/}}

\author[J.\:S.\:Miller]{Joseph S. Miller}
\address[J.\:S.\:Miller]{Department of Mathematics, University of Wisconsin--Madison, 480 Lincoln Dr., Madison, WI 53706, USA}
\email{jmiller@math.wisc.edu}
\urladdr{\url{http://www.math.wisc.edu/~jmiller/}}

\author[A.\:Nies]{Andr\'e Nies}
\address[A.\:Nies]{Department of Computer Science, University of Auckland, Private bag 92019, Auckland, New Zealand}
\email{andre@cs.auckland.ac.nz}
\urladdr{\url{https://www.cs.auckland.ac.nz/~nies/}}

\thanks{This research in this paper was commenced   when the authors participated in the Buenos Aires Semester in Computability, Complexity and Randomness, 2013. Greenberg and Nies were  partially supported by a Marsden grant of the Royal Society of New Zealand. Miller was initially supported by the National Science Foundation under grant DMS-1001847; he is currently supported by grant \#358043 from the Simons Foundation. }
%\hl{Nies was supported by a metallic underwire}
\makeatletter
\@namedef{subjclassname@2010}{\textup{2010} Mathematics Subject Classification}
\makeatother
\subjclass[2010]{Primary 03D30; Secondary 68Q30, 03D32}

% 03-XX 			Mathematical logic and foundations
% 	03Dxx 		Computability and recursion theory
% 		03D30   	Other degrees and reducibilities
% 		03D32   	Algorithmic randomness and dimension [See also 68Q30]

% 68-XX 			Computer science
% 	68Qxx 		Theory of computing
% 		68Q30   	Algorithmic information theory (Kolmogorov complexity, etc.) [See also 03D32]

\begin{abstract} Suppose we are given a computably enumerable object arise from algorithmic randomness or  computable analysis. We are interested in 
  the strength of oracles which can compute an object that approximates this  c.e.\      object.   It turns out that,  depending on the type of object, the resulting highness property is either close to,  or equivalent to being PA-complete. We examine, for example, dominating a c.e.\ martingale by an oracle-computable martingale, computing compressions functions for two variants of Kolmogorov complexity, and computing subtrees of positive measure of  a given $\Pi^0_1$ tree of positive measure without dead ends. We prove  a separation result from PA-completeness for the latter property, called the \emph{continuous covering property}. We also separate the corresponding principles in reverse mathematics. 
 % There is an oracle that has the strong continuous covering property and is not PA-complete. \joe{This seems inadequate.} \andre{Will expand. Did you look at the comments below?}
% We identify properties of oracles bringing them close to having PA degree. These properties are based on relativizing concepts from algorithmic information theory, computable analysis, and algorithmic randomness.
\end{abstract}

\maketitle
\tableofcontents

%%%%%%%%
%%%%%%%%
\section{Introduction}  
%%%%%%%%
%%%%%%%%

%
% JOE (6/11/19): This is final
%

Recall that the \emph{PA degrees} are those Turing degrees that can compute a path through every nonempty $\Pi^0_1$ subclass of $2^\omega$, or equivalently, every nonempty, computably bounded $\Pi^0_1$ subclass of $\omega^\omega$. The PA degrees are so named because they are the degrees of complete consistent extensions of Peano Arithmetic. In practice, it is usually easier to think of them as the degrees of DNC$_2$ functions, that is, x diagonally noncomputable, $\{0,1\}$-valued functions. It is easy to see that the collection of all such functions is a $\Pi^0_1$ class in $2^\omega$, and not too difficult to see that all such functions have PA degree. The halting problem, $\emptyset'$, can easily compute a DNC$_2$ function, so it has PA degree. But it is important to understand that PA degrees can be much more computationally feeble, for example, low~\cite{JS:72}.

The PA degrees have played an interesting supporting role in the study of algorithmic randomness, in part because they allow us to approximate certain objects that play a central role. We start with three illustrative examples: plain Kolmogorov complexity, prefix-free complexity, and the optimal supermartingale. All three are intrinsically c.e.\ objects that are optimal in their classes, they are all Turing equivalent to $\emptyset'$, and they all can be approximated using PA degrees. For example, every PA degree computes a martingale that majorizes the optimal c.e.\ supermartingale~\cite{FSY:11}. 

This paper is motivated by the following question: in these and related examples, are the PA degrees necessary? We will see that it depends on the type of example. PA degrees are necessary in the case of plain complexity and the optimal supermartingale, but not in the case of prefix-free complexity. Moving beyond these examples, we will explore the highness class $\High(\CR,\MLR)$, which consists of the oracles relative to which computable randomness implies (unrelativized) Martin-L\"of randomness. Every PA complete set is in this class because, as noted, every PA degree computes a martingale that majorizes the optimal c.e.\ supermartingale. In this case, it remains open whether PA completeness is necessary.  
%%%%%%%%
\subsection*{\texorpdfstring{\boldmath $C$}{C}-compression functions}
%%%%%%%%

Let $C\colon 2^{<\omega}\to\omega$ denote plain Kolmogorov complexity. It is easy to see that~$C$ is computable from $\emptyset'$, and in fact, that they are Turing equivalent. But if all we want to do is find a lower bound for $C$ that respects the same combinatorial restriction as $C$ itself, then it turns out that a PA degree is sufficient. This was first observed by Nies, Stephan, and Terwijn~\cite{NST:05}.

\begin{defn}
A \emph{$C$-compression function} is an injective function $F\colon 2^{<\omega}\to 2^{<\omega}$ such that $|F(\sigma)|\leq C(\sigma)$ for all $\sigma$.	
\end{defn}

Intuitively, we think of $F$ as mapping $\sigma$ to a minimal program for $\sigma$. By definition, $f(\sigma) = |F(\sigma)|$ is a lower bound for $C(\sigma)$ and at most~$2^n$ strings have $f$-complexity~$n$.  %$2^n-1$ strings have $f$-complexity strictly less than $n$. \noam{seems a bit more natural to me?}
Alternately, from an $f$ with these properties, it is easy to compute a $C$-compression function.

The property of being a $C$-compression function is $\Pi^0_1$; if $F$ is not a $C$-compression function, then this is eventually apparent. Furthermore, there is a constant $c$ such that $C(\sigma)\leq |\sigma|+c$, for all $\sigma$, so there are only finitely many possibilities for $F(\sigma)$. In particular, the collection of all $C$-compression functions is a computably bounded $\Pi^0_1$ class. Therefore, every PA degree computes a $C$-compression function. Nies, Stephan, and Terwijn~\cite{NST:05} used this fact, along with the low basis theorem, to prove that every $2$-random has infinitely many initial segments with maximal $C$-complexity (up to a constant). Note that in this case, the PA degrees are used as a tool in proving a result that makes no mention of them.

Kjos-Hanssen, Merkle, and Stephan~\cite[Thm.\ 4.1]{KMS:11} showed that a PA degree is actually necessary to compute a $C$-compression function. In particular, they proved that for some constant $k\in\omega$, there is a uniform procedure that computes a DNC$_k$ function (i.e., a $\{0,\dots,k-1\}$-valued DNC function) from a $C$-compression function. Jockusch~\cite{J:89} showed that DNC$_k$ functions have PA degree. However, he also proved that there is no uniform procedure to compute a DNC$_2$ function from a DNC$_k$ function for $k>2$, so this leaves open a question about uniformity. We prove in Propositions~\ref{prop:C-uniform} and~\ref{prop:C-nonuniform} that the amount of uniformity that is possible depends on the universal (plain) machine that is used to define~$C$. We build a universal machine such that DNC$_2$ functions can be uniformly computed from a $C$-compression function, and another universal machine such that this is impossible.

%%%%%%%%
\subsection*{\texorpdfstring{\boldmath $K$}{K}-compression functions}
%%%%%%%%

Let $K\colon 2^{<\omega}\to\omega$ denote prefix-free (Kolmogorov) complexity. Similar to  $C$, one easily verifies  that the function $K$ is Turing equivalent to~$\emptyset'$. 
\begin{defn}
A \emph{$K$-compression function} is an injective function $F\colon 2^{<\omega}\to 2^{<\omega}$ with prefix-free range such that $|F(\sigma)|\leq K(\sigma)$ for all $\sigma$.	
\end{defn}

This definition ensures that $f(\sigma)=|F(\sigma)|$ is a lower bound for $K(\sigma)$ and, because the range is prefix-free, $\sum_{\sigma\in 2^{<\omega}} 2^{-f(\sigma)} \leq 1$. Conversely, given any such $f$, we can compute a corresponding $K$-compression function $F$ using the Kraft--Chaitin theorem. Note that the property of being a $K$-compression function is $\Pi^0_1$. Furthermore, there is a constant $c$ such that $K(\sigma)\leq 2|\sigma|+c$, for all $\sigma$, so there are only finitely many possibilities for $F(\sigma)$. Thus, as above, the collection of all $K$-compression functions is a computably bounded $\Pi^0_1$ class, so a PA degree can compute a $K$-compression function.  This was observed by Nies~\cite[Solution to Exercise 3.6.16]{N:09} and used by Bienvenu, et al.~\cite[Proposition 3.4 (with Hirschfeldt)]{BGKNT:16}.

In contrast to the previous example, $K$-compression functions are not necessarily PA complete. We show this in Section~\ref{sec:K-compression}. This fact will also follow from Theorem~\ref{thm:main-separation}, but the proof in Section~\ref{sec:K-compression} serves as a nice warm-up for that result.

What are the degrees of $K$-compression functions? We do not have a complete answer, but we can say a couple of things. The $\Pi^0_1$ class of $K$-compression functions was studied by Bienvenu and Porter~\cite[Thm.\ 7.9]{BP:16}, where it was shown that it is \emph{deep} in the sense of their Def.\ 4.1. This implies that no incomplete Martin-L\"of random can compute a $K$-compression function. In Proposition~\ref{prop:sdnc}, we prove that despite the fact that $K$-compression functions do not always compute constant bounded DNC functions, they always compute very slow growing DNC functions: for any computable, nondecreasing, unbounded $h\colon\omega\to\omega\smallsetminus\{0,1\}$, every $K$-compression function computes an $h$-bounded DNC function. Note that for a sufficiently slow growing $h$, the $\Pi^0_1$ class of $h$-bounded DNC functions also is deep~\cite{BP:16}, so this is connected to the previous fact.

%%%%%%%%
\subsection*{Majorizing the optimal supermartingale}
%%%%%%%%

A  c.e.\ supermartingale $m\colon 2^{<\omega}\to \R_{\geq 0}$ is called \emph{optimal} if for each c.e.\ supermartingale $f$ there is a constant $c>0$ such that $cm(\sigma) \ge f(\sigma)$ for each string $\sigma$ (Schnorr; see \cite[Def.\ 5.3.6]{Downey.Hirschfeldt:book}). Obviously, all optimal c.e.\ supermartingales are equal up to multiplicative constants, and so we refer to \emph{the} optimal c.e.\ supermartingale~$m$. Again, $m$ is Turing equivalent to $\emptyset'$ and, again, we can bound~$m$ (this time, from above) using a PA degree.  

\begin{prop}[Franklin, Stephan, and Yu~\cite{FSY:11}]\label{prop:majorizes}
Every PA degree computes a martingale that majorizes the optimal c.e.\ supermartingale.
\end{prop}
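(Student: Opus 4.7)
The plan is to express the set of dyadic martingales that majorize $m$ as a nonempty, computably bounded $\Pi^0_1$ class in Baire space; the conclusion then follows from the characterization of PA degrees as those computing paths through such classes.

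After rescaling so that $m(\epsilon) \le 1$, fix a computable nondecreasing rational approximation $m_s \nearrow m$. I would parametrize a candidate martingale as $d(\sigma) = k(\sigma)\, 2^{-|\sigma|}$ for $k\colon 2^{<\w} \to \w$. The martingale identity $d(\sigma 0) + d(\sigma 1) = 2d(\sigma)$ rewrites as the integer equation $k(\sigma 0) + k(\sigma 1) = 4k(\sigma)$, and the majorization $d \ge m$ as $k(\sigma) \ge m(\sigma)\, 2^{|\sigma|}$. Let
\[
 P = \set{ k\colon 2^{<\w} \to \w }{ k(\epsilon) = 2,\ k(\sigma 0) + k(\sigma 1) = 4k(\sigma),\ \text{and}\ k(\sigma) \ge m_s(\sigma)\, 2^{|\sigma|} + 1 \text{ for all } s, \sigma }.
\]
Identifying $k$ with an element of $\w^\w$ in the natural way, $P$ is $\Pi^0_1$; the identity together with $k(\epsilon) = 2$ forces $k(\sigma) \le 2\cdot 4^{|\sigma|}$, so $P$ is computably bounded. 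Given any $k \in P$, the function $d(\sigma) = k(\sigma)\, 2^{-|\sigma|}$ is a dyadic-rational martingale satisfying $d(\sigma) \ge m(\sigma)$ everywhere.

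The main step is to show that $P \ne \emptyset$. I would construct $k$ by induction on $|\sigma|$, preserving the invariant $k(\sigma) \ge m(\sigma)\, 2^{|\sigma|} + 1$, which holds at the root since $m(\epsilon) \le 1$. Given $k(\sigma)$, I need integers $k(\sigma 0), k(\sigma 1)$ summing to $4k(\sigma)$ with $k(\sigma i) \ge m(\sigma i)\, 2^{|\sigma|+1} + 1$. Each minimum acceptable value is at most $m(\sigma i)\, 2^{|\sigma|+1} + 2$, so the sum of the two minimums is at most $(m(\sigma 0) + m(\sigma 1))\, 2^{|\sigma|+1} + 4$. The supermartingale inequality $m(\sigma 0) + m(\sigma 1) \le 2 m(\sigma)$ bounds this by $4 m(\sigma)\, 2^{|\sigma|} + 4$, which is at most $4 k(\sigma)$ by the invariant. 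So an admissible split exists and the invariant is preserved.

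The part I expect to be fussiest is precisely this feasibility calculation: the single unit of slack encoded by the ``$+1$'' in the invariant is exactly what is needed at each step, with equality in the worst case; dropping to $k(\sigma) \ge m(\sigma) 2^{|\sigma|}$ would break the induction. Once $P \ne \emptyset$ is established, any PA oracle computes some $k \in P$ by the PA basis theorem, and then $d(\sigma) = k(\sigma)\, 2^{-|\sigma|}$ is the desired martingale.
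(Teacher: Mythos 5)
Your proof is correct, and it follows the standard route: encode the class of dyadic martingales majorizing $m$ as a nonempty, computably bounded $\Pi^0_1$ class and apply the PA basis theorem. (The paper cites Franklin, Stephan, and Yu for this fact without reproducing the argument, so there is no in-paper proof to compare against, but yours is the natural one.) The parametrization $d(\sigma) = k(\sigma)\,2^{-|\sigma|}$ with integer $k$, the bound $k(\sigma) \le 2\cdot 4^{|\sigma|}$ for computable boundedness, and the feasibility induction all check out; in particular the bookkeeping in the nonemptiness step is right — the per-child rounding can cost up to (nearly) $1$, so the two children can cost up to (nearly) $2$ in total, while the ``$+1$'' slack at the parent, multiplied by $4$, supplies $4$ units, which is enough, and your observation that the invariant without the ``$+1$'' is not self-propagating is also accurate.
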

\noam{I commented the proof out because it doesn't seem to be used elsewhere.}
\andre{it could go into the arxiv version}

As before, we would like to know if the problem of majorizing~$m$ is PA complete. It is; in Proposition~\ref{prop:mdom}, we construct an atomless c.e.\ martingale $M$ such that only the PA degrees can compute a martingale majorizing~$M$. Since the optimal c.e.\ supermartingale majorizes $M$ up to a multiplicative constant, only the PA degrees can compute a martingale that majorizes~$m$. The proof has an interesting case breakdown that introduces nonuniformity. We show in Proposition~\ref{prop:majorizing-nonuniform} that this nonuniformity is necessary; for no  $k\in\omega$  is  there a uniform procedure (or even a finite collection of procedures) that computes a DNC$_k$ function from every martingale that majorizes $m$. Contrast this with the case of $C$-compression functions. 
%\andre{Can we show that for each $k$, DNC$_k$ is Medvedev above dominating the universal left-c.e. MG?} \joe{We haven't tried to prove this; if not, then it would be a bushy trees proof}

%%%%%%%%
\subsection*{Jordan decomposition}
%%%%%%%%

Our next application of PA degrees is not as obviously related to algorithmic randomness as the first three. Brattka, Miller, and Nies~\cite{BMN:16} used the fact that a PA degree can compute a Jordan decomposition \emph{on the rational numbers} of a computable function of bounded variation. This is connected to the topic of this paper for two reasons. First, it was used in their proof of a result of Demuth~\cite{D:75}: $x\in[0,1]$ is Martin-L\"of random if and only if every computable function $f\colon[0,1]\to\R$ of bounded variation is differentiable at $x$. So it is an application of PA degrees to randomness. Second, we will see in Section~\ref{subsec:Jordan} that the problem of finding a Jordan decomposition on the rationals is closely related to computing a martingale that majorizes an atomless c.e.\ martingale.

If $f\colon[0,1]\to\R$ and $x\in [0,1]$, then the \emph{variation of $f$ on $[0,x]$} is
\[
V_f(x) = \sup_{P}\sum_{i=1}^{n_P} |f(t_i)-f(t_{i-1})|,
\]
where $P$ ranges over finite sequences $0\leq t_0 < t_1 < \cdots < t_{n_P}\leq x$. If $V_f(1)$ is finite, we say that $f$ has \emph{bounded variation}. Jordan proved that a function $f\colon[0,1]\to\R$ has bounded variation if and only if there are nondecreasing functions $g,h\colon[0,1]\to\R$ such that $f = g-h$. In particular, we can take $g = V_f$ and $h = f-g$. Moreover, if $f$ is continuous, then $V_f$ is also continuous, so both $g$ and $h$ can be taken to be continuous.

A PA degree is \emph{not} sufficient to find a continuous Jordan decomposition of a computable function of bounded variation.

\begin{thm}[Greenberg, Nies, and Slaman (unpublished)]\label{thm:Jordan-continuous}
There is a computable function $f\colon [0,1]\to \R$ of bounded variation such that every continuous nondecreasing $g\colon [0,1]\to \R$ for which $g-f$ is also nondecreasing computes $\emptyset'$. 
\end{thm}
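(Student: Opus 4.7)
The plan is to construct a computable continuous $f$ of bounded variation whose positive variation function $V_f^+$ is Turing equivalent to $\emptyset'$, and to arrange the construction so that the continuity of $g$ (equivalently, of $h := g - V_f^+$, which is nondecreasing) forces $g$ to reveal $V_f^+$ at a decodable sequence of scales. The key preliminary observation is that $h(b)-h(a) \leq g(b)-g(a)$ on every interval, since $g$ and $V_f^+$ are both nondecreasing with $g = V_f^+ + h$; hence the modulus of continuity of $h$ is bounded by that of $g$, and the latter is computable from the oracle $g$.

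The construction builds $f$ as a uniform computable limit of continuous piecewise linear functions $f_s$. Fix pairwise disjoint closed intervals $I_e = [a_e, b_e] \subset [0,1]$ with $|I_e| \to 0$, and inside each $I_e$ a sequence of pairwise disjoint subintervals $J_e^{(k)}$, $k \in \omega$, with widths $w_e^{(k)}$ shrinking super-exponentially, say $w_e^{(k)} = 2^{-2^{e+k}}$. Place the $J_e^{(k)}$'s with care so that $f$ remains computable (e.g., so that rationals of small denominator lie outside every $J_e^{(k)}$ with small $e+k$). At stage $s$, for each $e$ entering $\emptyset'_s \smallsetminus \emptyset'_{s-1}$, install once and for all in $f$ a piecewise-linear tent of height $2^{-e-k}$ supported on each $J_e^{(k)}$. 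The total variation contribution per $e$ is $\sum_k 2\cdot 2^{-e-k} = 2^{1-e}$, so $f$ has total variation at most $4$, and the positive variation of $f$ on $J_e^{(k)}$ equals $2^{-e-k}\cdot[e\in\emptyset']$ exactly.

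For the verification, given oracle $g$, compute its modulus of continuity $\omega_g$. To decide whether $e \in \emptyset'$, search for an index $k$ such that $\omega_g(w_e^{(k)}) < 2^{-e-k-1}$; by continuity of $g$, some such $k$ exists. At that scale, $h(J_e^{(k)}) \leq \omega_g(w_e^{(k)}) < 2^{-e-k-1}$, so the quantity $g(J_e^{(k)}) = 2^{-e-k}\cdot[e\in\emptyset'] + h(J_e^{(k)})$ exceeds $2^{-e-k-1}$ exactly when $e \in \emptyset'$. This yields a uniform reduction $\emptyset' \leq_T g$.

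The main obstacle is to balance the conflicting demands on scales and heights: bump heights $2^{-e-k}$ must remain above the noise $h(J_e^{(k)}) \leq \omega_g(w_e^{(k)})$, while the widths $w_e^{(k)}$ must shrink fast enough that some scale is fine enough for every continuous $g$, all while keeping the total variation finite and $f$ itself computable. The super-exponential shrinkage $w_e^{(k)} = 2^{-2^{e+k}}$ suffices because for any continuous $g$, $\omega_g(w_e^{(k)}) \to 0$ as $k \to \infty$ at a rate that eventually beats the geometrically decreasing threshold $2^{-e-k-1}$; combined with a careful placement of the $J_e^{(k)}$'s to ensure that for each rational $x$ and precision $2^{-n}$ only finitely many bumps of relevant amplitude can touch $x$ (and only ones whose status we can defer to a large enough stage), this yields a computable $f$ with the required property.
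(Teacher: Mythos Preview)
Your construction has two genuine gaps.

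\textbf{The function $f$ is not computable.} When $e$ enters $\emptyset'$ at some stage $s$, you install on $J_e^{(0)}$ a tent of height $2^{-e}$, a quantity that does not depend on $s$. Hence $\|f_{s}-f\|_\infty$ is at least $2^{-e}$ for the least $e$ entering $\emptyset'$ after stage~$s$; there is no computable uniform rate of convergence. Equivalently, if $x$ is the midpoint of $J_0^{(0)}$, then $f(x)\in\{0,1\}$ depending on whether $0\in\emptyset'$, so $f(x)$ cannot be approximated to precision $1/2$ without deciding that question. Your ``careful placement'' remark addresses only rational inputs and does not repair the failure of uniform approximability.

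\textbf{The verification step is unjustified.} The claim that $\omega_g(w_e^{(k)})<2^{-e-k-1}$ for some $k$ is false in general: a continuous nondecreasing $h$ may increase by $2^{-k}$ on each $J_0^{(k)}$ (these increments sum to a finite value, so such an $h$ exists), and then $g=V_f^{+}+h$ has $\omega_g(w_0^{(k)})\geq 2^{-k}>2^{-k-1}$ for every $k$. Super-exponential shrinkage of the widths does not force the modulus below a geometrically decreasing threshold.

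The paper's proof decouples the two roles you have conflated. For each $n$ one uses a single saw-tooth whose \emph{height} is $2^{-s}$ (so $\|f_{s}-f\|_\infty\leq 2^{-s}$ and $f$ is computable) but whose \emph{positive variation} is the fixed amount $2^{-n}$ (so the coding survives). The saw-tooth is placed in the $s$-th of a sequence of subintervals accumulating at a single point $q^n_\omega$. For the decoding one only needs continuity of $g$ at $q^n_\omega$: find any $s$ with $g(q^n_\omega)-g(q^n_s)<2^{-n}$; since $g$ must absorb at least $2^{-n}$ of positive variation on $[q^n_s,q^n_\omega]$ whenever $n$ enters after stage $s$, one concludes $n\in\emptyset'\iff n\in\emptyset'_s$. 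No shrinking threshold is needed; the comparison is against the fixed quantity $2^{-n}$.
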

\begin{proof}
Fix disjoint rational intervals $I_n$ in $[0,1]$. In each $I_n$, fix an increasing sequence of rational numbers $q^n_k$ converging to $q^n_{\omega} = \max I_n$. Let $I_{n,k} = [q^n_k, q^n_{k+1}]$. Define the function $f$ as follows. If $n$ enters $\emptyset'$ at stage $s$, make $f\rest {I_{n,s}}$ a ``saw-tooth'' with positive variation $2^{-n}$ but of height $2^{-s}$. Elsewhere make $f = 0$. Note that at stage $s$ we can approximate $f$ to within $2^{-s}$, so $f$ is computable. 
	
Suppose that $g$ is a solution. For each $n$, find an $s$ such that $g(q^n_\omega) - g(q^n_s) < 2^{-n}$. Then $n\in \emptyset'$ if and only if $n\in \emptyset'_s$. 	
\end{proof}

Brattka, et al.~\cite{BMN:16} considered a weaker version of Jordan decomposition, one that can be solved using a PA degree. Let $I_\Q = [0,1]\cap\Q$. Given a computable function $f\colon[0,1]\to\R$ of bounded variation, they considered the problem of finding nondecreasing functions $g,h\colon I_\Q\to\R$ such that $f\uh I_\Q = g-h$. We will see that this problem is essentially the same as finding a martingale that majorizes an atomless c.e.\ martingale. This allows us to conclude that a PA degree is necessary, in general, to find Jordan decompositions on the rationals. Furthermore, the connection to martingales implies that we cannot uniformly compute DNC$_k$ functions, for any $k$, from the Jordan decompositions of a computable function of bounded variation.

%%%%%%%%
\subsection*{\texorpdfstring{\boldmath $\High(\CR,\MLR)$}{High(CR,MLR)}}
%%%%%%%%

We turn to an application of PA degrees to randomness where the relationship to PA-completeness remains open. We say that an oracle $D$ is \emph{high for computable randomness versus Martin-L\"of randomness} if whenever a sequence is computably random relative to $D$, it is Martin-L\"of random. We abbreviate this property as $\High(\CR,\MLR)$ and use the same notation for the collection of such oracles. Franklin, Stephan, and Yu~\cite{FSY:11} were the first to study highness for pairs of randomness notions, and in particular, were the first to study the class $\High(\CR,\MLR)$.\footnote{Lowness for pairs of randomness notions was introduced earlier by Kjos-Hanssen, Nies, and Stephan \cite{KNS:05}.} They observed that if $D$ has PA degree, then it is $\High(\CR,\MLR)$. To see this, note that by Proposition~\ref{prop:majorizes}, there is a $D$-computable martingale $M$ that majorizes the optimal c.e.\ supermartingale. In particular, $M$ succeeds on every non-ML-random sequence, so no non-ML-random can be computably random relative to $D$.

\begin{prop}[Franklin, Stephan, and Yu~\cite{FSY:11}]\label{prop:PA-high}
Every PA-complete oracle is $\High(\CR,\MLR)$.
\end{prop}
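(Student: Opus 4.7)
The plan is to simply chain together Proposition~\ref{prop:majorizes} with the characterization of Martin-L\"of randomness via the optimal c.e.\ supermartingale. The idea is already sketched in the paragraph immediately preceding the statement, so this is really a bookkeeping argument.

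First, suppose $D$ has PA degree. By Proposition~\ref{prop:majorizes}, there is a $D$-computable martingale $M$ and a constant $c > 0$ such that $M(\sigma) \ge c \cdot m(\sigma)$ for every $\sigma \in 2^{<\omega}$, where $m$ is the optimal c.e.\ supermartingale. In particular, for any $X \in 2^\omega$, if $m$ succeeds on $X$ (i.e., $\limsup_n m(X\rest n) = \infty$), then so does $M$.

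Next, recall the standard fact (Schnorr) that $X \in 2^\omega$ is Martin-L\"of random if and only if the optimal c.e.\ supermartingale $m$ does not succeed on $X$. Now suppose $X$ is computably random relative to $D$. Then, by definition, no $D$-computable martingale succeeds on $X$; in particular, $M$ does not succeed on $X$. By the majorization above, $m$ does not succeed on $X$ either, and hence $X$ is Martin-L\"of random. This shows $D \in \High(\CR,\MLR)$, as required.

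No real obstacle arises: the entire content lies in Proposition~\ref{prop:majorizes}, which is assumed. The only point to double-check is that majorization up to a multiplicative constant is enough to transfer success from $m$ to $M$, which is immediate since success of a (super)martingale is invariant under multiplication by a positive constant.
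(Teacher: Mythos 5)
Your argument is exactly the one the paper sketches in the paragraph preceding the statement: apply Proposition~\ref{prop:majorizes} to get a $D$-computable martingale majorizing the optimal c.e.\ supermartingale $m$, then use Schnorr's characterization of ML-randomness via $m$ to conclude. Correct, and same approach.
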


Most similar highness classes and lowness classes for pairs of randomness notions are well-understood. In contrast, we cannot answer a very fundamental question about $\High(\CR,\MLR)$: is every $\High(\CR,\MLR)$ oracle PA-complete? Some things are known. Franklin, et al.~\cite{FSY:11} showed that every $D\in \High(\CR,\MLR)$ computes a Martin-L\"of random, and that the class $\High(\CR,\MLR)$ has measure zero. Note that if $D$ is $\High(\CR,\MLR)$, we know that all $D$-computable martingales together succeed on the non-ML-random sequences. In fact, we can do better:

% 2009
\begin{prop}[Kastermans, Lempp, and Miller; see Bienvenu and Miller~{\cite[Proposition 20]{BM:12}}]\label{prop:single-martingale}
If $D$ is $\High(\CR,\MLR)$, then there is a \emph{single} $D$-computable martingale $N$ that succeeds on every non-ML-random.
\end{prop}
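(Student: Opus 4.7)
The plan is to effectively combine all $D$-computable martingales into a single universal one, analogously to the construction of a universal Martin-L\"of test. Uniformly in $e$, I first construct a $D$-computable martingale $N_e$ with $N_e(\emptyset) = 1$ such that whenever $\Phi_e^D$ is itself a total rational-valued martingale with $\Phi_e^D(\emptyset)=1$, we have $N_e = \Phi_e^D$. Define $N_e$ by recursion on $|\sigma|$: set $N_e(\emptyset) = 1$; given $N_e(\sigma)$, attempt to compute $\Phi_e^D(\sigma 0)$ and $\Phi_e^D(\sigma 1)$, and if both converge to nonnegative rationals satisfying the martingale identity at $\sigma$ relative to $N_e(\sigma)$, adopt them; otherwise freeze by setting $N_e(\sigma 0) = N_e(\sigma 1) = N_e(\sigma)$. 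This makes $N_e$ a $D$-computable martingale that agrees with $\Phi_e^D$ whenever the latter is well-behaved.

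Next, put $N(\sigma) = \sum_e 2^{-e-1} N_e(\sigma)$. As a convex combination of martingales starting at $1$, $N$ is itself a martingale with $N(\emptyset) = 1$. Each $N_e$ satisfies $N_e(\sigma) \leq 2^{|\sigma|}$ by induction, so the tail past index $k$ is bounded by $2^{-k}\cdot 2^{|\sigma|}$; hence $N$ is $D$-computable as a real-valued function. To see that $N$ succeeds on every non-ML-random $X$: by the hypothesis $D\in \High(\CR,\MLR)$, $X$ is not computably random relative to $D$, so some $D$-computable martingale $M$ succeeds on $X$. Using the standard savings trick to replace $M$ by a rational-valued $D$-computable martingale preserving success, and then normalizing so that the initial value is $1$, we may assume $M = \Phi_e^D$ for some index $e$. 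Then $N_e = M$ succeeds on $X$, so $N \geq 2^{-e-1} N_e$ does too.

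The main obstacle is the final reduction from an arbitrary $D$-computable real-valued martingale succeeding on $X$ to one of the standard form $\Phi_e^D$ (total, rational-valued, with initial value $1$). This is a familiar technique in algorithmic randomness but is the only nontrivial ingredient; the enumeration of the $N_e$ and the convex-combination step are routine bookkeeping.
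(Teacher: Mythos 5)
There is a genuine gap, and it is fatal to the approach: the uniform construction of $N_e$ is not $D$-effective. To compute $N_e(\sigma 0)$ and $N_e(\sigma 1)$, your ``freeze'' rule requires deciding whether $\Phi_e^D(\sigma 0)$ and $\Phi_e^D(\sigma 1)$ converge, which is a $\Sigma^0_1(D)$ question, not a $D$-decidable one. A stage-by-stage dovetailing version does not repair this: if at stage $s$ you commit to the frozen values because $\Phi_e^D$ has not yet halted, and at stage $s+1$ it does halt to different values, your approximations to $N_e(\sigma 0)$ do not stabilize; if instead you commit permanently to the frozen values, then $N_e$ need not equal $\Phi_e^D$ even when the latter is a perfectly good total $D$-computable martingale. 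The set of indices $e$ for which $\Phi_e^D$ is a total $D$-computable rational-valued martingale is $\Pi^0_2(D)$ and admits no $D$-effective enumeration, which is exactly what your construction would provide.

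More decisively, your argument proves too much and hence cannot be correct: it would produce, for \emph{every} oracle $D$, a single $D$-computable martingale $N$ that succeeds on every $X$ that is not $D$-computably random (note that the hypothesis $D\in\High(\CR,\MLR)$ enters only at the very end, to pass from ``non-ML-random'' to ``non-$D$-computably-random''). No such universal martingale exists: given any $D$-computable martingale $N$, define $X$ by always stepping to the side where $N$ does not increase, i.e.\ $X(n)=0$ if $N((X\rest n)\conc 0)\le N((X\rest n)\conc 1)$ and $X(n)=1$ otherwise. Then $N(X\rest{n+1})\le N(X\rest n)$ for all $n$, so $N$ is bounded along $X$ and does not succeed; but $X$ is $D$-computable and therefore certainly not $D$-computably random. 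Thus the intermediate claim that $N_e$ can be made to equal $\Phi_e^D$ uniformly is false, not merely hard. The cited proof of Kastermans, Lempp, and Miller uses the $\High(\CR,\MLR)$ hypothesis in an essential structural way (via a covering/compactness argument against the $\Pi^0_1$ class of ML-random sequences), precisely because one cannot effectively enumerate or combine all $D$-computable martingales; the ``combine everything into a universal one'' strategy that works for lower semi-computable (super)martingales and Martin-L\"of tests is exactly what fails here.
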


Note that $N$ need not majorize the optimal c.e.\ supermartingale $m$; it must only succeed on every sequence on which $m$ succeeds.

The class $\High(\CR,\MLR)$ also appears in unpublished work of Miller, Ng, and Rupprecht. Building on the proposition above, they proved that there is a single $D$-computable martingale that succeeds on every non-computably random sequence if and only if $D$ is $\High(\CR,\MLR)$ or high (i.e., $D'\geq_\Tur\emptyset''$). The same oracles are necessary to compute a single martingale that succeeds on every non-Schnorr random sequence. Finally, $D$ computes a single martingale that succeeds on all non-Kurtz random sequences if and only if $D$ is $\High(\CR,\MLR)$ or has hyperimmune degree.

%%%%%%%%
\subsection*{Covering properties}
%%%%%%%%

In attempting to capture computability-theoretic properties weaker than---but close to---PA, we introduce two ``covering properties''. Both properties say, in different ways, that an oracle can compute a ``small'' cover of a ``small'' c.e.\ object.

For the first,  if $\bar A = \seq{A_n}$ is a sequence of subsets of $\w$, we let $\wt(\bar A) = \sum_n 2^{-n}|A_n|$. In other words, every element of $A_n$ receives weight $2^{-n}$. We say that $\bar B$ \emph{covers} $\bar A$ if $A_n\subseteq B_n$ for all $n$. An oracle $D$ has the \emph{discrete covering property} if every uniformly c.e.\ sequence of finite weight is covered by some $D$-computable sequence of finite weight. In Section~\ref{sec:discrete}, we prove that an oracle $D$ has the discrete covering property if and only if it computes a $K$-compression function. We also prove that such an oracle computes slow growing DNC functions: for any order function $h\colon\omega\to\omega\smallsetminus \{0,1\}$, there is an $h$-bounded DNC function computable from $D$.

\medskip
For our second covering property, if $U\subseteq 2^{\w}$ is open, then let $S_U$ be the set of strings $\s$ such that $[\s]\subseteq U$. We say that an oracle $D$ has the \emph{continuous covering property} if for every $\Sigma^0_1$ class $U\subseteq 2^\w$ of measure less than $1$, there is an open superset $V\supseteq U$ such that $\leb(V)<1$ and $S_V$ is computable from $D$. Equivalently, by focusing on the complements of the open sets, $D$ has the continuous covering property if for any computable tree $T$ such that $\leb([T])>0$, there is a $D$-computable tree $S$ with no dead ends such that $S\subseteq T$ and $\leb([S])>0$. If we further require that every nonempty, relatively clopen subtree of~$S$ has positive measure, we get a useful variant: the \emph{strong continuous covering property}.

In Section~\ref{sec:continuous}, we prove that the (strong) continuous covering property is implied by $\High(\CR,\MLR)$ and that it implies the discrete covering property. We do not know if either implication is strict. However, in Theorem~\ref{thm:main-separation}, we prove that having the strong continuous covering property is strictly weaker than having PA degree; this is the most difficult argument  of the paper.

Recall that Franklin, et al.~\cite{FSY:11} showed that every oracle in $\High(\CR,\MLR)$ computes a Martin-L\"of random. This also holds for an oracle $D$ with the continuous covering property. To see this, take a $\Sigma^0_1$ class $U\subseteq 2^\omega$ such that $\leb(U)<1$ and all non-ML-random sequences are in $U$. For example, $U$ could be the second  component of   a universal Martin-L\"of test. Let $T$ be a computable tree such that $[T] = 2^\omega\smallsetminus U$. Take a $D$-computable subtree $S\subseteq T$ with no dead ends such that $\leb([S])>0$. Of course, every infinite path through $S$ in Martin-L\"of random, and since $S$ has no dead ends, there are paths computable from $S\leq_\Tur D$. We have proved:

\begin{prop}\label{prop:continuous-ML}
Every oracle with the continuous covering property computes a Martin-L\"of random.
\end{prop}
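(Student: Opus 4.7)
The proposition already has its proof outlined in the paragraph preceding its statement, so the plan is essentially to formalize that outline. The strategy is to produce a nonempty $\Pi^0_1$ class consisting entirely of Martin-L\"of randoms and then use the continuous covering property to extract a $D$-computable member.

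The first step is to invoke the existence of a universal Martin-L\"of test $\seq{U_n}$ and fix, say, $U = U_1$, so that $U$ is a $\Sigma^0_1$ class with $\leb(U) \le 1/2 < 1$ and every non-ML-random sequence belongs to $U$. Let $T \subseteq 2^{<\omega}$ be a computable tree with $[T] = 2^\w \setminus U$; this is a positive-measure $\Pi^0_1$ class all of whose members are Martin-L\"of random.

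The second step is to apply the continuous covering property of $D$ to the $\Sigma^0_1$ class $U$: there exists an open $V \supseteq U$ with $\leb(V) < 1$ such that $S_V$ is $D$-computable. Equivalently, writing $S$ for the tree whose paths form $2^\w\setminus V$, we obtain a $D$-computable subtree $S\subseteq T$ with no dead ends and $\leb([S]) > 0$. (This is just the restatement of the continuous covering property in the ``trees'' formulation given in the paper.)

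The final step is to produce a $D$-computable path through $S$. Since $S$ has no dead ends, every $\sigma\in S$ has at least one immediate successor in $S$, and membership in $S$ is $D$-decidable, so $D$ can inductively construct a path $X\in [S]$ (for example by always choosing the leftmost extension in $S$). Then $X \in [T] \subseteq 2^\w\setminus U$, so $X$ is Martin-L\"of random, and $X \le_\Tur D$. There is no substantive obstacle here; the argument is essentially just unpacking the definition of the continuous covering property against a universal ML-test, and all the work is already carried by the hypothesis.
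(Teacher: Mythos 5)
Your proof is correct and follows exactly the same route as the paper's (which is given in the paragraph immediately preceding the proposition): take a positive-measure $\Pi^0_1$ class consisting of ML-randoms, cover its complement via the continuous covering property to get a $D$-computable dead-end-free subtree of positive measure, and walk down that tree to extract a $D$-computable ML-random. The only cosmetic difference is the choice of which component of the universal test to use as $U$.
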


% \begin{center}
% \begin{tikzpicture}[text centered, node distance=0.75cm, auto]
% 	\node (pa) {PA};
% 	\node[below=of pa] (h) {$\High(\CR,\MLR)$};
% 	\node[below=of h, text width=3.05cm] (cc) {continuous covering\\property};
% 	\node[below=of cc, text width=2.6cm] (dc) {discrete measure\\property};
% 	\node[left=of dc, text width=3.7cm] (Kc) {computes a\\$K$-compression function};
% 	\node[below=of dc, text width=2.3cm] (sdnc) {computes slow\\growing DNC};
% 	\node[right=of sdnc, text width=2.1cm] (mlr) {computes a ML-random};

% 	\path (pa) edge[imp] (h)
% 		(h) edge[imp] (cc)
% 		(cc) edge[imp] (dc)
% 		(cc.east) edge[imp,bend left] (mlr)
% 		(dc) edge[imp,<->] (Kc)
% 		(dc) edge[imp] (sdnc);
		
% 	\path (mlr) edge[imp,<->] node[sloped,yshift=8pt,xshift=-4pt] {$\not$} (sdnc)
% 		(cc.west) edge[imp,bend left=100] node[sloped,yshift=-8pt,xshift=-10pt] {$\not$} (pa.west);

% 	\path (h) edge[imp,bend right] node[right] {\bf\Large ?} (pa)
% 		(cc) edge[imp,bend right] node[right] {?} (h)
% 		(dc) edge[imp,bend right] node[right] {?} (cc)
% 		(dc) edge[imp] node[right,yshift=4pt] {?} (mlr);
% \end{tikzpicture}
% \end{center}

\begin{figure}[t]\label{fig:Muchnik}
\begin{tikzpicture}[text centered, node distance=0.75cm, auto]
	\node (pa) {PA};
	\node[below=of pa] (h) {$\High(\CR,\MLR)$};
	\node[below=of h, text width=3.05cm] (cc) {(strong) continuous\\covering property};
	\node[below=of cc, text width=2.6cm] (dc) {discrete covering\\property};
	\node[left=1.5cm of dc, text width=2.4cm] (Kc) {computes a\\$K$-compression\\function};
	\node[below=of dc, text width=2.6cm] (sdnc) {computes a slow\\growing DNC};
	\node[right=of sdnc, text width=2.1cm] (mlr) {computes a ML-random};

	\path (pa) edge[imp] node[left] {\tiny Prop.~\ref{prop:PA-high}} (h)
		(h) edge[imp] node[left] {\tiny Prop.~\ref{prop:high-continuous}} (cc)
		(cc) edge[imp] node[left] {\tiny Prop.~\ref{prop:continuous-to-discete}} (dc)
		(cc.east) edge[imp,bend left] node[right] {\tiny Prop.~\ref{prop:continuous-ML}} (mlr)
		(dc) edge[imp,<->] node[above] {\tiny Prop.~\ref{prop:equiv}} (Kc)
		(dc) edge[imp] node[left] {\tiny Prop.~\ref{prop:sdnc}} (sdnc);
		
	\path (mlr) edge[imp,<->] node[sloped,yshift=8pt,xshift=-4pt] {$\not$} (sdnc)
		(cc.west) edge[imp,bend left=90] node[sloped,yshift=-8pt,xshift=-10pt] {$\not$} node[left] {\tiny Thm.~\ref{thm:main-separation}} (pa.west);

	\path (h) edge[imp,bend right] node[right] {?} (pa)
		(cc) edge[imp,bend right] node[right] {?} (h)
		(dc) edge[imp,bend right] node[right] {?} (cc)
		(dc) edge[imp] node[right,yshift=4pt] {?} (mlr);
\end{tikzpicture}
\caption{The relationship of the covering properties to other computability-theoretic properties.}
\end{figure}
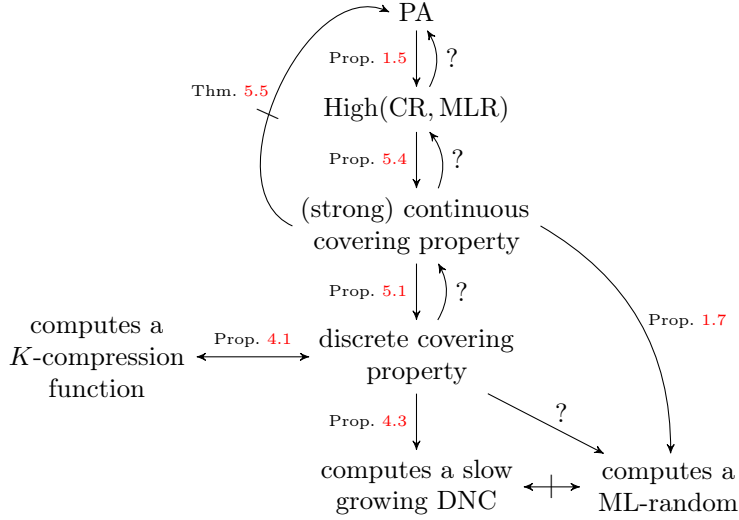

Fig.~\ref{fig:Muchnik} gives a summary of the results related to the covering properties and indicates several open question. Two references are missing from the diagram. The fact that a Martin-L\"of random sequence need not compute a slow growing DNC function was mentioned above; it follows from Bienvenu and Porter's~\cite{BP:16} work on deep $\Pi^0_1$ classes. Greenberg and Miller~\cite{GM:11} proved that slow growing DNC functions do not always compute a Martin-L\"of random. That is, they proved that if $h\colon\omega\to\omega\smallsetminus\{0,1\}$ is any order function (i.e., a computable, nondecreasing, unbounded function), then there is an $h$-bounded DNC function that does not compute a Martin-L\"of random. (A related but different proof of this fact was given by Khan and Miller~\cite{KM:17}.)

%%%%%%%%
\subsection*{Strong weak weak K{\H o}nig's lemma}
%%%%%%%%

In the final section of the paper, we consider   reverse mathematical aspects of some of our results. In particular, we introduce a new principle, \emph{strong weak weak K{\H o}nig's lemma} ({\sffamily\itshape SWWKL}), which corresponds to the strong continuous covering property. It states:

\medskip
\hangindent=30pt
\noindent\hspace{30pt}%
If $T\subseteq 2^{<\omega}$ is a tree with positive measure, then there is a nonempty subtree $S\subseteq T$ such that if $\sigma\in S$, then $S$ has positive measure above $\sigma$.

\medskip
\noindent We also introduce the apparently weaker principle, \emph{weak strong weak weak K{\H o}nig's lemma} ({\sffamily\itshape WSWWKL}), which corresponds to the continuous covering property; it only requires that $S\subseteq T$ has positive measure and no dead ends, so unlike \sfup{SWWKL}, it does not even guarantee that $S$ is perfect. Using the work of the previous sections, we prove that, over $\sfup{RCA}_0$, both \sfup{SWWKL} and \sfup{WSWWKL} are strictly between weak K{\H o}nig's lemma (\sfup{WKL}, the axiom corresponding to the existence of sets of PA degree) and weak weak K{\H o}nig's lemma (\sfup{WWKL}, the axiom corresponding to the existence of Martin-L\"of random sequences).

%%%%%%%%
\subsection*{Other uses of PA degrees in algorithmic randomness}
%%%%%%%%

We started the introduction by declaring that PA degrees play an interesting supporting role in algorithmic randomness. We have presented some evidence for this claim, but the reader should not be led to believe that we have exhausted the subject. Far from it. Without making a complete survey, let us finish the introduction by mentioning a few other examples.  
\smallskip
\begin{itemize}[itemsep=4pt,leftmargin=*]
\item Stephan~\cite{S:06} proved that a Martin-L\"of random sequence has PA degree if and only if it computes $\emptyset'$; this gives an easy proof of a result of Ku{\v c}era~\cite{K:85}, that the degrees of Martin-L\"of random are not closed upward.

\item Barmpalias, Lewis, and Ng~\cite{BLN:10} proved that every PA degree is the join of two Martin-L\"of random degrees. Together with the previous result, this gives many examples of pairs of random degrees that join to a nonrandom degree.

\item Stephan and Simpson~\cite{SS:15} gave an unexpected characterization of the $K$-trivial sets (i.e., those with minimal growth of prefix-free Kolmogorov complexity of their initial segments) as the sets computable from every PA degree relative to which Chaitin's $\Omega$ remains ML-random.

\item Higuchi, Hudelson, Simpson, and Yokoyama~\cite{HHSY:14} proved that strong $f$-randomness is equivalent to $f$-randomness relative to a PA degree. Both $f$-randomness and its strong variant are ``partial randomness'' are notions that have been studied, in various degrees of generality, by several authors.
\end{itemize}

%%%%%%%%
\subsection*{Notation}
%%%%%%%%

For $f,h\in\BS$ we write $f\leq h$ to mean that $f$ is majorized by $h$. In this case, we say that $f$ is \emph{$h$-bounded}. Let $h^\omega = \set{f\in\omega^\omega}{f\leq h}$ be the set of $h$-bounded functions. We write $h^{<\omega}$ for the set $\set{\sigma\in \omega^{<\omega}}{(\forall n<|\sigma|)\; \sigma(n)\leq h(n)}$ of $h$-bounded strings. Let $\idb = \{f\in\BS \colon (\forall n)\; f(n)\leq n\}$, in other words, the \emph{identity bounded} functions. 

We let~$J$ denote a fixed universal partial computable function, based on an acceptable listing of the partial computable functions; a common choice is $J(e)= \varphi_e(e)$. A function~$f$ is diagonally non-computable if $J(e)\ne f(e)$ whenever $J(e)\converge$.

%%%%%%%%
%%%%%%%%
\section{Properties   that imply PA degree}
%%%%%%%%
%%%%%%%%

In this section, we look at three examples  from algorithmic randomness where PA degrees turn out to be necessary. We will see that $C$-compression functions and martingales that majorize the optimal c.e.\ supermartingale must have PA degree. 

We will also show that there is a computable function of bounded variation $f\colon[0,1]\to\R$ such that every Jordan decomposition of~$f$ on the rationals has PA degree. In each case, we examine the amount of uniformity possible.

%%%%%%%%
\subsection{\texorpdfstring{\boldmath $C$}{C}-compression functions}
%%%%%%%

Kjos-Hanssen, Merkle, and Stephan~\cite{KMS:11} gave a uniform procedure to compute a DNC$_k$ function from a $C$-compression function, for some $k$. Since we shortly use it, for completeness, we reproduce their proof.

\begin{prop}[Kjos-Hanssen, Merkle, and Stephan~{\cite[Theorem~4.1]{KMS:11}}] \label{prop:KHMS}
Every $C$-compression function has PA degree. Moreover, for large enough $k\in\omega$, there is a uniform way to compute a DNC$_k$ function from a $C$-compression function.
\end{prop}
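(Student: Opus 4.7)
The plan is to exhibit a single Turing functional $\Phi$ and a constant $k$ such that for every $C$-compression function $F$, the function $\Phi^F$ is a DNC$_k$ function; PA-completeness of $F$ then follows since DNC$_k$ functions have PA degree. Two structural facts about $F$ drive the construction: the compression bound $|F(\sigma)|\leq C(\sigma)$, which provides effective lower bounds on $C$-complexity, and the injectivity of $F$, which by pigeonhole forces $|F^{-1}(2^{\leq n})|\leq 2^{n+1}-1$ for every $n$.

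Given input $e$, I would apply the $s$-$m$-$n$ theorem to construct, uniformly in $i<k$, a partial computable program $\varphi_{p(e,i)}$ that on any input simulates $J$ on $e$ and, if $J(e)\converge$ with $J(e)\equiv i\pmod{k}$, outputs a distinguished test string $\sigma_{e,i}$, and otherwise diverges. The $\sigma_{e,i}$ are chosen to be $k$ distinct strings of a common length, so by injectivity of $F$ the images $F(\sigma_{e,0}),\ldots,F(\sigma_{e,k-1})$ are pairwise distinct. If $J(e)\converge$ and $i^\ast=J(e)\bmod k$, then $p(e,i^\ast)$ supplies a description of $\sigma_{e,i^\ast}$, giving an upper bound on $C(\sigma_{e,i^\ast})$, and hence, via the compression bound, on $|F(\sigma_{e,i^\ast})|$. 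Using $F$ we then compute the $k$ numbers $|F(\sigma_{e,i})|$ and set $g(e)$ to be the least $i<k$ whose value exceeds that bound; injectivity guarantees such an $i$ exists once $k$ is large enough relative to the bound, and by design $g(e)\neq i^\ast$, so $g(e)\neq J(e)$ when $J(e)<k$, while $g(e)\neq J(e)$ is trivial when $J(e)\geq k$.

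The hard part will be uniformity in $k$: a naive application of $s$-$m$-$n$ makes $|p(e,i)|$ grow like $\log e$, so the compressibility upper bound on $|F(\sigma_{e,i^\ast})|$ itself grows with $e$, which would force $k$ to grow with $e$. Overcoming this requires a recursion-theorem-style refinement producing the programs $p(e,i)$ so that the ``gap'' between the guaranteed compressibility of $\sigma_{e,i^\ast}$ and the pigeonhole-forced incompressibility of at least one $\sigma_{e,i}$ depends only on the universal machine $U$ defining $C$, not on $e$. Once that gap is a fixed constant $c$, any $k>2^{c+1}$ will do, and the same functional $\Phi$ works for every $C$-compression function $F$.
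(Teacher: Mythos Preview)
Your proposal has the right high-level target---produce a $\DNC_k$ function uniformly from $F$ and invoke Jockusch---and you correctly isolate the two usable features of~$F$. But the specific construction does not work, and the obstacle you flag in your final paragraph is not a detail to be patched by the recursion theorem; it is the whole problem.

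In your scheme the test strings $\sigma_{e,i}$ are fixed independently of~$F$, and you need $C(\sigma_{e,i^*})\le c$ for a constant~$c$ independent of~$e$, so that $k>2^{c+1}$ suffices for the pigeonhole step. But only finitely many strings have complexity~$\le c$, while infinitely many~$e$ satisfy $J(e)\converge$. If the $\sigma_{e,i}$ genuinely vary with~$e$, any description of $\sigma_{e,i^*}$ must carry enough information to recover which string is meant, so its length grows with~$e$; if they do not vary with~$e$, your output $g(e)$ is eventually constant and cannot be $\DNC_k$. The recursion theorem lets a program know its own index, but it cannot give infinitely many distinct strings a common bounded description length.

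The paper's argument reverses the roles: rather than fixing test strings and certifying that one is \emph{compressible}, it uses~$F$ to locate, for each~$m$, a string $f(m)$ of length~$m$ with $C(f(m))\ge m$ (such a string exists since injectivity of~$F$ gives at most $2^m-1$ strings with $|F(\cdot)|<m$, but there are $2^m$ strings of length~$m$). One then defines a single partial computable $\psi$ by $\psi(\sigma)=\sigma\conc J(|\sigma|)$, which yields $C(\psi(\sigma))<|\sigma|+c$ for a fixed constant~$c$. The $\DNC_k$ value at~$n$, with $k=2^c$, is the last~$c$ bits of $f(n+c)$: if these equalled $J(n)$, then $f(n+c)=\psi\bigl(f(n+c)\uh n\bigr)$ would have complexity $<n+c$, contradicting the choice of~$f$. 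Here the relevant description has length~$n$, not~$O(1)$, but that is harmless because it is being compared against a string already known to have complexity~$\ge n+c$; only the \emph{gap}~$c$ needs to be constant.

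So the missing idea is not a recursion-theoretic refinement but a change of perspective: let the input length carry~$e$ (via the machine~$\psi$), and use~$F$ to find an \emph{incompressible} string of each length rather than trying to certify a compressible one.
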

\begin{proof}
Define a partial computable function $\psi\colon 2^{<\omega}\to 2^{<\omega}$ as follows: if $\sigma\in 2^{<\omega}$ has length $n$ and if $J(n)\downarrow = \tau$ (where we view $\tau$ as an element of $2^{<\omega}$), then $\psi(\sigma) = \sigma\conc \tau$. Note that there is a constant $c\in\omega$ such that $C(\psi(\sigma)) < |\sigma| + c$.

Let $F\colon 2^{<\omega}\to 2^{<\omega}$ be a $C$-compression function. From such an $F$, we can uniformly compute a function $f\colon\omega\to 2^{<\omega}$ such that $|f(n)|=n$ and $C(f(n))\geq n$. Consider the $F$-computable function $g\colon\omega\to 2^{<\omega}$ such that $g(n)$ is the last $c$ bits of $f(n+c)$. We claim that $g$ is DNC$_k$, where $k = 2^c$. Assume that $g(n) = J(n)$. Let $\sigma$ be the length $n$ prefix of $f(n+c)$. Then $C(f(n+c)) = C(\psi(\sigma)) < |\sigma| + c = n+c$, which is a contradiction.
\end{proof}

As we said in the introduction, the $k$ in the previous result depends on the universal machine used to define $C$. By designing our machine for the purpose, we can ensure that there is a uniform procedure to compute a DNC$_2$ function from a $C$-compression function; this is the next result. On the other hand, in Proposition~\ref{prop:C-nonuniform}, we give a universal plain machine for which this fails.

\begin{prop}\label{prop:C-uniform}
There is a universal plain machine $V\colon 2^{<\omega}\to 2^{<\omega}$ such that there is a uniform way to compute a DNC$_2$ function from a $C_V$-compression function.
\end{prop}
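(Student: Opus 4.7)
The plan is to engineer $V$ so that the Kjos-Hanssen--Merkle--Stephan argument (Proposition~\ref{prop:KHMS}) carries through with the optimal constant $c=1$, at which point the last bit of an incompressible length-$(n+1)$ string extracted from $F$ is already a DNC$_2$ value. Concretely, I want to arrange that the partial computable map $\psi(\sigma) = \sigma \conc J(|\sigma|)$---defined when $J(|\sigma|) \in \{0,1\}$---satisfies $C_V(\psi(\sigma)) \le |\sigma|$, i.e.\ with \emph{no} additive overhead coming from the universality of $V$.

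To achieve this I would build $V$ by a dovetailing construction. Fix any standard universal plain machine $U$. On input $\sigma$, $V$ runs the computations of $J(|\sigma|)$ and $U(\sigma)$ in parallel, but with the simulation of $U$ deliberately slowed down (advancing $U$ by one step only after many steps devoted to $J$). If $J(|\sigma|)$ halts first with value $b \in \{0,1\}$, output $\sigma\conc b$; otherwise output $U(\sigma)$. With the slowdown in place, whenever $J(|\sigma|) \downarrow \in \{0,1\}$ the $J$-branch wins, so $V(\sigma) = \sigma\conc J(|\sigma|)$ and the description $\sigma$ itself certifies $C_V(\sigma\conc J(|\sigma|)) \le |\sigma|$. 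The KHMS argument with $c=1$ then yields the uniform extraction: given $F$, compute any $f(n+1) \in 2^{n+1}$ with $|F(f(n+1))| \ge n+1$ (which exists by injectivity of $F$) and output $g(n) :=$ its last bit. For verification: if $g(n) = J(n) \in \{0,1\}$ and $\sigma = f(n+1)\rest{n}$, then $f(n+1) = \sigma\conc J(|\sigma|)$ yields $|F(f(n+1))| \le C_V(f(n+1)) \le n$, a contradiction; and if $J(n)\downarrow$ with $J(n) \notin \{0,1\}$, then $g(n) \in \{0,1\}$ is automatically distinct from $J(n)$.

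The main obstacle I anticipate is verifying that this $V$ is in fact universal, since on every input $\sigma$ with $J(|\sigma|)\downarrow\in\{0,1\}$ the $U$-simulation is effectively overwritten by the special branch. To recover a $V$-description of an arbitrary $U$-output $\tau = U(\rho^*)$ with only constant overhead, I would replace $U$ with a pad-invariant variant (so that $U(\rho^* 0^k) = U(\rho^*)$) and then lengthen $\rho^*$ with zeros until landing on some $m = |\rho^*| + k$ at which $J(m) \notin \{0,1\}$, at which point the $U$-branch of the dovetail wins at $V(\rho^* 0^k)$. The crucial question is whether this $k$ can be bounded by a \emph{constant} independent of $\rho^*$, equivalently, whether $\{m : J(m) \notin \{0,1\}\}$ has uniformly bounded gaps. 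I expect to resolve this either by working with a carefully chosen acceptable listing for $J$ whose ``non-$\{0,1\}$'' indices are syndetic, or by splitting $V$'s inputs into a bounded number of parallel padding channels so that at least one channel meets every gap.
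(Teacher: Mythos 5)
Your core verification step (extracting the last bit of an incompressible length-$(n+1)$ string, and showing that equality with $J(n)$ forces $C_V$ down to $n$) is correct \emph{given} the advertised property of $V$; the paper's verification is essentially the same. The gap is in the construction of $V$.

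The dovetail as described cannot work. No fixed slowdown schedule can guarantee that the $J$-branch ``wins'' whenever $J(|\sigma|)\!\downarrow\,\in\{0,1\}$: for any computable slowdown, there is some $\sigma$ for which $U(\sigma)$ halts quickly while $J(|\sigma|)$ halts astronomically later, and then $V(\sigma)=U(\sigma)$ rather than $\sigma\conc J(|\sigma|)$. The only way to make the $J$-branch always win is to wait for $J(|\sigma|)$ to halt before simulating $U(\sigma)$ at all; but then $V(\sigma)\!\uparrow$ whenever $J(|\sigma|)\!\uparrow$, so universality is no longer automatic, and you are forced into exactly the padding argument you flag as ``the main obstacle.'' You correctly identify that this requires $\{m: J(m)\!\downarrow\,\notin\{0,1\}\}$ to have bounded gaps, which is certainly false for a generic choice of $J$. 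Your two proposed fixes are plausible (e.g.\ replacing $J$ by $J'$ with $J'(2n)=J(n)$, $J'(2n+1)=2$, then translating DNC$_2(J')$ back to DNC$_2(J)$ by $g'(n)=g(2n)$), but they are not carried out, and together with the pad-invariant modification of $U$ they constitute the bulk of the missing work. So as written the proposal has a genuine gap, which you partly acknowledge.

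The paper sidesteps the whole issue by \emph{not} insisting that the short $V$-description of $\psi(\sigma)$ be $\sigma$ itself. It sets $V(00\conc\rho)=\widehat V(\rho)$ for a fixed universal $\widehat V$, which secures universality (with overhead $2$) while leaving $\nicefrac34$ of the strings of each length entirely free. Then it restricts the diagonalization to odd lengths: $\psi(\sigma)=\sigma\conc\phi_n(n)$ only for $|\sigma|=2n+1$. The point is a counting argument: one needs, for each of the $2^{2n+1}$ strings $\sigma$ of length $2n+1$, a free description of length at most $2n+1$, and the free strings of lengths $2n$ and $2n+1$ number $\nicefrac34\cdot 2^{2n+1}+\nicefrac34\cdot 2^{2n}>2^{2n+1}$, so there is room. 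The free descriptions are assigned on the fly as $\phi_n(n)$ converges; there is no race between branches, no padding, and no need to modify $J$. The extraction from $F$ is then exactly as in your proposal, reading the last bit of an incompressible string of even length $2n+2$. In short: you try to force the witness program to \emph{be} $\sigma$, which tangles the diagonalization with the universal simulation; the paper keeps the two disjoint (via the $00$-prefix) and buys the needed slack with the parity trick and a counting argument.
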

\begin{proof}
We modify the proof of the previous proposition. Now let $\psi\colon 2^{<\omega}\to 2^{<\omega}$ be the partial computable function defined as follows: if $\sigma\in 2^{<\omega}$ has length $2n+1$ and if $\phi_n(n)\downarrow = \tau$, then $\psi(\sigma) = \sigma\conc \tau$.

We want $V$ to be a universal plain machine such that $C_V(\psi(\sigma)) < |\sigma| + 1$. Let $\widehat{V}$ be a given universal machine and define $V(00\conc \rho) = \widehat{V}(\rho)$ for all $\rho\in 2^{<\omega}$. This ensures that $V$ is universal, while leaving \nicefrac{3}{4} of the strings of each length free to be used otherwise. Note that $\nicefrac{3}{4}\cdot 2^{2n+1} + \nicefrac{3}{4}\cdot 2^{2n} > 2^{2n+1}$. So we have room left in the domain of $V$ to ensure that $C_V(\psi(\sigma)) \leq |\sigma|$ for every (odd length) $\sigma\in 2^{<\omega}$.

As before, from a $C_V$-compression function $F$, we can uniformly compute an $f\colon\omega\to 2^{<\omega}$ such that $|f(n)|=n$ and $C_V(f(n))\geq n$. Consider the $F$-computable function $g\colon\omega\to 2^{<\omega}$ such that $g(n)$ is the last bit of $f(2n+2)$. We claim that $g$ is DNC$_2$. If not, then $g(n) = \phi_n(n)$ for some $n\in\omega$. Let $\sigma$ be the length $2n+1$ prefix of $f(2n+2)$. Then $C_V(f(2n+2)) = C_V(\psi(\sigma)) \leq |\sigma| = 2n+1$, which is a contradiction.
\end{proof}

Toward proving Proposition~\ref{prop:C-nonuniform}, we need to the following simple combinatorial lemma. It generalizes the observation that either a graph $G$ (on at least two vertices) or its complement $\overline G$ has no isolated vertices: if $v$ is isolated in $G$, then it has edges to every other vertex in $\overline G$. Recall that for any set $X$, the set of subsets of $X$ of size $k$ is written $[X]^k$.

\begin{lem}\label{lem:DumbRT}
Let $X$ be an arbitrary set and fix $k\in\omega$. For any colouring $c\colon[X]^k\to k$, there is an $i<k$ such that
\[
(\forall v\in X)(\exists w_1,\dots, w_{k-1}\in X)\; c(\{v,w_1,\dots,w_{k-1}\}) = i.
\]
\end{lem}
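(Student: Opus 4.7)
The plan is to prove the lemma by contradiction, using the simple counting observation that each of the $k$ colours can be ``witnessed against'' by at most one vertex of $X$, giving at most $k$ witnesses, and that these witnesses can be packed into a single $k$-subset whose colour then yields a contradiction. The statement is essentially vacuous when $|X|<k$ (there are no $k$-subsets to colour and no way to choose the required $w_1,\dots,w_{k-1}$), so the substantive case is $|X|\ge k$, which I would explicitly assume.

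Suppose the conclusion fails. Then for each $i<k$ there is a witness $v_i\in X$ such that no $k$-subset of $X$ containing $v_i$ is coloured $i$. Let $V=\{v_0,v_1,\dots,v_{k-1}\}$; this is a set of size at most~$k$. Using $|X|\ge k$, I would extend $V$ to some $V'\supseteq V$ with $|V'|=k$ by adjoining elements of $X\setminus V$ if necessary.

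Now let $j=c(V')\in k$. Since $v_j\in V\subseteq V'$ and $|V'|=k$, the set $V'$ is a $k$-subset of $X$ containing $v_j$ whose colour is $j$, directly contradicting the choice of $v_j$.

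The argument is essentially a one-line pigeonhole, and there is no real obstacle; the only mild subtlety is that the witnesses $v_i$ need not be distinct (distinct colours could have the same witness), which is why I pad $V$ out to a full $k$-element set $V'$ before reading off its colour. This padding step is exactly where the hypothesis $|X|\ge k$ is used.
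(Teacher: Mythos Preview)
Your argument is correct and is genuinely different from the paper's proof. The paper proceeds by induction on~$k$: if colour~$k$ fails to witness the conclusion, then some vertex~$u$ never sees colour~$k$, so the induced colouring $\hat c(Y)=c(Y\cup\{u\})$ on $[X\setminus\{u\}]^k$ uses only colours $<k$, and the inductive hypothesis applies. Your proof instead takes the failure witnesses $v_0,\dots,v_{k-1}$ all at once, pads them to a single $k$-set, and reads off a contradiction from its colour. This is shorter and more transparent; the pigeonhole content of the lemma is laid bare in a single step rather than hidden in an induction. The paper's inductive approach, on the other hand, explicitly identifies a distinguished vertex~$u$ and reduces to a smaller instance, a structure that loosely mirrors how the lemma is later applied (pairing off elements and extracting a common colour), but this is not essential. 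You are also right to flag the case $|X|<k$ as degenerate; the paper does not address it, and its applications always have $|X|\ge k$.
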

\begin{proof}
We prove this lemma by induction on $k$. Note that it is trivial for $k=1$. Now assume that it holds for $k$ and consider a colouring $c\colon[X]^{k+1}\to {k+1}$.

If the lemma holds for $i=k$, we are done. Otherwise, there is a $u\in X$ such that the induced colouring $\hat{c}$ on $[X\smallsetminus\{u\}]^k$ has range in $k = \{0,\dots,k-1\}$. Hence, by induction, there is an $i<k$ such that
\[
(\forall v\in X\smallsetminus\{u\})(\exists w_1,\dots, w_{k-1}\in X\smallsetminus\{u\})\; \hat{c}(\{v,w_1,\dots,w_{k-1}\}) = i.
\]
But then, for all $v\in X\smallsetminus\{u\}$, we have
\[
(\exists w_1,\dots, w_{k-1}\in X\smallsetminus\{u\})\; c(\{v,u,w_1,\dots,w_{k-1}\}) = i. \qedhere
\]
\end{proof}

The lemma fails if we increase the number of colours. To see this, let $X = \{0,\dots,k\}$ and define $c\colon[X]^k\to k+1$ by $c(\{0,\dots,i-1,i+1,\dots,k\}) = i$. Then for every $i<k+1$, if $Y\subseteq X$ has size $k$ and $i\in Y$, then $c(Y)\neq i$.

\smallskip
The proof of the following proposition illustrates a technique that will be used in later proofs. We want to diagonalize against a functional~$\Gamma$ on some element of a $\Pi^0_1$ class $P$. However, we are not able to effectively guarantee that any \emph{specific} string $\sigma$ has an extension in $P$. Our solution is to use the structure of $P$ together with Lemma~\ref{lem:DumbRT} to diagonalize against~$\Gamma$ on enough strings so that we know that at least one of them is extendible in $P$.

\begin{prop}\label{prop:C-nonuniform}
For each $k$, there is a universal plain machine $V$ such that there is no uniform way to compute a DNC$_k$ function from a $C_V$-compression function.
\end{prop}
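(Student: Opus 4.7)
\emph{Plan.} I would construct $V$ in stages, diagonalising against every Turing functional $\Gamma_e$ in turn while preserving universality of $V$. As in Proposition~\ref{prop:C-uniform}, I would reserve a positive-density portion of $\dom(V)$ (say, strings beginning with a fixed prefix) to copy a background universal machine $\widehat V$; the remaining portion of each length is available for diagonalisation and is partitioned into disjoint ``attack regions'' $R_e$, one per functional, living on disjoint fresh length scales.

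To handle $\Gamma_e$, I would reserve a fresh $n_e \notin \dom(J)$ as the input on which $\Gamma_e$ will be defeated. Inside $R_e$ I would choose fresh target strings $\tau_1,\dots,\tau_k \in 2^{<\omega}$ and a set $X \subseteq 2^{<\omega}$ of candidate codewords with $|X| > k$, and arrange the behaviour of $V$ on $R_e$ so that for each $k$-subset $Y \in [X]^k$ there is a $C_V$-compression function $F_Y$ whose restriction to $\{\tau_1,\dots,\tau_k\}$ is a fixed bijection onto $Y$ and which agrees with a canonical default compression (say, shortest $V$-description) everywhere else.

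Consider the colouring $c : [X]^k \to \{0,\dots,k-1\}$ given by $c(Y) = \Gamma_e^{F_Y}(n_e)$. If some $Y$ makes $\Gamma_e^{F_Y}(n_e)$ diverge or fall outside $\{0,\dots,k-1\}$, then $F_Y$ itself already witnesses that $\Gamma_e^{F_Y}$ is not DNC$_k$ at $n_e$, and we are done for this $e$. Otherwise $c$ is a genuine $k$-colouring of $[X]^k$, and Lemma~\ref{lem:DumbRT} furnishes some $i < k$ such that every $v \in X$ lies in a $k$-subset $Y$ with $c(Y) = i$. Using the recursion theorem I would then enumerate $J(n_e) = i$; any such $Y$ yields a $C_V$-compression function $F_Y$ with $\Gamma_e^{F_Y}(n_e) = i = J(n_e)$, so $\Gamma_e^{F_Y}$ fails to be DNC at $n_e$.

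\emph{The main obstacle} is to ensure that each $F_Y$ genuinely belongs to the $\Pi^0_1$ class of total $C_V$-compression functions for the \emph{final} machine $V$, given that $V$ is built dynamically and further descriptions are enumerated at later stages. Once we commit $F_Y(\tau_j) = y_j$ we must never subsequently let $V$ give $\tau_j$ a description shorter than $|y_j|$. This is handled by keeping each $\tau_j$ fresh at its stage, by placing the $R_e$ on disjoint length scales, and by restricting each $R_e$ to a bounded fraction of its length, mirroring the $\nicefrac{3}{4}$ accounting in Proposition~\ref{prop:C-uniform} so that universality of $V$ is preserved. The full ``for every $v \in X$'' conclusion of Lemma~\ref{lem:DumbRT}, rather than the weaker ``some $Y$ has colour $i$'', is what allows the chosen $Y$ to contain any specific element $v$ whose extendibility to a total $C_V$-compression function has already been secured by earlier commitments in the construction.
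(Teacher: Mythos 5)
The proposal takes a genuinely different high-level route from the paper (building $V$ dynamically by diagonalising against every $\Gamma_e$, rather than the paper's approach of fixing a simple $V$ once and for all), and that is fine in principle. However, there are concrete gaps in the details.

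\textbf{The colouring is not computable.} You define $c(Y)=\Gamma_e^{F_Y}(n_e)$, where each $F_Y$ is a total compression function agreeing ``everywhere else'' with the shortest-$V$-description map. That default map is Turing-equivalent to $\emptyset'$, so the oracles $F_Y$ are not computable, and neither is $c$. But you need to \emph{compute} the output colour $i$ in order to set $J(n_e)=i$ via the recursion theorem. The paper sidesteps this by never touching total functions: it works with finite $h$-bounded strings $\sigma$, and uses compactness of the $\Pi^0_1$ class $P$ of compression functions to effectively find a level $n$ at which $\Gamma(\sigma,e)$ converges for every $\sigma\in h^n$. The colouring is then on finite objects and is computable. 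Your proposal has no analogous use of compactness, so the partial computable process you invoke the recursion theorem on never produces an output.

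\textbf{The role of Lemma~\ref{lem:DumbRT} is incoherent in your framework.} You first say that for each $Y\in[X]^k$ there \emph{is} a $C_V$-compression function $F_Y$, in which case mere pigeonhole would suffice and the full conclusion of Lemma~\ref{lem:DumbRT} is wasted. Later you say the ``for every $v\in X$'' conclusion is needed because only certain $v$'s have ``extendibility \dots secured''; but then $c$ is not a genuine $k$-colouring on all of $[X]^k$ and the lemma does not apply. In the paper, Lemma~\ref{lem:DumbRT} earns its keep because of a specific combinatorial map $Q\mapsto\tau_Q$ (roughly, a pointwise-minimum string over the $k$-element set of $h$-bounded strings $Q$) with the crucial property: if \emph{any} $\sigma\in Q$ is extendible in $P$, then $\tau_Q$ is as well. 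That is what converts ``at least one string in $\widehat E$ is extendible, though we cannot tell which'' into ``$\tau_Q$ is extendible.'' Your framework, which holds $\tau_1,\dots,\tau_k$ fixed and permutes codewords in $Y$, has no corresponding monotonicity, so the lemma does not do the work you want it to.

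\textbf{Control over $C_V$ is not secured.} You must maintain $|y_j|\le C_V(\tau_j)$ for the final $V$, but $V$ copies a background universal $\widehat V$ over which you have no control: $\widehat V$ may later produce a short description of $\tau_j$. ``Placing $R_e$ on fresh length scales'' does not help, because the threat is $\widehat V$ compressing $\tau_j$, not your own regions.

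Finally, the paper's statement is sharper than your plan would deliver: Proposition~\ref{prop:C-nonuniform:restatement} shows that \emph{any} universal $V$ using at most $\nicefrac{1}{k}$ of the strings of each length already defeats every $\Gamma$, so the $V$ of Proposition~\ref{prop:C-nonuniform} is just $V(0^c\sigma)=U(\sigma)$ for $c\ge\log_2 k$, with no per-functional diagonalisation in the construction of $V$ at all.
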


In fact, we show the following. For a plain machine~$V$ and $k\in \omega$, we say that~$V$ \emph{uses at most $\nicefrac{1}{k}$ of the available strings of each length} if for all~$n$, There are at most $2^n/k$ many strings of length~$n$ in the domain of~$V$. 

\begin{prop}\label{prop:C-nonuniform:restatement}
Let $k\in \omega$. If~$V$ is a universal plain machine which uses at most $\nicefrac{1}{k}$ of the available strings of each length, then there is no uniform way to compute a DNC$_k$ function from a $C_V$-compression function.
\end{prop}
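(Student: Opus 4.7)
The plan is to argue by contradiction. Suppose $\Gamma$ is a Turing functional such that $\Gamma^F$ is DNC$_k$ for every $C_V$-compression function $F$; we will produce a $C_V$-compression function $F$ and an index $n$ with $\Gamma^F(n) = J(n)$, contradicting that $\Gamma^F$ is DNC$_k$. The argument combines a recursion-theoretic self-reference with Lemma~\ref{lem:DumbRT}, in the manner foreshadowed by the paragraph preceding that lemma.

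Fix a length $L$ (depending on the self-referential index $n$) and $k$ pairwise distinct target strings $\tau_1, \ldots, \tau_k$ of length $L$. Because $V$ is $1/k$-sparse, fewer than $2^L/k$ strings of length $L$ have $C_V$-complexity below $L$, so the set $Y \subseteq 2^L$ of strings $\sigma$ with $C_V(\sigma) \geq L$ satisfies $|Y| \geq 2^L(1 - 1/k)$, which is much larger than $k$ for large $L$. For each ordered $k$-subset $S = (\sigma_1, \ldots, \sigma_k)$ of $Y$, the partial assignment $\sigma_i \mapsto \tau_i$ extends, via a standard greedy procedure, to a full $C_V$-compression function $F_S$; the sparseness of $V$ leaves sufficiently many unused short strings to complete the extension.

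Colour each $k$-subset $S \subseteq Y$ by $c(S) = \Gamma^{F_S}(n) \in \{0, \ldots, k-1\}$, which is well-defined under the hypothesis on $\Gamma$. Applying Lemma~\ref{lem:DumbRT} to $c$ yields a colour $i^*$ such that every $\sigma \in Y$ lies in some $k$-subset of colour $i^*$. In particular, at least one such $k$-subset $S$ exists, so $F_S$ is a bona fide $C_V$-compression function with $\Gamma^{F_S}(n) = i^*$. Using the recursion theorem, select $n$ with $\phi_n(n) = i^*$; then $J(n) = i^* = \Gamma^{F_S}(n)$, yielding the desired contradiction.

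The main obstacle is that the colouring $c$ is not directly computable: both $Y$ and the canonical extensions $F_S$ depend on $C_V$, which is only upper semi-computable, so $c$ is a priori $\Delta^0_2$ and $i^*$ is not an effective function of $n$ via the standard recursion theorem. The plan to handle this is to run the machinery in stages, approximating $C_V$ from above, and to exploit the fact that the relevant portion of the construction---specifically, $\Gamma^{F_S}(n)$ on the finitely many oracle queries it makes---stabilises once the approximation has advanced far enough, with $L$ chosen generously. A limit-style recursion-theorem argument then extracts a computable $n$ with the desired fixed-point property. The $1/k$-sparseness of $V$ is crucial in two places: it supplies the combinatorial room to extend partial assignments to the $F_S$, and it guarantees that the lemma can be applied within $Y$ rather than all of $2^L$.
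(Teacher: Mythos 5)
Your proposal identifies the right ingredients (the $1/k$-sparseness, Lemma~\ref{lem:DumbRT}, the recursion theorem) and you correctly flag the effectiveness problem yourself, but the fix you sketch does not work, and in fact the way you set things up makes Lemma~\ref{lem:DumbRT} do no real work. I explain both points.

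\textbf{The effectiveness gap is fatal as stated.} You define $Y=\{\sigma\in 2^L : C_V(\sigma)\geq L\}$ and a ``canonical'' total compression function $F_S$ for each $S\in[Y]^k$; both objects depend on the true value of $C_V$, which is only co-c.e.\ approximable, so the map $n\mapsto i^*(n)$ is at best $\Delta^0_2$. The ordinary recursion theorem needs this map to be \emph{partial computable} in $n$. Your repair --- approximate $C_V$ in stages, wait for the relevant finite part of $\Gamma^{F_S}(n)$ to stabilise, invoke a ``limit-style'' recursion theorem --- does not rescue this: stabilisation is exactly what makes a function $\Delta^0_2$, not computable, and a fixed point for a $\Delta^0_2$ function $n\mapsto i^*(n)$ would give $\lim_s\varphi_{n,s}(n)=i^*(n)$, whereas the contradiction requires a genuine halting computation $J(n)\converge=i^*(n)$ so that $\Gamma^{F_S}(n)=J(n)$ violates DNC. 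You never say how a single converging value of $J$ is produced, and I do not see how it can be in this setup.

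\textbf{Lemma~\ref{lem:DumbRT} is doing nothing in your argument, which signals that the design is off.} After applying the lemma you only use the consequence ``at least one $k$-subset of colour $i^*$ exists,'' which is trivially true for any non-empty colouring and does not need the lemma at all. The reason the lemma is genuinely needed is this: to make the colouring \emph{computable}, one must replace the non-effective set $Y$ by something computable --- in the paper, the set $\widehat E$ of injective, $h$-bounded, suitably sparse \emph{finite} strings at a compactness-determined level $n$, checked against $\Gamma(\cdot,e)$ which has converged on all of $h^n$. Once you pass to a computable candidate set, you lose the luxury of knowing which of its members really are initial segments of $C_V$-compression functions; you only know that \emph{some} member is extendible. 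Lemma~\ref{lem:DumbRT} is then used in earnest: the output colour $i$ satisfies that \emph{every} $\sigma\in\widehat E$, in particular the unknown extendible one, lies in some $Q$ with $c(Q)=i$, and by the pointwise-minimum construction $\tau_Q$ inherits extendibility from $\sigma$. The ``for every $\sigma$'' half of the lemma, which your write-up discards, is precisely what closes the loop. So the two issues are entwined: your set-up achieves extendibility for free by paying with non-effectiveness, and the paper's set-up achieves effectiveness by giving up control over which finite string is extendible, recovering it via the lemma.

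To repair the proof you would need to (i) replace $Y$ and $F_S$ by computable finite objects at a level $n$ obtained by effective compactness from $\Gamma(\cdot,e)$; (ii) build, for each $k$-set $Q$ of candidate finite strings, a single finite string $\tau_Q$ that is no worse than each $\sigma\in Q$ (so extendibility propagates); and (iii) use the full conclusion of Lemma~\ref{lem:DumbRT} to guarantee the output colour is hit on an extendible $\tau_Q$. That is exactly what the paper's proof does.
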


Proposition~\ref{prop:C-nonuniform} follows by taking any universal plain machine~$U$, fixing some~$c \ge \log_2 k$, and letting $V(0^c\s) = U(\s)$ for all~$\s$. 

\begin{proof}[Proof of Proposition~\ref{prop:C-nonuniform:restatement}]
Let $P$ be the collection of $C_V$-compression functions. By a fixed computable numbering of all finite binary strings, we can view~$P$ as a $\Pi^0_1$ class in Baire space; as mentioned above,~$P$ is computably bounded. Fix a computable function $h\colon\omega\to\omega$ such that $P\subseteq h^\omega$. 
%Fix a computable tree $T\subseteq h^{<\w}$ such that $P$ is the set of paths through~$T$ (we write $P = [T]$). 
% We may assume that every $\s\in T$ is injective. 
By the assumption on~$V$, there is some $F\in P$ such that the range of~$F$ includes at most $\nicefrac{1}{k}$ of the strings of each length.

Now assume, for a contradiction, that $\Gamma$ computes a DNC$_k$ function from every $F\in P$. Without loss of generality, $\Gamma$ is $k$-valued. We may also make it total on~$h^\w$ by ensuring that it converges on any oracle not in~$P$.

We define a computable process that will output an $i<k$. The result of this process will be $J(e)$, for some $e$. By the recursion theorem, we may assume that we know $e$ in advance.\footnote{More formally, we define a partial computable function~$\psi$; by the recursion theorem, there is some~$e$ such that $J(e)= \psi(e)$.} By compactness, there is an $n\in\omega$ such that $\Gamma(\sigma,e)\downarrow$ for every $\sigma\in h^n$ (where recall that $h^n$ is the collection of $h$-bounded strings of length~$n$). Our goal is to output an $i<k$ such that $\Gamma(\sigma,e) = i$ for some $\sigma\in h^n$ that is extendible to an element of $P$. Of course, we cannot hope to effectively identify such a $\sigma$, but we will see that we \emph{can} effectively find such an $i$.

\smallskip

Define~$E$ to be the collection of strings $\s\in h^n$ which are injective; and let
\[
\widehat{E} = \set{\tau\in E}{\tau\text{ maps onto at most $\nicefrac{1}{k}$ of the strings of each length}}.
\]
By our assumption on~$V$, we know that there is a $\tau\in \widehat{E}$ that is extendible to an element of~$P$.

We define a map $Q\mapsto \tau_Q$ from $[\widehat{E}]^k$ to~$E$ as follows: given $Q\subseteq \widehat{E}$ of size~$k$, since each $\s\in Q$ maps to at most $\nicefrac{1}{k}$ of the strings of each length, we can let $\tau_Q$ map each $x<n$ to a string $\tau_Q(x)$ with $|\tau_Q(x)|\le |\s(x)|$ for all $\s\in Q$. That is, there is enough room in the range to permit all of the desired compression while keeping~$\tau_Q$ injective. Note that the minimality condition also implies that $\tau_Q\in h^n$, and so $\tau_Q\in E$. What is important is that if some $\s\in Q$ is exetendible to an element of~$P$, then so is~$\tau_Q$. 

Now define a colouring $c\colon[\widehat{E}]^k\to k$ as follows: for $Q\in[\widehat{E}]^k$, let $c(Q) = \Gamma(\tau_Q,e)$. Fix $i<k$ as in Lemma~\ref{lem:DumbRT} for the colouring~$c$; this is the output of our computable procedure, i.e., $J(e)=i$. Now fix $\s\in\widehat{E}$ extendible to an element of $P$ and any $Q\in[\widehat{E}]^k$ such that $\s\in Q$ and $c(\tau_Q) = i$. Then $\tau_Q$ is extendible to an element of $P$, but $\Gamma(\tau_Q,e) = i = J(e)$, which contradicts our choice of~$\Gamma$.
\end{proof}

Recall that for sets $P,R\subseteq \w^\w$, we write $P\le_s R$ (and say that $P$ is \emph{Medvedev reducible} to~$R$) if there is a Turing functional~$\Gamma$ such that for all $X\in R$, $\Gamma(X)$ is total and $\Gamma(X)\in P$: each element of~$R$ computes an element of~$P$, uniformly. In contrast, $P\le_w R$ ($P$ is \emph{Muchnik reducible} to~$R$) if every element of~$R$ computes an element of~$P$, but not necessarily uniformly. Jockush's result mentioned above shows that the classes DNC$_k$ are all Muchnik equivalent (their upward closures in the Turing degrees consist of the PA-complete oracles), but that for all~$k$, $\textup{DNC}_{k+1} <_s \textup{DNC}_k$. The class $\DNC_2$ is Medvedev-complete for computably bounded $\Pi^0_1$ classes. 

For a universal plain machine~$V$, let $\CF{V}$ be the collection of $C_V$-compression functions. Kjos-Hanssen, Merkle, and Stephan's Proposition~\ref{prop:KHMS} says that for every universal plain machine~$V$ there is some~$k$ such that $\DNC_k \le_s \CF{V}$; it follows that for all~$V$, each $\CF{V}$ is Muchnik equivalent to $\DNC_2$. Proposition~\ref{prop:C-uniform} states that for some universal~$V$, $\DNC_2\le_s \CF{V}$; Proposition~\ref{prop:C-nonuniform} says that for every~$k$ there is some~$V$ for which $\DNC_k\nle_s \CF{V}$. 

\smallskip

The proof of Proposition~\ref{prop:C-nonuniform:restatement} can be restricted above any extendible string~$\s$. This is relevant to the following. 

\begin{lem} \label{lem:general_prop_on_finitely_many_functionals}
	Let $k\in \omega$ and let~$P$ be a computably bounded $\Pi^0_1$ class. Suppose that for every $\s$ extendible on~$P$, $\DNC_k \nle_s P\cap [\s]$. Then there is no finite collection $\Gamma_1, \dots, \Gamma_m$ of functionals such that for all $X\in P$, $\Gamma_i(X)\in \DNC_k$ for some $i\le m$. 
\end{lem}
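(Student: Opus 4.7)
Suppose toward contradiction that $\Gamma_1,\dots,\Gamma_m$ is a finite family of Turing functionals such that, for every $X\in P$, some $\Gamma_i(X)\in\DNC_k$. The plan is to eliminate the functionals one at a time, building a descending chain $\emptyset = \s_0 \sub \s_1 \sub \dots \sub \s_m$ of extendible strings in the tree of $P$ (possibly augmented by $\PI$ side-conditions, as described below), such that at the end of stage $j$ the functional $\Gamma_j$ satisfies $\Gamma_j(X)\notin\DNC_k$ for every $X$ compatible with the current condition. Any $X\in P$ compatible with the condition at stage $m$ then contradicts the coverage assumption.

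For the inductive step at stage $j$, given extendible $\s_{j-1}$, the hypothesis $\DNC_k\nle_s P\cap[\s_{j-1}]$ is applied to $\Gamma_j$, split into two cases. Suppose first that $\Gamma_j$ is total on $P\cap[\s_{j-1}]$; then compactness of this class yields a uniform bound on the use of $\Gamma_j$, so the set $\{X\in P\cap[\s_{j-1}] : \Gamma_j(X)\in\DNC_k\}$ is $\PI$. By the hypothesis, this class is a proper subset of $P\cap[\s_{j-1}]$, so some $X$ exhibits a ``bad'' convergent value --- some $e$ with $\Gamma_j(X)(e)\ge k$ or $\Gamma_j(X)(e) = J(e)$. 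Taking $\s_j$ to be $X$ restricted to (at least) the use of this computation produces an extendible extension of $\s_{j-1}$ on which $\Gamma_j(X')(e)$ takes the same bad value for every $X'\in P\cap[\s_j]$. This ``freeze and localize'' step is precisely the argument of Proposition~\ref{prop:C-nonuniform:restatement} applied above $\s_{j-1}$, as the remark preceding the lemma anticipates.

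In the remaining case, $\Gamma_j$ is not total on $P\cap[\s_{j-1}]$: there exist $X\in P\cap[\s_{j-1}]$ and $e_0$ with $\Gamma_j(X)(e_0)\diverge$, and the $\PI$ subclass $Q_j := \{Y\in P\cap[\s_{j-1}] : \Gamma_j(Y)(e_0)\diverge\}$ is nonempty. Since $\Gamma_j$ fails on every $Y\in Q_j$ by non-totality, I replace the desired clopen extension with the $\PI$ side-condition ``$\Gamma_j(\cdot)(e_0)\diverge$'' and carry the induction forward on $Q_j$. The main obstacle is the bookkeeping between clopen conditions (directly handled by the hypothesis) and $\PI$ side-conditions inherited from earlier non-total stages: at each subsequent step one has to verify that a reduction of $\DNC_k$ from the refined class could be assembled --- using the recorded side-conditions --- into a reduction from a clopen $P\cap[\s']$, violating the hypothesis. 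Once this is in place, iterating through all $m$ functionals yields an $X\in P$ with $\Gamma_i(X)\notin\DNC_k$ for every $i\le m$, the desired contradiction.
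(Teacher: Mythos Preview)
Your approach has a genuine gap, and it is precisely the ``bookkeeping'' you acknowledge but do not carry out. After a Case~2 step you replace the clopen condition $P\cap[\s_{j-1}]$ by the proper $\Pi^0_1$ subclass $Q_j=\{Y\in P\cap[\s_{j-1}]:\Gamma_j(Y)(e_0)\diverge\}$. At the next stage you need to know that $\Gamma_{j+1}$ does not Medvedev-reduce $\DNC_k$ from $Q_j$, but the hypothesis of the lemma speaks only about clopen restrictions $P\cap[\s]$. Your suggested repair---assembling a reduction from $P\cap[\s']$ out of a hypothetical reduction from $Q_j$ together with the side-conditions---cannot work: on $P\cap[\s']\smallsetminus Q_j$ all you know is that $\Gamma_j(X)(e_0)\converge$, and this single convergent value gives you no $\DNC_k$ function to output. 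Indeed, any $\Pi^0_1$ class whatsoever can be written as $\{Y:\Gamma(Y)(0)\diverge\}$ for a suitable functional~$\Gamma$, so there is no hope of transferring the clopen hypothesis to such side-conditions in general.

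The paper avoids this difficulty by refusing to process the functionals in a fixed order. It defines a single functional $\Theta(X,e)=\Gamma_i(X,e)$ for the first~$i$ seen to converge, and applies the (clopen) hypothesis to~$\Theta$. If~$\Theta$ is not total on $P\cap[\s]$, then at the witnessing $X,e$ \emph{every} $\Gamma_i(X,e)$ diverges, so no $\Gamma_i(X)$ is total and we are finished outright. If~$\Theta$ is total, the hypothesis yields an extendible $\tau\succeq\s$ and~$e$ with $\Theta(\tau,e)=J(e)$; this kills one $\Gamma_i$ (whichever one converged first) while leaving the condition clopen, namely $P\cap[\tau]$, so the induction on~$m$ goes through. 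The point is that the $\Theta$ trick lets the argument choose dynamically which functional to eliminate, and this is exactly what keeps the conditions clopen and the hypothesis applicable.
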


That is, not only do elements of~$P$ not compute $\DNC_k$ functions uniformly, but no finite collection of functionals is sufficient for $\DNC_k \le_w P$. We remark that the proof of Lemma~\ref{lem:general_prop_on_finitely_many_functionals} only uses the fact that $\DNC_k$ is a $\Pi^0_1$ class which is determined pointwise, entry by entry; thus, for example, it also applies to separating classes. 

\begin{proof}
	For brevity, in this proof, for $m\ge 1$ and $\Pi^0_1$ classes $P$ and~$Q$, write $Q\le_m P$ if there is a collection $\Gamma_1,\dots, \Gamma_m$ of $m$-many functionals which together reduce~$Q$ to~$P$, that is, for all $X\in P$, $\Gamma_i(X)\in Q$ for some $i\le m$. 

	By induction on~$m$, we show that for all~$\s$ which is extendible on~$P$, $\DNC_k \nle_m P\cap [\s]$. The case $m=1$ is the assumption of the proposition. 

	Let $m>1$ and suppose that this has been proved for $m-1$. Let $\Gamma_1,\dots, \Gamma_m$ be a collection of $m$-many functionals. Define a functional~$\Theta$ as follows: for all~$X$ and~$e$, $\Theta(X,e) = \Gamma_i(X,e)$ for the first~$i$ for which we see the convergence (if there is such). Let~$\s$ be extendible on~$P$. By assumption, we know that~$\Theta$ cannot witness that $\DNC_k\le_1 P\cap [\s]$. If $\Theta$ is not total on $P\cap [\s]$ then we are done. Otherwise, there is some $\tau\succeq \s$, extendible on~$P$, and some~$e$, such that $\Theta(\tau,e)\converge = J(e)$. There is some~$i$ such that $\Theta(\tau,e) = \Gamma_i(\tau,e)$. Now apply the induction hypothesis to the collection of functionals $\{\Gamma_j\,:\, j\le m, j\ne i \}$ and~$\tau$ to see that this collection cannot witness $\DNC_k \le_{m-1} P\cap [\tau]$; it follows that the original collection $\Gamma_1,\dots, \Gamma_m$ cannot witness $\DNC_k \le_m P\cap [\s]$ either. 
\end{proof}

As mentoned, the proof of Proposition~\ref{prop:C-nonuniform:restatement} gives the assumption of Lemma~\ref{lem:general_prop_on_finitely_many_functionals}, and so we get:

\begin{prop}
For each $k$, there is a universal plain machine $V$ such that there is no finite collection of functionals $\Gamma_1, \dots, \Gamma_m$ so that if $F$ is a $C_V$-compression function, then at least one of $\Gamma_1(F), \dots, \Gamma_m(F)$ is a $\DNC_k$ function.
\end{prop}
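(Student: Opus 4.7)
My plan is to derive this proposition directly from Proposition~\ref{prop:C-nonuniform:restatement} and Lemma~\ref{lem:general_prop_on_finitely_many_functionals}, as the sentence preceding the statement suggests. Fix $k$, and let $V$ be any universal plain machine that uses at most $\nicefrac{1}{k}$ of the available strings of each length, for example $V(0^{c}\s) = U(\s)$ for a fixed universal $U$ and $c \ge \log_2 k$. Set $P = \CF{V}$; this is a computably bounded $\Pi^0_1$ class. To apply Lemma~\ref{lem:general_prop_on_finitely_many_functionals}, the key thing to verify is that its hypothesis holds for this $P$: for every $\s$ extendible in $P$, $\DNC_k \nle_s P\cap [\s]$.

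The point is that the proof of Proposition~\ref{prop:C-nonuniform:restatement} localizes above any extendible string. First I would fix such a $\s$ and an arbitrary functional $\Gamma$ and assume, for contradiction, that $\Gamma$ computes a $\DNC_k$ function from every $F\in P\cap [\s]$. Then I would rerun the original construction, but restricting the set $E$ to injective strings in $h^n$ that extend $\s$, for some sufficiently large $n > |\s|$ (chosen by recursion-theorem/compactness as in the original argument so that $\Gamma(\tau,e)\converge$ for all such $\tau$). The sparse-image subset $\widehat{E}$ is defined exactly as before, and it is still nonempty with an extendible element: any $F \in P\cap [\s]$ has, by the assumption on $V$, range meeting every length in at most $\nicefrac{1}{k}$ of the available strings, so $F\rest n \in \widehat{E}$.

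Next I would adapt the combining map $Q \mapsto \tau_Q$. For $Q \in [\widehat{E}]^k$, set $\tau_Q(x) = \s(x)$ for $x < |\s|$ (all members of $Q$ already agree with $\s$ on this segment, so this preserves the compression bound), and for $|\s| \le x < n$ pick $\tau_Q(x)$ of length at most $\min_{\tau\in Q}|\tau(x)|$, injectively and avoiding all previously chosen values, exactly as in the original proof. The sparseness condition on each $\tau \in Q$ leaves enough room to do this. As before, if any member of $Q$ is extendible in $P$ then so is $\tau_Q$, because the length bounds on $\tau_Q$ come from strings that are already $C_V$-compression functions below $n$. Applying Lemma~\ref{lem:DumbRT} to the colouring $c(Q) = \Gamma(\tau_Q,e)$ and using the recursion theorem to arrange $J(e) = i$ for the guaranteed colour $i$ produces an extendible $\tau_Q \in h^n$ with $\Gamma(\tau_Q,e) = J(e)$, contradicting that $\Gamma$ sends every $F \in P\cap[\s]$ to a $\DNC_k$ function.

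With the hypothesis of Lemma~\ref{lem:general_prop_on_finitely_many_functionals} thus established, the lemma yields the proposition. The main obstacle is verifying that the localization above $\s$ genuinely preserves both the injectivity/compression bookkeeping for $\tau_Q$ and the inheritance of extendibility; once one writes out that the positions $x < |\s|$ are forced and the positions $|\s| \le x < n$ have enough room because each $\tau \in Q \subseteq \widehat{E}$ is sparse, everything else is a direct transcription of the original argument.
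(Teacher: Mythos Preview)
Your overall approach is exactly the paper's: apply Lemma~\ref{lem:general_prop_on_finitely_many_functionals} to $P=\CF{V}$, after verifying its hypothesis by localizing the proof of Proposition~\ref{prop:C-nonuniform:restatement} above an arbitrary extendible~$\s$. The paper says no more than this, so in that sense you are on target.

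However, your justification for why $\widehat{E}$ (restricted above $\s$) contains an extendible element is wrong. You write that ``any $F \in P\cap [\s]$ has, by the assumption on $V$, range meeting every length in at most $\nicefrac{1}{k}$ of the available strings''. This is false. The assumption on~$V$ bounds the size of $\dom V$, and hence the number of~$\sigma$ with $C_V(\sigma)\le m$; it does \emph{not} bound the range of an arbitrary $C_V$-compression function. For a concrete counterexample with $k=2$ and $V(0\tau)=U(\tau)$: if $\sigma_1,\dots,\sigma_4$ all have $C_V(\sigma_i)=3$, a compression function may send them to the four strings of length~$2$, so its range meets all $2^2$ strings of that length, not at most $2^2/2$. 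In the original proof one uses a \emph{specific} sparse $F\in P$ (one whose range lies in $\dom V$), not an arbitrary one; in the localized version you must produce such a sparse $F$ that also extends~$\s$. If $\s$ itself already hits more than $2^m/k$ strings of some length~$m$ (which is perfectly possible for an extendible $\s$), then with the original definition of~$\widehat{E}$ no extension of~$\s$ lies in~$\widehat{E}$ at all.

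The combining step you describe is fine (and in fact, if one carefully counts, the $\s$-values cancel on both sides), so the gap is solely in the existence of an extendible member of~$\widehat{E}$. One fix is to modify the sparseness condition to account for the finitely many values forced by~$\s$: require that on $[|\s|,n)$ the string uses at most a $\nicefrac{1}{k}$ fraction of the strings of each length \emph{not already used by~$\s$}. Then the canonical sparse compression function, altered on $[0,|\s|)$ to agree with~$\s$ and adjusted on the at most~$|\s|$ positions where it would collide with $\range\s$, yields an extendible element, and the combining argument still goes through. This is presumably what the paper has in mind when it asserts the localization without detail, but your write-up as it stands does not supply it.
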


%%%%%%%%
\subsection{Majorizing the optimal c.e.\ supermartingale}
%%%%%%%%

The case of martingales that majorize the optimal c.e.\ supermartingale is somewhat different from that of $C$-compression functions. Although each such martingale has PA degree, the proof has an unusual case breakdown that precludes uniformity. We will see in Proposition~\ref{prop:majorizing-nonuniform} that this nonuniformity is necessary: for all $k$, there is no uniform way to compute a $\DNC_k$ function from a martingale majorizing the optimal c.e.\ supermartingale. 
%In fact, we will show that infinitely many reductions are necessary. 
\noam{Removed ``In fact, we will show that infinitely many reductions are necessary.''}

% In the rest of this section, we seamlessly move between a martingale~$M$ and the associated Borel measure~$\mu$ determined by $\mu(\s) = 2^{-|\s|} M(\s)$. A martingale~$N$ majorizes~$M$ if and only if the associated measure $\mu_N$ majorizes~$\mu_M$.

\begin{prop}\label{prop:mdom}
There is an (atomless) c.e.\ martingale $M$ such that every martingale majorizing $M$ has PA degree.	
\end{prop}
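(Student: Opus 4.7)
The plan is to construct $M$ by encoding a pair of computably inseparable c.e.\ sets $A,B$---for instance, $A = \{e : J(e)\converge = 0\}$ and $B = \{e : J(e)\converge = 1\}$---into the mass distribution of $M$, so that any martingale $N \geq M$ yields an $N$-computable separator of $A$ and $B$. Since every such separator has PA degree, this will imply that $N$ has PA degree. The hope is that the construction will naturally be atomless because mass is spread thinly across many disjoint cones.

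For the construction, I would fix an antichain of nodes $\gamma_e \in 2^{<\omega}$, say $\gamma_e = 0^e 1$, with children $\alpha_e = \gamma_e\conc 0$ and $\beta_e = \gamma_e\conc 1$, and a summable sequence of weights $c_e > 0$ (e.g.\ $c_e = 2^{-e}$). For each $e$, let $u_e$ be the elementary martingale concentrated on the cone above $\alpha_e$: $u_e(\s) = c_e$ for $\s \succeq \alpha_e$, $u_e(\s) = 0$ for $\s$ incomparable with $\alpha_e$, and $u_e(\s) = c_e \cdot 2^{|\s| - |\alpha_e|}$ for $\s \preceq \alpha_e$. Define $v_e$ analogously for $\beta_e$. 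Then
\[
M \;=\; \sum_{e \in A} u_e \;+\; \sum_{e \in B} v_e
\]
is a c.e.\ martingale (enumerated stage by stage as $e$'s appear in $A_s$ or $B_s$), with $M(\emptyset) \le 2 \sum_e c_e \cdot 2^{-|\alpha_e|} < \infty$. Each $u_e, v_e$ is a scaled Lebesgue measure on its cone, the cones are pairwise disjoint with summable total measure, and so a short calculation shows $M$ is atomless.

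For the decoding, observe that $N \ge M$ forces $N(\alpha_e) \ge c_e$ whenever $e \in A$ and $N(\beta_e) \ge c_e$ whenever $e \in B$. The naïve candidate $C_0 = \{e : N(\alpha_e) \ge c_e\}$ contains $A$ but may intersect $B$ when $N$ carries extra mass above $\alpha_e$ for some $e \in B$. To produce a genuine separator, I would compute $d = N(\emptyset)$ from $N$ and then perform a case split on $d$: in each case, using an adapted threshold (and, if needed, several parallel witness nodes $\alpha_e^i$ per $e$ arranged so that the per-$e$ cost grows), the global mass bound $\sum_e 2^{-|\alpha_e|} N(\alpha_e) \le d$ would force the set of $e$'s on which $N$ cheats (i.e.\ $e \notin A$ with $N(\alpha_e) \ge c_e$) to be finite and bounded by a function of $d$, so that one can obtain a correct separator by an $N$-computable correction of $C_0$.

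The main obstacle is the combinatorial tension that, when $A$ and $B$ are infinite, a single-witness encoding lets $N$ cheat on infinitely many $e$'s without violating the total-mass constraint (the per-$e$ cost $c_e \cdot 2^{-|\alpha_e|}$ is summable by construction). Overcoming this will force the decoding algorithm to branch---either choosing the multiplicity of witnesses adaptively in $d = N(\emptyset)$, or selecting a threshold depending on $d$---and so the reduction cannot be made uniform in $N$. This intrinsic nonuniformity is the feature the introduction singles out, and it is precisely what Proposition~\ref{prop:majorizing-nonuniform} will later formalize.
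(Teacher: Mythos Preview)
Your construction of $M$ is fine, but the decoding step has a genuine gap that your parenthetical fixes do not close. With $c_e = 2^{-e}$ and $|\alpha_e| = e+2$, the root-cost for an adversarial $N$ to ``cheat'' at index $e$ (i.e., to make $N(\alpha_e)\ge c_e$ even though $e\notin A$) is $c_e\,2^{-|\alpha_e|} = 2^{-2e-2}$. This is summable, so for any bound $d$ on $N(\emptyset)$ the adversary can cheat on \emph{every} $e$ simultaneously. Knowing $d$ therefore gives no bound on the cheating set, and no finite correction of $C_0$ yields a separator. Your suggested ``several parallel witnesses'' does not help: if each $e$ requires $m_e$ witnesses, then the cost for $M$ to honestly encode $e$ and the cost for $N$ to cheat at $e$ are of the same order $m_e\,c_e\,2^{-|\alpha_e|}$; making the latter non-summable makes the former non-summable too, so $M(\emptyset)=\infty$. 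In short, any static encoding of this shape lets the adversary add a bounded atomless excess $N-M$ that scrambles the decoder on infinitely many indices.

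The paper's argument avoids this by \emph{not} encoding a fixed pair of inseparable sets. Instead, whenever $n$ enters $\emptyset'$ at stage $s$, capital $2^{-n}$ is pushed to a string of length $s$ that currently \emph{looks} $\DNC_2$. The case split is then intrinsic to $N$ rather than to a threshold: either $N$ has a $\DNC_2$ atom (and every martingale computes its atoms, so $N$ has PA degree directly), or $N$ has no such atom, in which case for each $n$ one can $N$-computably find a stage $s$ by which $N(\sigma)<2^{s-n}$ for every length-$s$ string still looking $\DNC_2$; this $s$ dominates the settling time of $\emptyset'$, so $N\ge_\Tur\emptyset'$. The key point is that the ``cheating'' option for $N$ (concentrating mass on $\DNC_2$-looking strings) is itself PA-complete, so both branches of the dichotomy succeed. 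Your encoding lacks any analogue of this: putting extra mass over $\alpha_e$ costs the adversary nothing structurally.
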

\begin{proof}
Define a c.e.\ martingale~$M$ as follows. If $n$ enters $\emptyset'$ at stage $s$, find a string $\sigma\in 2^s$ that looks DNC$_2$ at stage $s$, add $2^{-n}$ much capital to the root and push it up to~$\s$.\footnote{In other words, for $\tau\preceq\sigma$, we let $M_{s+1}(\tau)-M_s(\tau) = 2^{|\tau|-n}$; to preserve the martingale property, for $\tau\succ\sigma$, we let $M_{s+1}(\tau)-M_s(\tau) = 2^{|\sigma|-n}$.}

Now let $N$ be a martingale that majorizes $M$.

\emph{Case 1.} $N$ has a DNC$_2$ atom.\footnote{That is, the associated measure $\mu(\s)= 2^{-|\s|}M(\s)$ has a $\DNC_2$ atom.} A martingale computes all of its atoms, so in this case, $N$ has PA degree.

\emph{Case 2.} $N$ has no DNC$_2$ atoms. Then for each~$n$, there is a stage $f(n)=s$ such that for all strings~$\s$ of length~$s$ that still look DNC$_2$ at stage $s$ we have $N(\s)\le 2^{s-n}$. By construction, $f$, which is~$N$-computable, majorizes the settling time function for~$\emptyset'$, so~$N$ has PA degree.
\end{proof}

Of course, the optimal c.e.\ supermartingale majorizes $M$, up to a multiplicative constant, so we have the desired result:

\begin{cor} \label{cor:majorizing_optimal_supermartingale}
Every martingale that majorizes the optimal c.e.\ supermartingale has PA degree.
\end{cor}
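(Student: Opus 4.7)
The plan is to deduce this essentially immediately from Proposition~\ref{prop:mdom} together with the universality property defining the optimal c.e.\ supermartingale. Let~$M$ be the atomless c.e.\ martingale given by Proposition~\ref{prop:mdom}, so that every martingale majorizing~$M$ has PA degree. Since~$M$ is itself a c.e.\ supermartingale, optimality of~$m$ supplies a constant $c>0$ such that $cm(\sigma)\ge M(\sigma)$ for every~$\sigma\in 2^{<\omega}$. Because~$m$ is nonnegative, any $c'\ge c$ also works, so we may in fact take~$c$ to be a positive rational.

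Now suppose~$N$ is any martingale majorizing~$m$. Then $cN$ is again a martingale, and the chain $cN(\sigma)\ge cm(\sigma)\ge M(\sigma)$ holds for every~$\sigma$, so $cN$ majorizes~$M$. Proposition~\ref{prop:mdom} then yields that $cN$ has PA degree. Since~$c$ is a fixed rational constant, $cN\equiv_\Tur N$, and so~$N$ itself has PA degree, as required.

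There is no real obstacle here; the only point worth flagging is the need to choose the constant~$c$ so that rescaling does not affect the Turing degree of~$N$, which is why we round the real constant produced by optimality up to a rational. The genuine content has already been absorbed into Proposition~\ref{prop:mdom}.
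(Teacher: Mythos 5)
Your proof is correct and matches the paper's, which disposes of the corollary in a single sentence: the optimal c.e.\ supermartingale majorizes $M$ up to a multiplicative constant, so a martingale majorizing $m$ can be rescaled to majorize $M$ and Proposition~\ref{prop:mdom} applies. Your extra care in rounding the constant up to a rational so that rescaling preserves computability is a reasonable (if minor) point, consonant with the concerns about names raised in Remark~\ref{rmk:continuous_degrees}.
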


\begin{remark} \label{rmk:continuous_degrees}
	The statements of Proposition~\ref{prop:mdom} and Corollary~\ref{cor:majorizing_optimal_supermartingale} are imprecise. The reason is that objects such as martingales (and below, real-valued functions on the rationals) do not necessarily have Turing degree. Rather, they have a continuous degree (\cite{Miller:Continuous}). Continuous reducibility uses the notion of a \emph{name} of an object. For example, a name of a martingale~$N$ is a function taking a string~$\s$ and a positive rational number~$\epsilon$ to a rational number~$q$ satisfying $|M(\s)-q|<\epsilon$. If~$x$ and~$y$ are objects which have continuous degree (points in computable metric spaces), then $x\le_r y$ if every name for~$y$ computes a name for~$x$. This extends Turing reducibility. The proof of Proposition~\ref{prop:mdom} shows that if~$N$ is a martingale dominating~$M$ then the continuous degree of~$N$ lies above a PA-complete Turing degree. 

	Below, however, we will need to relax this notion of reducibility. Corollary~\ref{cor:majorizing_optimal_supermartingale} is really intended as a statement about Turing degrees: if a set~$X$ can compute a martingale~$N$ dominating~$m$, then~$X$ is PA-complete. To show that it suffices to show that if~$N$ dominates~$m$ then every name for~$N$ computes a $\DNC_2$ function; it is not required that every name for~$N$ computes the \emph{same} $\DNC_2$ function. This is important when we consider uniformity, in particular Weihrauch reducibility, below. 
\end{remark}

% In the lagnuage of reducibilities, we see that $\DNC_2$ and the collection of martingales dominating the optimal c.e.\ supermartingale are Muchnik equivalent. We now show that they are not Medvedev equivalent:

\begin{prop}\label{prop:majorizing-nonuniform}
It is not possible to uniformly compute a $\DNC_k$ function from a martingale majorizing the optimal c.e.\ supermartingale. 
% Moreover, no finite collection of functionals is sufficient.
\end{prop}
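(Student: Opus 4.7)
Suppose for contradiction that a Turing functional $\Gamma$ computes a $\DNC_k$ function from every name of a martingale majorizing~$m$. Since~$m$ majorizes the atomless c.e.\ martingale~$M$ from Proposition~\ref{prop:mdom} up to a multiplicative constant, we may equivalently assume that $\Gamma(\nu,\cdot) \in \DNC_k$ whenever~$\nu$ names a martingale $N \ge M$ with $N(\lambda) \le C$ for some fixed constant~$C$. Let~$P$ denote the resulting $\Pi^0_1$ class of names; the martingale identity yields $N(\s) \le C \cdot 2^{|\s|}$, so~$P$ is effectively compact, and it is nonempty (a name of~$M$ itself lies in~$P$).

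We adapt the combinatorial template of the proof of Proposition~\ref{prop:C-nonuniform:restatement}. Fix an index~$e$ via the recursion theorem. By compactness, there is a finite level~$n$ such that $\Gamma(\s,e)\converge\in\{0,\dots,k-1\}$ for every $P$-extendible name-prefix~$\s$ at level~$n$. Let~$E$ be the (decidable) collection of name-prefixes at level~$n$ that are consistent with describing some martingale $\ge M_s$ of capital $\le C$ at a large current stage~$s$, and carve out $\widehat E \sub E$ by insisting on a uniform slack above~$M$, arranged so that every $\s\in\widehat E$ is outright $P$-extendible. For each $Q\in[\widehat E]^k$ form the convex combination $\tau_Q = k^{-1}\sum_{\s\in Q}\s$. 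Closure of martingales under averaging, combined with the slack built into~$\widehat E$, ensures that $\tau_Q \in E$ and that $\tau_Q$ is $P$-extendible whenever at least one $\s\in Q$ is: we witness $\tau_Q$-extendibility by averaging a genuine extender $N_\s\in P$ of the extendible~$\s$ with $k-1$ copies of a fixed cheap majorizer supplied by the margin.

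Color $c(Q) = \Gamma(\tau_Q,e) \in \{0,\dots,k-1\}$ and apply Lemma~\ref{lem:DumbRT}: there is a color~$i$ such that every $\s \in \widehat E$ lies in some $Q \in [\widehat E]^k$ with $c(Q) = i$. The recursion-theoretic choice of~$e$ lets us declare $J(e) = i$. Picking some $\s \in \widehat E$ that is genuinely $P$-extendible (for instance, a name for a slightly inflated copy of~$M$) and some $Q \ni \s$ with $c(Q) = i$, the amalgamation~$\tau_Q$ is $P$-extendible, and any $F \in P$ extending~$\tau_Q$ satisfies $\Gamma(F,e) = i = J(e)$, contradicting that $\Gamma(F,\cdot) \in \DNC_k$.

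The hard part will be the combinatorial step of choosing~$\widehat E$ and~$\tau_Q$: extendibility in~$P$ depends on the c.e.\ object~$M$ continuing to grow, so the slack baked into~$\widehat E$ must absorb all future growth of~$M$ at or below level~$n$, while~$\widehat E$ must remain rich enough for Lemma~\ref{lem:DumbRT} to yield a useful color. The algebraic reason the scheme is viable for martingales is closure under convex combinations; the explicit slack is the workaround for~$M$ being only c.e.\ rather than computable.
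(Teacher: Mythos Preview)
Your overall strategy---recursion theorem plus compactness plus Lemma~\ref{lem:DumbRT}---matches the paper's. The gap is in the amalgamation step. You form $\tau_Q = k^{-1}\sum_{\sigma\in Q}\sigma$ and then propose to witness that $\tau_Q$ extends to a martingale $\ge M$ by ``averaging a genuine extender $N_\sigma$ with $k-1$ copies of a fixed cheap majorizer''. But that average does not extend $\tau_Q$: at level~$n$ it equals $\tfrac{1}{k}\sigma + \tfrac{k-1}{k}N^*|_n$, whereas $\tau_Q$ at level~$n$ is $\tfrac{1}{k}\sum_{\sigma'\in Q}\sigma'$, and the other $\sigma'\in Q$ bear no relation to the fixed~$N^*$. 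If instead you extend each $\sigma_j\in Q$ separately and then average, the result does extend $\tau_Q$, but it only majorizes $\tfrac{1}{k}M$ unless \emph{every} $\sigma_j$ extends to something $\ge M$. Your ``uniform slack above~$M$'' cannot guarantee this: being $\ge M|_n$ is a genuine $\Pi^0_1$ condition (since $M$ is only c.e.), and no computable slack over a stage~$M_s$ absorbs the future growth of~$M$ at levels $\le n$, which can be as large as~$2^n$. So $\widehat E$ cannot be simultaneously decidable and consist only of $P$-extendible prefixes.

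The paper's fix is to replace convex combinations by \emph{sums}: set $\tau_Q = \sum_{\sigma\in Q}\sigma$. Then if one $\sigma\in Q$ extends to $N\ge m$ and the remaining $\sigma_j$ extend to arbitrary nonnegative martingales, the sum still majorizes~$m$. The price is that the sum has initial capital up to~$k$, so the ambient $\Pi^0_1$ class~$P$ is taken to be all martingales with capital $\le k$ (with no majorization requirement), while $\widehat E$ consists of level-$(n+c)$ partial martingales of capital $\le 1$---a decidable condition making no reference to~$M$ at all. Some $\sigma\in\widehat E$ (e.g., a name for~$m$ itself) is extendible to a majorizer of~$m$, and by the sum trick so is every~$\tau_Q$ containing it; Lemma~\ref{lem:DumbRT} then finishes exactly as you outlined.
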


More precisely, it is not possible to uniformly compute a $\DNC_k$ function from a name for such a martingale, even if the reduction procedure does not promise to compute the same $\DNC_k$ function from all names for the same martingale. In other words, $\DNC_k$ is not Medvedev below the collection of names for martingales majorizing~$m$.

\noam{The following proof is coarse... but I don't see how to improve it. Once you fix the initial capital (even say within a small band of possible values), we can't play the game above of taking pointwise maximum values, as we lose the martingale property. The maximum of two martingales that distribute their capital in different regions will require perhaps double the initial capital.}
\andre{looks fine to me. Perhaps call it a sketch of proof? "following the rest of the proof" could mean a number of things}
\noam{But in this case, it is really just following the rest of the proof, verbatim.}

\begin{proof}
We give a variant of the proof of Proposition~\ref{prop:C-nonuniform:restatement}. Fix $k\in \omega$. Let~$\Gamma$ be a functional. We may assume that the initial capital of the optimal supermartingale is bounded by~1. Let~$P$ be the collection of all martingales with initial capital $\le k$. We can code~$P$ as a computably bounded $\Pi^0_1$ class in Baire space as follows: for each~$n$, the value $f_M(n)$ of the function coding~$M$ codes, for each binary string~$\s$ of length at most~$n$, one of the dyadic closed intervals $[k/2^n, (k+1)/2^n]$ (for integer $k$ between~0 and $k2^{n+|\s|}$) containing the value of~$M(\s)$. Let~$P_n$ be the collection of strings of length~$n$ coding initial segments of martingales in this way. 

Let~$\Gamma$ be a $k$-valued functional, and suppose that $\Gamma(M)$ is total for all $M\in P$. As above the recursion theorem gives us some~$e$ for which we can define $J(e)$. By effective compactness, there is some~$n$ such that $\Gamma(\tau,e)\converge$ for every string $\s\in P_n$. Let $c\ge \log_2 k$; we let $\widehat{E}$ be the collection of $\s\in P_{n+c}$ which code a martingale with initial capital $\le 1$; we know that some $\s\in \widehat{E}$ is extendible on~$P$. For $Q\in [\widehat{E}]^k$ we let $\tau_Q\in P_n$ be the (string coding) the sum of the martinagles (with codes) in~$Q$. Again, if some $\s\in Q$ is extendible in~$P$, then so is~$\tau_Q$. The proof then follows the rest of the proof of Proposition~\ref{prop:C-nonuniform:restatement}.
\end{proof}

We remark that the proof of Proposition~\ref{prop:majorizing-nonuniform} does not give the assumption of Lemma~\ref{lem:general_prop_on_finitely_many_functionals}, as adding martingales implies adding their initial capital. We thus ask the following questions for any $k\ge 2$:
\begin{itemize}
	\item Is there a finite collection of functionals~$\Gamma_i$ such that for every martingale~$M$ majorizing~$m$, $\Gamma_i(M) \in \DNC_k$ for some~$i$?
	\item Is there a uniform way to compute a $\DNC_k$ function from a martingale~$M$ majorizing~$m$ whose initial capital is bounded by~1?
\end{itemize}
\andre{perhaps state that we leave this open, rather than making it an open question? I would be surprised it someone works on this} \noam{Not sure what's the difference?}

%%%%%%%%
\subsection{Jordan decomposition on the rationals}
\label{subsec:Jordan}
%%%%%%%%

Given a computable function $f\colon[0,1]\to\R$ of bounded variation, we want to find nondecreasing functions $g,h\colon I_\Q\to\R$ such that $f\uh I_\Q = g-h$, where $I_\Q = [0,1]\cap\Q$. Brattka et al.~\cite{BMN:16} observed that this can be done with a PA degree. Our goal below is to show that finding a Jordan decomposition of $f$ on the rationals is equivalent to finding a martingale that majorizes a related atomless c.e.\ martingale.

\noam{I thought it is better to use Weihrauch. It makes things much clearer to me. It was also implicitly used in what was Observation 2.5. It makes things more precise. For example, I'm not exactly sure what ``compute a DNCk function from a uniform solution to the Jordan decomposition problem on IQ''  means.}

% \andre{If so we should say how it relates to Medvedev reducibility, to keep it coherent with previous results such as Jockusch's on $DNC_k$. The advantage of Medvedev was that it doesn't need names, but I'm happy if we look at this from  a new viewpoint}

% \noam{I tried this but couldn't see how to do this in a reasonable way. The point is that compression functions do have Turing degree, so makes sense to talk about their Medvedev degree. Martingales and real-valued functions do not have Turing degree, so if you want to discuss Medvedev degrees you *have* to use names. This is not so satisfying. This is why I kept the statement of prop:majorizing-nonuniform as is, rather than talking about the Medvedev degree of names of martingales majorizing the optimal c.e. supermartingale. I added a sentence before its proof. For Jordan decomposition it is uglier, because we didn't build a universal computable function of bounded variation on the unit interval (not sure what that would mean); so now you have infinitely many Medvedev degrees: for each such function~$f$, the degree of names of Jordan decompositions of~$f$ on the rationals. Many of them will not by Muchnik above PA. Weihrauch makes it much cleaner -- a single problem, with the names already built in in this setting.}

% \andre{OK, thanks for the explanation. Indeed if there is no universal object we don't have a mass problem, but still Wrauch problem. Weihrauch also fits nicely with the Christmas time.}

A natural formalisation of this equivalence uses \emph{Weihrauch reducibility}. The objects compared by this reducibility are binary relations, which can be thought of as pairs of ``instances'' and ``solutions''. For instance, in this section we consider the problem of finding the positive part of a Jordan decomposition on the rationals:
% \begin{itemize}
% 	\item $\PJD{\Q}$ is the problem whose instances are continuous functions~$f$ on $[0,1]$ of bounded variation, and solutions are non-decreasing functions $g\colon I_{\Q}\to \R$ for which $g-f\rest{I_\Q}$ is also non-decreasing. 
% \end{itemize}
\begin{itemize}
	\item $\JD{\Q}$ is the problem whose instances are continuous functions~$f$ on $[0,1]$ of bounded variation, and solutions are Jordan decompositions $(g,h)$ of $f\rest{I_\Q}$.
\end{itemize}
If~$A$ and~$B$ are Weihrauch problems, then we say that $A$ is \emph{Weihrauch reducible} to~$B$ (and write $A\le_{W} B$) if there are two computable mappings $\psi_{\inst}$ and $\psi_{\sol}$ satisfying: for every name~$a$ for an instance for~$A$, $\psi_{\inst}(a)$ is a name for an instance for~$B$, such that whenever~$c$ is a name for a $B$-solution for the instance named by $\psi_{\inst}(a)$, $\psi_{\sol}(a,c)$ is a name for an $A$-solution for the instance named by~$a$. If $\psi_{\sol}$ does not make use of~$a$ then the reduction is called \emph{strong}. Note that the functions $\psi_\inst$ and $\psi_\sol$ are not required to induce functions on the instances and solutions themselves; two names of the same $A$-instance may be mapped by $\psi_\inst$ to names of distinct $B$-instances, and the same holds for the solutions. When we define reductions, though, in order to make things readable, we blur the distinction between names and the objects they name. 

\smallskip

To show the PA-completeness of the Jordan decomposition problem, we will prove the equivalence of the problem $\JD{\Q}$ with a martingale domination Weihrauch problem. Let us define a \emph{lower semicontinuous presentation} of a martingale~$M$ to be a sequence $\seq{M_s}$ of rational-valued martingales such that $M_s \le M_{s+1}$ and $M = \lim_s M_s$. If $\seq{M_s}$ is computable then we also call it a \emph{c.e.\ presentation} of~$M$. We define the following Weihrauch problem.
\begin{itemize}
	\item $\AMD$ is the problem whose instances are lower semicontinuous presentations $\seq{M_s}$ of atomless martingales~$M$; $\AMD$-solutions for~$\seq{M_s}$ are martingales majorizing~$M$ (not necessarily atomless).  
\end{itemize}
We will show:

\begin{prop} \label{prop:sW_equivalence_of_MD_and_PJDQ}
	The  problems $\JD{\Q}$ and $\AMD$ are Weihrauch equivalent. 
\end{prop}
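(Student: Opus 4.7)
The plan is to exhibit Weihrauch reductions in both directions via the standard identification of a martingale~$M$ with the Borel measure $\mu_M$ on $2^\omega \cong [0,1]$ satisfying $\mu_M(\Cyl{\sigma}) = 2^{-|\sigma|}M(\sigma)$, and via the observation that if $g,h\colon I_\Q\to\R$ are nondecreasing with $g-h = f\rest{I_\Q}$, then summing $g(t_{i+1}) - g(t_i) \ge \max(f(t_{i+1}) - f(t_i),0)$ over rational partitions yields $g(q) - g(p) \ge V_f^+([p,q])$ for all rationals $p<q$, where $V_f^+$ denotes the positive variation of~$f$.

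For $\JD{\Q}\le_{W}\AMD$, given a name for a continuous computable $f$ of bounded variation, I would set $M(\sigma) = 2^{|\sigma|}V_f^+(I_\sigma)$, where $I_\sigma$ is the dyadic cylinder of~$\sigma$; approximating $V_f^+$ by progressively finer dyadic partitions produces a c.e.\ presentation $\langle M_s\rangle$, and continuity of~$f$ (hence of $V_f^+$) makes $M$ atomless. Given a name for a majorizing martingale~$N$, define $g(p) = \mu_N([0,p))$ for $p\in I_\Q$; this is computable from a name for~$N$, since the atom $\mu_N(\{p\}) = \inf_n 2^{-n}N(p\rest n)$ is a computable decreasing limit and both $\mu_N([0,p])$ and $\mu_N([0,p))$ can be evaluated by approaching $p$ from the appropriate side through dyadic rationals. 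Monotonicity of $g$ is immediate; that $h := g - f\rest{I_\Q}$ is nondecreasing reduces to $\mu_N([p,q)) \ge f(q)-f(p)$ for rationals $p<q$, which in the dyadic case follows from $N\ge M$, $\mu_M([p,q]) = V_f^+([p,q])$, and $V_f^+([p,q]) \ge f(q)-f(p)$, and in general by approximating $p,q$ from inside by dyadics and invoking continuity of~$f$.

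For $\AMD\le_{W}\JD{\Q}$, given a c.e.\ presentation $\langle M_s\rangle$ of an atomless martingale $M$ (WLOG $M(\emptyset)\le 1$), first retime using convex combinations $(1-\lambda)M_s + \lambda M_{s+1}$ so that stage~$s$ contributes at most $2^{-s}$ in additional $\mu_M$-mass. At each stage~$s$, for every cylinder $I_\sigma$ receiving new mass $m_{\sigma,s}>0$, insert into~$f$ a triangular sawtooth of height~$m_{\sigma,s}$ supported in a small sub-interval $J_{\sigma,s}\subseteq I_\sigma$, chosen on the fly to be disjoint from all previously placed $J$'s (possible since $I_\sigma$ has positive Lebesgue measure and only finitely many earlier $J$'s have been placed). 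The estimate $\sum_s 2^{-s}<\infty$ ensures uniform convergence, so $f$ is computable, continuous, and of bounded variation; disjointness of the $J_{\sigma,s}$ prevents cancellation in positive variation, yielding $V_f^+(I_\tau)\ge \sum_{s,\sigma\succeq\tau} m_{\sigma,s} = \mu_M(I_\tau)$ for each dyadic cylinder $I_\tau$. From a Jordan decomposition $(g,h)$ of $f$ on $I_\Q$, set $N(\sigma) = 2^{|\sigma|}(g(b_\sigma) - g(a_\sigma))$ with $I_\sigma = [a_\sigma,b_\sigma]$: $N$ is a martingale by additivity of~$g$ on adjacent dyadic intervals, and the opening observation gives $N(\sigma) \ge 2^{|\sigma|}V_f^+(I_\sigma) \ge M(\sigma)$.

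The main obstacle will be the construction of~$f$ in the second reduction, where one must simultaneously (i)~slow the c.e.\ presentation enough to guarantee uniform convergence, (ii)~place the sawteeth in subintervals that are uniformly computable, pairwise disjoint, and contained in the correct cylinders, and (iii)~verify that every dyadic cylinder accumulates sufficient positive variation despite the nested structure among the $I_\sigma$'s. The first reduction is comparatively routine, being essentially an application of the measure-martingale correspondence.
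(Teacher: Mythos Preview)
Your proposal has a genuine gap in each direction.

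\textbf{First direction} ($\JD{\Q}\le_W\AMD$). You define $g(p)=\mu_N([0,p))$ for $p\in I_\Q$ and assert this is computable from a name for~$N$. It is not: an $\AMD$-solution~$N$ is not required to be atomless, and for a non-dyadic rational~$p$ the quantity $\mu_N([0,p))=\sup_{q<p,\;q\text{ dyadic}}\mu_N([0,q))$ is only lower semicomputable, while $\mu_N([0,p])$ is only upper semicomputable. Your remark that the atom $\mu_N(\{p\})$ is a ``computable decreasing limit'' does not help, since a decreasing limit of computable reals is merely upper semicomputable. The paper deals with this by first reducing $\JD{\Q}$ to $\JD{\Q_2}$ via a computable order-isomorphism $I_\Q\to I_{\Q_2}$; for dyadic~$q$ the value $\mu_N([0,q))$ is a \emph{finite} sum $\sum_{\tau\le\sigma,|\tau|=|\sigma|}2^{-|\tau|}N(\tau)$ (where $q=r_\sigma$) and is therefore computable regardless of atoms.

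\textbf{Second direction} ($\AMD\le_W\JD{\Q}$). Your sawtooth construction cannot yield $V_f^+(I_\tau)\ge\mu_M(I_\tau)$ for all~$\tau$. A sawtooth supported in $J_{\sigma,s}\subset I_\sigma$ contributes positive variation to $I_\sigma$ but to only one proper sub-cylinder of $I_\sigma$, whereas the $\mu_M$-mass it is meant to represent is spread over \emph{all} sub-cylinders by the martingale property. Concretely: take $M_0=0$ and $M_s$ the constant martingale~$1$ for $s\ge 1$ (retimed as you like). All mass is added at the root, so at each stage you place a single sawtooth somewhere in $[0,1]$; if these all happen to land in $[0,1/2]$ then $V_f^+([1/2,1])=0$ while $\mu_M([1/2,1])=1/2$. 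If instead you add a sawtooth for \emph{every} string~$\sigma$ receiving new mass, the total height added at a single stage is infinite. No placement policy for the~$J_{\sigma,s}$ can anticipate how later stages of the presentation distribute mass among sub-cylinders. The paper avoids this by invoking a result of Freer, Kjos-Hanssen, Nies, and Stephan (with Rute): given a lower-semicontinuous presentation of an atomless measure~$\nu$, one can compute a continuous~$g$ of bounded variation with $V_{\mu^g}=\nu$ \emph{exactly}; the construction alternates signs level by level so that the variation tracks~$\nu$ at every scale, which is precisely what a localized sawtooth cannot do.
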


From this we can deduce the following: 

\begin{cor} \label{cor:PA-completeness_of_JD_on_rationals}
	Suppose that~$X$ is an oracle such that for every computable function $f\colon [0,1]\to \R$ of bounded variation, $X$ can compute a Jordan decomposition $(g,h)$ of $f\rest{I_{\Q}}$. Then $X$ is PA-complete. In fact, there is a single computable function $f\colon [0,1]\to \R$ of bounded variation such that any~$X$ computing a Jordan decomposition of $f\rest{I_{\Q}}$ is PA-complete.
\end{cor}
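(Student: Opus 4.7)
The plan is to combine the Weihrauch equivalence of Proposition~\ref{prop:sW_equivalence_of_MD_and_PJDQ} with the specific martingale constructed in Proposition~\ref{prop:mdom}. Let $M$ be the atomless c.e.\ martingale from Proposition~\ref{prop:mdom}, so that every martingale majorizing~$M$ has PA degree. Fix a computable c.e.\ presentation $\seq{M_s}$ of~$M$; this is a computable name for an $\AMD$-instance.

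Next I would invoke the reduction $\AMD \le_W \JD{\Q}$ provided by Proposition~\ref{prop:sW_equivalence_of_MD_and_PJDQ}. Applying the computable instance map $\psi_\inst$ to the name $\seq{M_s}$ yields a computable name for an instance of $\JD{\Q}$, that is, a computable function $f\colon [0,1]\to\R$ of bounded variation. This~$f$ is the function whose existence the corollary asserts.

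Now suppose $X$ computes a Jordan decomposition $(g,h)$ of $f\rest{I_\Q}$; equivalently, $X$ computes a name~$c$ for a $\JD{\Q}$-solution to the instance named by $\psi_\inst(\seq{M_s})$. Applying the computable solution map $\psi_\sol$ to $(\seq{M_s},c)$ produces a name for an $\AMD$-solution, i.e., a martingale~$N$ majorizing~$M$. Since $\seq{M_s}$ is computable, this construction is a Turing reduction: from~$X$ we compute a name for~$N$. By Proposition~\ref{prop:mdom} (as clarified in Remark~\ref{rmk:continuous_degrees}, where the argument is really that any name for such an~$N$ computes a $\DNC_2$ function), $X$ is PA-complete.

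I do not foresee a real obstacle beyond bookkeeping: the argument is essentially ``plug Proposition~\ref{prop:mdom} into Proposition~\ref{prop:sW_equivalence_of_MD_and_PJDQ}.'' The only care needed is the distinction between continuous and Turing degrees flagged in Remark~\ref{rmk:continuous_degrees}: one has to note that the proof of Proposition~\ref{prop:mdom} actually shows that every \emph{name} for a majorizing martingale computes $\emptyset'$'s settling-time dominator (or a $\DNC_2$ atom), so the reduction through names delivers PA-completeness of~$X$ as a Turing oracle, not merely as a continuous degree. The first assertion of the corollary then follows immediately by applying this to arbitrary computable functions of bounded variation via the Weihrauch reduction.
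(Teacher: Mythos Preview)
Your proposal is correct and follows essentially the same route as the paper: take the atomless c.e.\ martingale~$M$ from Proposition~\ref{prop:mdom}, push its computable presentation through the Weihrauch reduction $\AMD\le_W\JD{\Q}$ of Proposition~\ref{prop:sW_equivalence_of_MD_and_PJDQ} to obtain~$f$, and then pull any $X$-computable Jordan decomposition back through $\psi_\sol$ to get an $X$-computable martingale majorizing~$M$. Your attention to the name/continuous-degree issue from Remark~\ref{rmk:continuous_degrees} is exactly the bookkeeping the paper has in mind.
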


\begin{proof}
	Let $(\psi_\inst,\psi_\sol)$ be a Weihrauch reduction of $\AMD$ to~$\JD{\Q}$. Let $\seq{M_s}$ be a c.e.\ presentation of the atomless martingale~$M$ given by Proposition~\ref{prop:mdom}. Let $f = \psi_\inst(\seq{M_s})$. Since~$\psi_\inst$ is a computable mapping and~$\seq{M_s}$ is computable, so is $f$. Suppose that~$X$ computes a Jordan decomposition $(g,h)$ of $f\rest{I_\Q}$. Then $N=\psi_\sol(\seq{M_s}, (g,h))$ is also $X$-computable; since~$N$ majorizes~$M$, $X$ is PA-complete. 
\end{proof}

On the other hand, Proposition~\ref{prop:sW_equivalence_of_MD_and_PJDQ} also allows us to transfer our non-uniformity result. 

\begin{cor} \label{cor:non-uniformity-of_JD}
	For any computable function $f\colon [0,1]\to \R$ of bounded variation and any~$k$, there is no uniform way of computing a $\DNC_k$ function from (a name of) any Jordan decomposition $(g,h)$ of $f\rest{I_\Q}$. 
\end{cor}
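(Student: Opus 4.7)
The plan is to transfer the non-uniformity established in Proposition~\ref{prop:majorizing-nonuniform} through the Weihrauch equivalence of Proposition~\ref{prop:sW_equivalence_of_MD_and_PJDQ}. Fix $f$ and $k$, and let $(\psi_\inst,\psi_\sol)$ be the computable pair witnessing $\JD{\Q}\leq_W \AMD$: $\psi_\inst(f)$ is a c.e.\ presentation of some atomless martingale $L_f$, and for any (name of a) majorizing martingale $N$ of $L_f$, $\psi_\sol(f,N)$ is a (name of a) Jordan decomposition of $f\rest{I_\Q}$.

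Suppose for contradiction that $\Gamma$ is a uniform procedure that, given a name of any Jordan decomposition of $f\rest{I_\Q}$, outputs a $\DNC_k$ function. Then the composition $N\mapsto \Gamma(\psi_\sol(f,N))$ is a uniform procedure taking names of majorizing martingales of $L_f$ to $\DNC_k$ functions.

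To complete the argument, we need a slight strengthening of Proposition~\ref{prop:majorizing-nonuniform}: for \emph{any} atomless c.e.\ martingale $L$, there is no uniform procedure computing a $\DNC_k$ function from a name of a majorizing martingale of $L$. Inspecting the proof of Proposition~\ref{prop:majorizing-nonuniform}, the only properties of the optimal c.e.\ supermartingale actually used are that the class of majorizing martingales of bounded initial capital is an effectively compact $\Pi^0_1$ class and that $k$ such martingales can be summed to invoke Lemma~\ref{lem:DumbRT}; both hold verbatim for any atomless c.e.\ martingale, so the argument transfers to $L_f$ and yields the desired contradiction. The main point requiring care is precisely this verification that Proposition~\ref{prop:majorizing-nonuniform} generalizes; once it is granted, the rest is a routine composition of Weihrauch maps.
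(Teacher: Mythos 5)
Your proof is correct in outline but takes a less economical route than the paper's. The paper never requires any strengthening of Proposition~\ref{prop:majorizing-nonuniform}. Instead, it exploits the optimality of $m$: since $L_f = \lim_s M_s$ is a c.e.\ martingale, there is a constant $d>0$ with $dm\ge L_f$, so the computable map $N\mapsto dN$ sends every martingale majorizing $m$ to one majorizing $L_f$. The paper therefore sets $\Theta(N)=\Gamma(\psi_\sol(f,dN))$ and applies Proposition~\ref{prop:majorizing-nonuniform} (as stated, about $m$) to $\Theta$, obtaining an $N$ majorizing~$m$ with $\Theta(N)\notin\DNC_k$; then $(g,h)=\psi_\sol(f,dN)$ is a Jordan decomposition of $f\rest{I_\Q}$ on which $\Gamma$ fails. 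Your proposed strengthening---that for any atomless c.e.\ martingale $L$ there is no uniform way to compute a $\DNC_k$ function from a majorizer of $L$---is indeed true, and your verification by re-inspecting the diagonalization would go through after some rescaling to control the initial capital; but observe that this strengthening follows immediately from the original Proposition~\ref{prop:majorizing-nonuniform} via the very same scaling observation ($N\mapsto dN$ is a Medvedev reduction between the two classes of majorizers), with no need to re-open the combinatorics. Both routes reach the same place; the paper's is shorter because it pushes the scaling into the composed functional $\Theta$ rather than into a generalized proposition.
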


\begin{proof}
	Suppose that~$\Gamma$ is a Turing functional mapping (names of) pairs of real-valued functions~$(g,h)$ on~$I_\Q$ to $k$-valued functions on~$\omega$. Let $f\colon [0,1]\to \R$ be computable of bounded variation; we need to show that there is some Jordan decomposition $(g,h)$ of~$f$ such that $\Gamma(g,h)\notin \DNC_k$. 

	Let $(\vphi_\inst,\vphi_\sol)$ be a Weihrauch reduction of~$\JD{\Q}$ to~$\AMD$; let $\seq{M_s} = \vphi_\inst(f)$, and let $M = \lim_s M_s$. Note that $\seq{M_s}$ is computable, so~$M$ is c.e. Recalling that~$m$ is the optimal c.e.\ supermartingale, fix some $d>0$ such that $dm \ge M$. 

	We define a functional~$\Theta$ by letting $\Theta(N) = \Gamma(\psi_\sol(f,dN))$. By Proposition~\ref{prop:majorizing-nonuniform}, there is a martingale~$N$ majorizing~$m$ such that $\Theta(N)\notin \DNC_k$. Then $(g,h) = \psi_\sol(f,dN)$ is a Jordan decomposition of $f\rest{I_\Q}$ such that $\Gamma(g,h)\notin \DNC_k$. 
 \end{proof}

 \begin{remark} 
 	The proof of Corollary~\ref{cor:non-uniformity-of_JD} shows that we can compute a Jordan decomposition on~$I_\Q$ of a computable function~$f$ of bounded variation, uniformly given a martingale~$N$ majorizing~$m$ and a computable index for (a name of)~$f$. This is because the constant~$d$ can be computed given a computable index for~$f$. 
 \end{remark}

It remains to prove Proposition~\ref{prop:sW_equivalence_of_MD_and_PJDQ}. 

\medskip

The first step is to translate the problem to the dyadic rationals, $\Q_2$. Let $I_{\Q_2} = [0,1]\cap \Q_2$. 
We define the following Weihrauch problem:
\begin{itemize}
	\item $\JD{\Q_2}$: instances are continuous functions $f\colon [0,1]\to \R$ of bounded variation; solutions for~$f$ are Jordan decompositions of $f\rest{I_{\Q_2}}$.
\end{itemize}

\begin{lem} \label{lem:transfer_to_dyadic}
	The problems $\JD{\Q}$ and $\JD{\Q_2}$ are strong Weihrauch equivalent. 
\end{lem}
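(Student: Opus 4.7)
The easier direction $\JD{\Q_2} \le_{sW} \JD{\Q}$ is immediate: take $\psi_{\inst}$ to be the identity on instances and $\psi_{\sol}$ the restriction to dyadic points. If $(g,h)$ is a Jordan decomposition of $f\rest{I_\Q}$, then its restriction $(g\rest{I_{\Q_2}}, h\rest{I_{\Q_2}})$ is a Jordan decomposition of $f\rest{I_{\Q_2}}$, and a name of the restriction is obtained by simply forwarding the approximations at dyadic rationals. No access to the instance name is required, so the reduction is strong.

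The substantive direction is $\JD{\Q} \le_{sW} \JD{\Q_2}$. Take $\psi_{\inst}$ again to be the identity, and fix a $\JD{\Q_2}$-solution $(g,h)$ of $f\rest{I_{\Q_2}}$. Because $g, h$ are bounded and nondecreasing on the dense set $I_{\Q_2}$, the one-sided limits $g(q^\pm)$, $h(q^\pm)$ exist at every $q\in I_\Q$. The continuity of $f$ forces $f(q) = g(q^-) - h(q^-) = g(q^+) - h(q^+)$ at each non-dyadic $q$, so the jumps of $g$ and $h$ coincide: $g(q^+) - g(q^-) = h(q^+) - h(q^-)$. Consequently, for any choice $g^*(q) \in [g(q^-), g(q^+)]$, the companion value $h^*(q) := g^*(q) - f(q)$ automatically lies in $[h(q^-), h(q^+)]$; keeping $g^*, h^*$ equal to $g, h$ on $I_{\Q_2}$, the resulting pair $(g^*, h^*)$ is a valid Jordan decomposition on $I_\Q$, regardless of the specific choice of $g^*(q)$ within its permitted interval.

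The effective content of $\psi_{\sol}$ is to produce names of $g^*(q), h^*(q)$ at non-dyadic $q$ from a name of $(g,h)$ alone. The value $f(q)$ needed to define $h^*(q) = g^*(q) - f(q)$ is itself computable from the $\JD{\Q_2}$-data since $f$ is continuous and $f(q) = \lim_{r \to q} (g(r) - h(r))$ along dyadic $r$; an $\epsilon$-approximation is witnessed by searching for a dyadic interval about $q$ on which the oscillation of $g - h$ falls below $\epsilon$, a search that must terminate by continuity of $f$. For $g^*(q)$ itself we commit to the midpoint $(g(q^-) + g(q^+))/2$, approximated by midpoints $(g(s) + g(t))/2$ of bracketing values for dyadic $s < q < t$ drawn from a fixed computable sequence approaching $q$ from each side.

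The main obstacle is verifying that this midpoint commitment delivers a Cauchy name with a computable modulus at non-dyadic jump points $q$, where the bracket width $g(t) - g(s)$ shrinks only to $g(q^+) - g(q^-) > 0$, not to zero. The resolution is that the midpoint error is not controlled by the full width $g(t)-g(s)$ but by the \emph{one-sided} excesses $g(q^-) - g(s)$ and $g(t) - g(q^+)$; each of these does tend to zero as $s \uparrow q$ and $t \downarrow q$, and its smallness can be effectively witnessed by monitoring the computable increments $g(s') - g(s)$ (for $s < s' < q$ dyadic) and $g(t) - g(t')$ (for $q < t' < t$ dyadic), which are themselves upper bounds on the corresponding one-sided excesses and can be driven below any prescribed tolerance by choosing $s, t$ sufficiently close to $q$ in the fixed computable sequence. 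This gives the effective modulus required to present $g^*(q)$, and hence $h^*(q) = g^*(q) - f(q)$, as genuine points of $\R$ uniformly computable in the name of $(g,h)$.
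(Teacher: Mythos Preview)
Your reduction $\JD{\Q_2}\le_{sW}\JD{\Q}$ is fine and matches the paper. The gap is in the other direction.

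The claim that the increments $g(s')-g(s)$ (for dyadic $s<s'<q$) are ``upper bounds on the corresponding one-sided excesses'' is simply false: since $g$ is nondecreasing and $s'<q$, we have $g(s')-g(s)\le g(q^-)-g(s)$, so the increment is a \emph{lower} bound for the excess, not an upper bound. Small increments tell you nothing about how far $g(s)$ still is from $g(q^-)$; the remaining distance is the tail sum of all future increments, which you cannot bound effectively. Concretely, take $f\equiv 0$ and the valid $\JD{\Q_2}$-solution $(g,g)$ where $g(r)=\Omega_k$ for dyadic $r\in[s_k,s_{k+1})$ (with $s_k\uparrow 1/3$ and $\Omega_k\uparrow\Omega$) and $g(r)=1$ for dyadic $r>1/3$. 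Then $g(1/3^-)=\Omega$, $g(1/3^+)=1$, and your midpoint $(\Omega+1)/2$ has no computable Cauchy name obtainable from the name of $g$: the approximations $(\Omega_k+1)/2$ converge, but at the (non-effective) rate at which $\Omega_k$ approaches $\Omega$. More generally, from a name of a nondecreasing $g$ one gets $g(q^-)$ only as a left-c.e.\ real and $g(q^+)$ only as a right-c.e.\ real, and there is no uniform way to name a point of $[g(q^-),g(q^+)]$ that also gives the forced value when the interval degenerates.

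The paper sidesteps this entirely: instead of keeping $\psi_{\inst}$ the identity, it fixes a computable order-preserving bijection $b\colon[0,1]\to[0,1]$ carrying $I_\Q$ onto $I_{\Q_2}$, sends the instance $f$ to the composite $f\circ b$ (still continuous of the same total variation), and on the solution side precomposes with $b$ to pull a decomposition on $I_{\Q_2}$ back to one on $I_\Q$. Because $b$ restricts to a bijection $I_\Q\to I_{\Q_2}$, every rational evaluation of the output is literally a dyadic evaluation of the input solution, so no limits, jumps, or extensions are involved.
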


\begin{proof}
	Let $b\colon [0,1]\to [0,1]$ be a computable, order-preserving bijection such that $b[I_{\Q}] = I_{\Q_2}$; we get this by extending a computable, order preserving bijection between $I_{\Q}$ and $I_{\Q_2}$. Note that $b^{-1}$ is also computable. 

	To reduce $\JD{\Q}$ to $\JD{\Q_2}$, map an instance~$f$ to $f\circ b$; note that if~$f$ has bounded variation, then so does $f\circ b$, in fact $V_{f\circ b}(1) = V_f(1)$. On the solution side, map a pair $(g,h)$ of functions defined on $I_{\Q_2}$ to the pair $(g\circ b^{-1}, h\circ b^{-1})$. 

	To reduce $\JD{\Q_2}$ to $\JD{\Q}$, map an instance~$f$ to itself; On the solution side, map a pair $(g,h)$ of functions defined on $I_\Q$ to the pair $(g\rest{I_{\Q_2}}, h\rest{I_{\Q_2}})$.  
\end{proof}

Just for notational simplicity later, define the following Weihrauch problem:
\begin{itemize}
	\item $\PJD{\Q_2}$: instances are continuous functions $f\colon [0,1]\to \R$ of bounded variation; solutions for~$f$ are functions $g\colon I_{\Q_2}\to \R$ such that $(g,g-f\rest{I_{\Q_2}})$ is a Jordan decomposition of $f\rest{I_{\Q_2}}$. 
\end{itemize}
It is clear that $\PJD{\Q_2}$ is Weihrauch equivalent to $\JD{\Q_2}$; The reduction of $\JD{\Q_2}$ to~$\PJD{\Q_2}$ is not strong. 

\smallskip

The variation $V_f$ of a function $f\colon I_{\Q_2}\to \R$ is defined as usual, except that the partitions have binary rationals as endpoints. If $f\colon [0,1]\to \R$ is continuous of bounded variation then so is $\bar f = f\rest{I_{\Q_2}}$, and $V_{\bar f} = V_f\rest{I_{\Q_2}}$. 

We will transform functions on $I_{\Q_2}$ of bounded variation into signed measures on $[0,1)$. To do this, we associate binary strings with dyadic rational numbers -- the endpoints of the associated intervals -- in the natural way. For the empty string~$\lambda$ we let $l_\lambda = 0$ and $r_\lambda = 1$; for any finite binary string~$\s$, we let $l_{\sigma\conc 0} = l_\sigma$, $r_{\sigma\conc 1} = r_\sigma$, and $r_{\sigma\conc 0} = l_{\sigma\conc 1} = \nicefrac{(l_\sigma+r_\sigma)}{2}$. We write $[\s)$ for the half-open interval $[l_\sigma,r_\sigma)$. 

For a function $f\colon I_{\Q_2}\to \R$ of bounded variation, there is a (unique) signed measure $\mu^f$ on $[0,1)$ defined by 
\[
\mu([\s)) = 	 f(r_\s) - f(l_\s).
\]
The map $f\mapsto \mu^f$ is computable. Observe that $f$ (defined on $I_{\Q_2}$) is non-decreasing if and only if $\mu^f$ is non-negative. If $f\colon [0,1]\to \R$ is continuous then we write $\mu^f$ for $\mu^{f\rest{I_{\Q_2}}}$. For clarity, for a signed measure~$\mu$ and $\s\in 2^{<\w}$, we write $\mu(\s)$ for $\mu([\s))$. 
\begin{obs} \label{obs:cirterion_for_PJD_solution}
	Let $f\colon [0,1]\to \R$ be continuous of bounded variation. A function $g\colon I_{\Q_2}\to \R$ is a $\PJD{\Q_2}$-solution for~$f$ if and only if $\mu^g \ge 0$ and $\mu^{g}\ge \mu^f$. This is because $\mu^{g-f\rest{I_{\Q_2}}}= \mu^g -\mu^{f\rest{I_{\Q_2}}} = \mu^g-\mu^{f}$.
\end{obs}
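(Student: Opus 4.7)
The plan is to unpack the definition of a $\PJD{\Q_2}$-solution and then reduce everything to two elementary facts about the signed-measure assignment $h\mapsto \mu^h$: it is $\R$-linear, and non-negativity of $\mu^h$ characterizes monotonicity of $h$ on $I_{\Q_2}$.

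First, I would recall the definition: $g\colon I_{\Q_2}\to\R$ is a $\PJD{\Q_2}$-solution for $f$ exactly when both $g$ and $g - f\rest{I_{\Q_2}}$ are non-decreasing functions on $I_{\Q_2}$ (this is what makes $(g,\,g-f\rest{I_{\Q_2}})$ a Jordan decomposition of $f\rest{I_{\Q_2}}$). Second, I would invoke the characterization stated just before the observation: a bounded-variation function $h$ on $I_{\Q_2}$ is non-decreasing if and only if $\mu^h(\sigma)\ge 0$ for every binary string $\sigma$. The ``only if'' direction here is immediate from $\mu^h(\sigma) = h(r_\sigma) - h(l_\sigma)$, while the ``if'' direction uses the fact that for any two dyadic rationals $a<b$ in $[0,1]$, $[a,b)$ decomposes as a finite disjoint union of basic dyadic intervals $[\sigma_i)$, so $h(b) - h(a) = \sum_i \mu^h(\sigma_i)\ge 0$. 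Applying this to $h=g$ and $h= g - f\rest{I_{\Q_2}}$, the monotonicity conditions translate into $\mu^g\ge 0$ and $\mu^{g - f\rest{I_{\Q_2}}}\ge 0$.

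Third, I would use the linearity of the measure-assignment: directly from the formula $\mu^h([\sigma)) = h(r_\sigma) - h(l_\sigma)$, one reads off $\mu^{g - f\rest{I_{\Q_2}}} = \mu^g - \mu^{f\rest{I_{\Q_2}}} = \mu^g - \mu^f$. Hence the second condition is equivalent to $\mu^g\ge \mu^f$, yielding the stated biconditional.

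There is no genuine obstacle; the statement is a bookkeeping consequence of the definitions and of the previously noted monotonicity/non-negativity equivalence. The only minor point worth checking is that $g - f\rest{I_{\Q_2}}$ has bounded variation so that its signed measure is well-defined, which follows from the bounded variation of both $g$ (forced by the assumption that some Jordan decomposition involving $g$ exists, equivalently $\mu^g$ is a finite signed measure) and of $f\rest{I_{\Q_2}}$.
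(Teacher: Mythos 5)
Your proof is correct and follows exactly the route the paper intends: unpack the definition of a $\PJD{\Q_2}$-solution into the two monotonicity conditions, invoke the earlier observation that a bounded-variation function on $I_{\Q_2}$ is non-decreasing if and only if its associated signed measure is non-negative, and then use the linearity $\mu^{g - f\rest{I_{\Q_2}}} = \mu^g - \mu^f$ to turn the second monotonicity condition into $\mu^g \ge \mu^f$. You merely spell out the ``if'' direction of the monotonicity--nonnegativity equivalence (via decomposing a dyadic interval into basic dyadic intervals) and the bounded-variation sanity check, both of which the paper leaves implicit.
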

The operation $f\mapsto \mu^f$ has an inverse of sorts: for any (finite) signed measure~$\mu$ on $[0,1)$ we define $f_\mu\colon [0,1]\to \R$ by letting 
\[
	f_\mu(x) = \mu([0,x)). 
\]
This is known as the \emph{cummulative distribution function} of~$\mu$. The function $f_\mu$ is not necessarily $\mu$-computable (rather it is $\mu$-left-c.e.); however $f_\mu\rest{I_{\Q_2}}$ is $\mu$-computable (uniformly), because for $q\in I_{\Q_2}$ positive we have 
\[
	f_\mu(q) = \sum_{\tau \le \s, |\tau|= |\s|} \mu(\tau)
\]
for any $\s$ such that $q = r_\s$. If $g\colon [0,1]\to \R$ is continuous of bounded variation then $f_{\mu^g} = g-g(0)$. A measure~$\mu$ is atomless if and only if~$f_\mu$ is continuous. 

\smallskip

The Hahn decomposition of a signed measure~$\mu$ produces the \emph{variation} $V_\mu$ of~$\mu$ (often denoted by $|\mu|$); it is the least measure~$\nu$ satisfying $\nu(A)\ge |\mu(A)|$ for all Borel~$A$. The measure $V_\mu$ is $\mu$-left c.e., uniformly: there is a computable mapping taking~$\mu$ to a lower semicontinuous presentation of~$V_\mu$. This is because for all~$\s$,
\[
		V_\mu(\s) = \sup_{k\ge |\s|} \sum \left\{ |\mu(\tau)| \,:\,  \tau\succeq \s\andd |\tau|=k   \right\}.
\]
For a continuous~$f$ on~$[0,1]$ of bounded variation we have $V_{\mu^f} = \mu^{V_f}$. 

\smallskip

Finally, we replace martingales by measures in the familiar way: a martingale~$M$ corresponds to the measure defined by $\mu(\s) = 2^{-|\s|} M(\s)$. We thus assume that instances and solutions of $\AMD$ are measures rather than martingales. We are ready to prove one direction of Proposition~\ref{prop:sW_equivalence_of_MD_and_PJDQ}:

\begin{prop} \label{prop:reducing_JD_to_AMD}
	$\PJD{\Q_2}$ is strong Weihrauch reducible to $\AMD$. 
\end{prop}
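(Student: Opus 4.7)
The plan is to use the variation measure $V_{\mu^f}$ as the $\AMD$-instance and recover $g$ as the cumulative distribution function of the majorizing measure.

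On the instance side, given a continuous $f\colon [0,1]\to \R$ of bounded variation, let $\psi_\inst(f)$ be the lower semicontinuous presentation of $V_{\mu^f}$ obtained from the formula
\[
V_{\mu^f}(\s) = \sup_{k\ge |\s|} \sum\left\{ |\mu^f(\tau)| \,:\, \tau\succeq \s \andd |\tau|=k \right\}
\]
displayed earlier in the text, which is computable from (a name of)~$f$. Because $f$ is continuous, so is $V_f$, and since $V_{\mu^f}=\mu^{V_f}$ the measure $V_{\mu^f}$ is atomless; thus $\psi_\inst(f)$ is a legitimate $\AMD$-instance. Translating between measures and martingales (as discussed just before the proposition), this also produces the required atomless c.e.\ martingale presentation.

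On the solution side, given any measure~$\nu$ majorizing $V_{\mu^f}$, define $\psi_\sol(\nu) = g$ where $g(q) = \nu([0,q))$ for $q\in I_{\Q_2}$. This is uniformly $\nu$-computable by the formula $f_\nu(r_\s) = \sum_{|\tau|=|\s|,\,\tau\le \s}\nu(\tau)$. Crucially, $\psi_\sol$ does not depend on the original instance~$f$, so the reduction is strong.

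It remains to verify that $g$ is a $\PJD{\Q_2}$-solution for~$f$. By construction, $\mu^g(\s) = g(r_\s)-g(l_\s) = \nu([\s))$, so $\mu^g = \nu$ as measures on $[0,1)$. Since $V_{\mu^f}\ge 0$ and $V_{\mu^f}\ge \mu^f$ (the defining property of the variation applied to the Borel sets $[\s)$), the hypothesis $\nu\ge V_{\mu^f}$ gives $\mu^g = \nu \ge 0$ and $\mu^g = \nu\ge \mu^f$. By Observation~\ref{obs:cirterion_for_PJD_solution}, $g$ is a $\PJD{\Q_2}$-solution for~$f$, completing the reduction. The main (minor) obstacle is simply keeping track of the computable transit between the four representations in play---function on $[0,1]$, function on $I_{\Q_2}$, signed measure on $[0,1)$, and martingale on $2^{<\omega}$---but each translation is standard and already recorded in the preceding discussion.
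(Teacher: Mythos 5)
Your proposal is correct and matches the paper's proof essentially verbatim: same instance map ($f\mapsto$ a presentation of $V_{\mu^f}=\mu^{V_f}$), same solution map ($\nu\mapsto f_\nu\rest{I_{\Q_2}}$), and the same verification via Observation~\ref{obs:cirterion_for_PJD_solution}. You also correctly flag that the reduction is strong because the solution map does not consult~$f$.
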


\begin{proof}
	On the instance side, we map a continuous function $f\colon [0,1]\to \R$ of bounded variation to a lower semicontinuous presentation of $V_{\mu^f} = \mu^{V_{f}}$; we observed that this can be done computably. Since~$f$ is continuous, so is $V_f$, so $\mu^{V_f}$ is atomless.

	On the solution side, map a measure $\nu$ to $f_\nu\rest{I_{\Q_2}}$.

	To show this works, suppose that~$\nu \ge \mu^{V_{f}}$; then $\nu \ge \mu^f$, as $\mu^{V_{f}}\ge \mu^f$. Also $\mu^g = \nu \ge 0$. By Observation~\ref{obs:cirterion_for_PJD_solution}, $g$ is a $\PJD{\Q_2}$-solution for~$f$. 
\end{proof}

In the other direction, we need two facts.

\begin{lem} \label{lem:Jordan:decomposition_majorizes_variation_measure}
	 Let $(g,h)$ be a Jordan decomposition of a function $f\colon I_{\Q_2}\to \R$ of bounded variation. Then $\mu^g+\mu^h \ge \mu^{V_f}$. 
\end{lem}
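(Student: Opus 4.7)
The plan is to prove the inequality pointwise on dyadic intervals, exploiting two standard facts: that the triangle inequality gives $|\mu^f| \le \mu^g + \mu^h$ on every dyadic cylinder, and that the variation $V_f$ can be computed via refinements by dyadic partitions, so that $V_{\mu^f} = \mu^{V_f}$.

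First, since $f = g - h$ on $I_{\Q_2}$ and both $g$ and $h$ are nondecreasing there, the associated signed measures satisfy $\mu^f = \mu^g - \mu^h$ with $\mu^g, \mu^h \ge 0$. In particular, for every binary string $\tau$,
\[
|\mu^f(\tau)| = |\mu^g(\tau) - \mu^h(\tau)| \le \mu^g(\tau) + \mu^h(\tau).
\]
Summing over $\tau \succeq \s$ of a fixed length $k \ge |\s|$ and using that $\mu^g$ and $\mu^h$ are (non-negative) measures, hence additive over the partition of $[\s)$ by the cylinders $[\tau)$, gives
\[
\sum_{\tau \succeq \s,\, |\tau|=k} |\mu^f(\tau)| \le \mu^g(\s) + \mu^h(\s).
\]
Taking the supremum over $k$ and applying the formula for $V_{\mu^f}$ recalled in the paper yields $V_{\mu^f}(\s) \le \mu^g(\s) + \mu^h(\s)$ for every~$\s$.

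It remains to identify $V_{\mu^f}$ with $\mu^{V_f}$ at the level of dyadic strings. The identity $\mu^{V_f}(\s) = V_f(r_\s) - V_f(l_\s)$ expresses the variation of~$f$ on $[l_\s,r_\s]$ taken over partitions by binary rationals. Any such partition can be refined to one whose endpoints are $\{l_\tau,r_\tau : \tau \succeq \s,\,|\tau|=k\}$ for a sufficiently large~$k$, and passing to a refinement only increases the variation sum by the triangle inequality. This shows that $\mu^{V_f}(\s)$ equals the supremum over~$k$ appearing in the definition of $V_{\mu^f}(\s)$, so $\mu^{V_f} = V_{\mu^f}$. Combined with the previous display, this gives $\mu^{V_f} \le \mu^g + \mu^h$, as required.

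The argument is essentially bookkeeping; the only step that requires a moment's care is the identification of $V_{\mu^f}$ with $\mu^{V_f}$, since the definitions look different (a supremum over all binary-rational partitions versus a supremum over dyadic partitions of fixed depth), but the refinement observation handles it.
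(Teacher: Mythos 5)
Your proof is correct and follows essentially the same route as the paper: establish the pointwise bound $|\mu^f(\sigma)|\le\mu^g(\sigma)+\mu^h(\sigma)$ on dyadic cylinders and then invoke the characterization of $V_{\mu^f}$ as a supremum over dyadic partitions. The paper proves the slightly stronger pointwise bound $\max\{\mu^g(\sigma),\mu^h(\sigma)\}\ge|\mu^f(\sigma)|$ by a sign split on $\mu^f(\sigma)$ rather than your one-line triangle inequality, and it cites the minimality of $V_{\mu^f}$ more tersely where you spell out the summation and refinement steps, but these are cosmetic differences.
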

\begin{proof}
	The minimality property of $V_{\mu^f}$ means that it suffices to show that for all~$\s$, $\max \{ \mu^g(\s),\mu^h(\s)\}\ge |\mu^f(\s)|$. If $\mu^f(\s)\ge 0$ then $\mu^g(\s)\ge \mu^f(\s)$ as $\mu^g\ge \mu^f$ (by Observation~\ref{obs:cirterion_for_PJD_solution}). If $\mu^f(\s)<0$ then $\mu^{h}(\s) = \mu^g(\s)-\mu^f(\s) = \mu^g(\s)+ |\mu^f(\s)|\ge |\mu^f(\s)|$ because $\mu^g\ge 0$.
\end{proof}

The main technical fact  is  taken from the proof of   Theorem~3.5 of~\cite{Freer.Kjos.ea:14} by Freer et al.\ (joint with Rute). That theorem    states that any continuous non-decreasing interval-c.e.\ function $f\colon [0,1] \to \mathbb R$ is of the form $V(g,[0,x])$ for some computable function $g$. (To say that $f$ is interval-c.e.\ means that  the real $f(y)-f(x)$ is left-c.e., uniformly in rationals $x<y$.) This implies   that the associated ``slope" martingale $M(\sigma) = (f(r_\sigma)- f(l_\sigma))/(r_\sigma- r_\sigma)$ is left-c.e. Here we use the equivalent notation of   measures, rather than of martingales.
\andre{if it stays like this we should say that we paraphrase the result}
\noam{Sure. I can add a footnote explaining how this is a paraphrasing.}
\andre{again, I think better in the main text. more and more footnotes makes it look patched}
\andre{please leave the statement of the theorem of [10], for context; and good to know that thm was also about effective analysis.}
\noam{Sorry, I don't understand.}
\begin{prop}[\cite{Freer.Kjos.ea:14}] \label{prop:Freer_et_al}
	There is a computable mapping taking any lower semicontinuous presentation $\seq{\nu_s}$ of an atomless measure~$\nu$ to a continuous function $g\colon [0,1]\to \R$ of bounded variation such that $\nu = V_{\mu^g}$. 
\end{prop}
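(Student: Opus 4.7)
The plan is to build~$g$ as a uniformly convergent, effectively computable series of continuous tent-shaped increments derived from the presentation~$\seq{\nu_s}$, whose variations telescope to~$\nu$. Since~$\nu$ is atomless and $\nu_s\le\nu$ as measures, each~$\nu_s$ is atomless, so each cumulative distribution $f_{\nu_s}(x)=\nu_s([0,x))$ extends continuously to~$[0,1]$. Set $H_s=f_{\nu_s}-f_{\nu_{s-1}}$ (with $f_{\nu_0}=0$); each $H_s$ is a continuous non-decreasing function, and $\sum_s H_s=f_\nu$.

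Define $h_s(x)=d\bigl(H_s(x),\,2^{-s}\Z\bigr)$, the distance from~$H_s(x)$ to the nearest integer multiple of~$2^{-s}$. Then $h_s$ is continuous and computable from $\nu_{s-1},\nu_s$, with $\|h_s\|_\infty\le 2^{-s-1}$, so the Weierstrass $M$-test yields a uniformly convergent computable sum $g=\sum_{s\ge 1}h_s$ that is continuous on $[0,1]$. Moreover, between consecutive crossings of $2^{-s}\Z$ by~$H_s$ the function~$h_s$ traces a single tent of height $2^{-s-1}$ and variation~$2^{-s}$, so the variation function of~$h_s$ equals~$H_s$. Subadditivity of variation gives $V_g\le\sum_s V_{h_s}=f_\nu$, i.e., $V_{\mu^g}\le\nu$.

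The main obstacle is the reverse inequality: naively, oscillations of different~$h_s$'s may partially cancel inside dyadic cylinders. To address this I would modify the construction so that the tent oscillations of the distinct~$h_s$'s occur on pairwise disjoint sub-intervals, at rapidly increasing dyadic scales. Concretely, fix an increasing sequence of levels~$m_s$; inside each level-$m_s$ cylinder $[l_\tau,r_\tau]$ reserve a sub-interval $K_s(\tau)\subseteq [l_\tau,r_\tau]$, arranged so that the collection $\{K_s(\tau):s\ge 1,\,|\tau|=m_s\}$ is pairwise disjoint. Replace~$h_s$ by a function vanishing outside $\bigcup_{|\tau|=m_s}K_s(\tau)$ and carrying total variation $\nu_s(\tau)-\nu_{s-1}(\tau)$ inside each $K_s(\tau)$ via a rescaled tent. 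Because the supports of distinct~$h_s$'s are disjoint, any sufficiently fine partition of a dyadic cylinder refines to one meeting at most one support per sub-interval, so the variations add rather than subtract; this gives $V_g\bigl([l_\s,r_\s]\bigr)=\sum_s\bigl(\nu_s(\s)-\nu_{s-1}(\s)\bigr)=\nu(\s)$ on every dyadic cylinder, i.e., $V_{\mu^g}=\nu$. Atomlessness of~$\nu$ is essential throughout: it propagates to each~$\nu_s$, makes each $H_s$ and hence each $h_s$ continuous, and allows the tent oscillations to be localized on arbitrarily small sub-intervals while preserving the total variation, which is what simultaneously permits uniform convergence of~$g$ and the disjoint-support refinement.
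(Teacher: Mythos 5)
Your first two paragraphs are sound: since $\nu_s\le\nu$ and $\nu$ is atomless, each $\nu_s$ is atomless, so $H_s=f_{\nu_s}-f_{\nu_{s-1}}$ is continuous and non-decreasing, the zigzag $h_s=d(H_s,2^{-s}\Z)$ has variation measure exactly $\nu_s-\nu_{s-1}$, and the bound $\|h_s\|_\infty\le 2^{-s-1}$ makes $g=\sum_s h_s$ continuous with $V_{\mu^g}\le\nu$. You are also correct that cancellation genuinely happens: with $\nu$ Lebesgue, $\nu_1=\nu/2$ and $\nu_s=\nu$ for $s\ge 2$, the tents of $h_1$ and $h_2$ are out of phase on $[1/4,3/4]$ and one computes $V_g([0,1))=1/2<1$.

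The disjoint-support modification, however, destroys the very property that makes $\nu_s(\sigma)-\nu_{s-1}(\sigma)$ appear in your final display. That display requires the variation of $h_s$ on each dyadic cylinder $[l_\sigma,r_\sigma)$ to be $\nu_s(\sigma)-\nu_{s-1}(\sigma)$; this is exactly the statement that $V_{h_s}=\nu_s-\nu_{s-1}$ as a measure, which forces $h_s$'s oscillation to be spread over all of $[0,1]$ in proportion to $\nu_s-\nu_{s-1}$. Once you concentrate the entire variation of $h_s$ over $[l_\tau,r_\tau]$ into a small sub-interval $K_s(\tau)$ (with $|\tau|=m_s$), that equality survives only down to level $m_s$: for $\sigma\succ\tau$ with $|\sigma|>m_s$, the variation of the modified $h_s$ on $[l_\sigma,r_\sigma)$ is either the whole of $\nu_s(\tau)-\nu_{s-1}(\tau)$ or $0$, depending on whether $K_s(\tau)$ lands inside $[l_\sigma,r_\sigma)$ --- in general not $\nu_s(\sigma)-\nu_{s-1}(\sigma)$. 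Since disjointness forces $m_s\to\infty$, every non-root $\sigma$ has finitely many such bad stages, and their uncontrolled contribution means $V_g([l_\sigma,r_\sigma))\ne\nu(\sigma)$. The underlying tension is: (a) each summand must distribute its variation in proportion to $\nu_s-\nu_{s-1}$, hence be spread out; and (b) the oscillations of successive summands must be phase-aligned so variations add rather than cancel. Concentrating the support buys (b) at the cost of (a). The proof the paper relies on (Freer, Kjos-Hanssen et al.) resolves this differently: it builds the signed measure $\eta=\mu^g$ level by level, at each split setting $|\eta(\tau\conc i)|=\nu_s(\tau\conc i)$ on the child $i$ with the smaller $\nu_s$-value and giving it the \emph{same sign as $\eta(\tau)$}. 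That sign-tracking is the alignment mechanism your sketch is missing; it keeps $\eta$ spread out while forcing $\sum_{|\tau|=n}|\eta(\tau)|$ to track $\nu_s$, with the discrepancy killed by atomlessness, so that $V_\eta=\nu$.
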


\begin{proof}[Sketch of proof]
	We define a signed measure~$\eta$ and let $g=f_\eta$ (so $\eta = \mu^g$). The rough idea is as follows. By stage~$s$ we have defined~$\eta(\s)$ for all $\s$ of length $\le \ell_s$ for some $\ell_s\in \omega$, with $|\eta(\s)|\le \nu_s(\s)$ for all such~$\s$. At stage~$s$ we define~$\eta(\tau)$ for longer strings~$\tau$ (preserving $|\eta(\tau)|\le \nu_s(\tau)$), by letting $|\eta(\tau\conc i)| = \nu_s(\tau\conc i)$ for the~$i$ for which the latter is the smaller between $\nu_s(\tau\conc 0)$ and $\nu_s(\tau\conc 1)$; but we keep the sign of $\eta(\tau\conc i)$ the same as that of $\eta(\tau)$. As we go along, at every level $n\ge \ell_s$, at most $2^{\ell_s}$ many strings~$\tau$ of length~$n$ have $|\eta(\tau)|\ne \nu_s(\tau)$. As~$\nu_s$ is atomless (because~$\nu$ is), eventually the discrepancy between~$\nu_s(\s)$ and $\sum_{\tau\succ \s\andd |\tau|=n} |\eta(\tau)|$ is small for each~$\s$ of length~$\ell_s$, which is when we halt stage~$s$ and declare the next value $\ell_{s+1}$. To get $g=f_\eta$ computable from~$\seq{\nu_s}$, we need to ensure that~$\ell_s$ is sufficiently long so that $\nu_s(\s)\le 2^{-s}$ for all~$\s$ of length~$\ell_s$, which again is possible because~$\nu_s$ is atomless.\footnote{The proof as written in~\cite{Freer.Kjos.ea:14} uses martingales instead of measures; to translate to the notation of that paper, $\nu(\s) = 2^{-|\s|}M(\s)$ and $\eta(\s)= 2^{-|\s|}L(\s)$. Lemma 3.3 of~\cite{Freer.Kjos.ea:14} constructs a computable signed measure~$\eta$ such that $\nu = V_\eta$; In Theorem 3.5, the construction is modifed to get $\eta = \mu^g$ with~$g$ computable, starting with a function~$f$ such that $\nu = \mu^f$.}
\end{proof}
\andre{I'm not sure this is better than the previous version. Here you can't rely on the proof in the Freer paper because the notation is too different.  In my version I relied on the  $V_L$ defined precisely in [10]. Are you paraphrasing that argument but avoiding martingales?  Maybe you can rewrite this version so that the proof in [10] gives the detail.}
\noam{The proof sketched here is essentially identical to the proof in the paper. Martingales and measures are really the same thing. All you do is divide by $2^{|\s|}$. $V_\mu$ is precisely the same as $V_L$, just in the language of measures.}
\andre{ok pls insert the content of this comment into the main text}
\noam{This is the content of footnote 5, which I really think should remain a footnote.}

The following now completes the proof of Proposition~\ref{prop:sW_equivalence_of_MD_and_PJDQ}:

\begin{prop} \label{prop:reducing_AMD_to_JD}
	$\AMD$ is strong Weihrauch reducible to $\JD{\Q_2}$. 
\end{prop}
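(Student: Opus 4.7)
The plan is to combine Proposition~\ref{prop:Freer_et_al} with Lemma~\ref{lem:Jordan:decomposition_majorizes_variation_measure}. Given a lower semicontinuous presentation $\seq{\nu_s}$ of an atomless measure~$\nu$, apply Proposition~\ref{prop:Freer_et_al} to obtain (computably in $\seq{\nu_s}$) a continuous function $g\colon [0,1]\to \R$ of bounded variation such that $\nu = V_{\mu^g}$. This~$g$ will be the $\JD{\Q_2}$-instance: set $\psi_\inst(\seq{\nu_s}) = g$.

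On the solution side, given a Jordan decomposition $(g_1,g_2)$ of $g\rest{I_{\Q_2}}$ (two non-decreasing functions on $I_{\Q_2}$ with $g\rest{I_{\Q_2}} = g_1 - g_2$), output the measure $\eta = \mu^{g_1} + \mu^{g_2}$. A name for $g_i$ allows us to compute $g_i(r_\s)-g_i(l_\s) = \mu^{g_i}(\s)$ to arbitrary precision for each dyadic string~$\s$, so a name for~$\eta$ can be computed from $(g_1,g_2)$; in particular $\psi_\sol$ does not consult~$\seq{\nu_s}$, making the reduction strong. Note that~$\eta$ corresponds to a martingale in the usual way (via $M(\s)=2^{|\s|}\eta(\s)$).

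It remains to check that~$\eta$ is a valid $\AMD$-solution, i.e., that $\eta\ge \nu$ as measures (equivalently, the associated martingale majorizes~$M$). Since $g_1$ and $g_2$ are non-decreasing, $\mu^{g_1},\mu^{g_2}\ge 0$, so $\eta\ge 0$. By Lemma~\ref{lem:Jordan:decomposition_majorizes_variation_measure} applied to $\bar f = g\rest{I_{\Q_2}}$,
\[
\eta = \mu^{g_1}+\mu^{g_2} \ge \mu^{V_{\bar f}} = \mu^{V_g} = V_{\mu^g} = \nu,
\]
where the equality $\mu^{V_g} = V_{\mu^g}$ was noted in the paragraph preceding Observation~\ref{obs:cirterion_for_PJD_solution}, and $V_{\mu^g}=\nu$ is the defining property of~$g$ from Proposition~\ref{prop:Freer_et_al}. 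This completes the reduction.

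No substantial obstacle is anticipated: the hard technical work has already been done in Proposition~\ref{prop:Freer_et_al} (which encodes any atomless lower semicontinuous measure as the total variation measure of a computable continuous function) and Lemma~\ref{lem:Jordan:decomposition_majorizes_variation_measure} (which says that the sum of the two halves of any Jordan decomposition dominates the variation measure). The only point requiring care is the bookkeeping between martingales, measures, and functions on $I_{\Q_2}$, and verifying that the maps $\seq{\nu_s}\mapsto g$ and $(g_1,g_2)\mapsto \mu^{g_1}+\mu^{g_2}$ are genuinely computable on names, both of which follow from the computability statements in the preceding discussion.
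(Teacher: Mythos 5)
Your proposal is correct and follows exactly the paper's own proof: on the instance side you apply Proposition~\ref{prop:Freer_et_al} to produce a continuous~$g$ with $\nu = V_{\mu^g}$, and on the solution side you map a Jordan decomposition $(g_1,g_2)$ to $\mu^{g_1}+\mu^{g_2}$, invoking Lemma~\ref{lem:Jordan:decomposition_majorizes_variation_measure} to verify domination. The only difference is that you spell out the chain $\mu^{g_1}+\mu^{g_2}\ge \mu^{V_g}=V_{\mu^g}=\nu$ explicitly, which the paper leaves implicit.
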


\begin{proof}
	On the instance side, using Proposition~\ref{prop:Freer_et_al}, map a lower semicontinuous presentation $\seq{\nu_s}$ of an atomless measure function~$\nu$ to some continuous~$f$ such that $\nu = V_{\mu^f} = \mu^{V_f}$. On the solution side, map $(g,h)\colon I_{\Q_2}\to \R$ to $\mu^g+\mu^h$. Lemma~\ref{lem:Jordan:decomposition_majorizes_variation_measure} says that this works. 
\end{proof}

%%%%%%%%
%%%%%%%%
\section{A \texorpdfstring{$K$}{K}-compression function without PA degree}
\label{sec:K-compression}
%%%%%%%%
%%%%%%%%

 We provide a    proof that  there is a $K$-compression function that does not have PA degree. This  should be considered a warm-up for the somewhat more involved proof of Theorem~\ref{thm:main-separation}, which by Propositions~\ref{prop:equiv} and~\ref{prop:continuous-to-discete} implies the present result.

As this is a warm-up, we introduce notation which may appear cumbersome at present, but will be useful later. In the current argument, we work in the space $\id^\w$, which, recall, is the space of identity-bounded functions. We also let $\id^{\le \w} = \id^\w\cup \id^{<\w}$ be the collection of idnetity-bounded sequences, finite and infinite. For $\s\in \id^{<\w}$, we let 
\[
	[\s] = \left\{ f\in \id^{\le \w} \,:\,  \s\preceq f \right\}. 
\]
The sets $[\s]\cap \id^\w$ are the basic clopen subsets of $\id^\w$, and generate the topology on that space, which is the topology inherited from Baire space. 

For convenience, we treat prefix-free complexity~$K$ as a function on $\omega$ (via the length-lexicographical ordering of binary strings). The weight of a function  $f\in \w^{\le \w}$~is
\[
\wt(f) = \sum_{n\in \dom f} 2^{-f(n)}.
\]
We say that $f\in \w^\w$ has \emph{finite weight} if $\wt(f)<\infty$. For a set $A\subseteq \id^{\le \w}$ and real number $r$, we let
\[
	A_{\le r} = \left\{ f\in A  \,:\,  \wt(f)\le r   \right\},
\]
and we similarly define $A_{<r}$ and $A_{>r}$. If $P\subseteq \idb$ is a $\Pi^0_1$ class, then for any rational number~$q$, $P_{\le q}$ is a $\Pi^0_1$ class as well. This is not usually true for $P_{<q}$ (let alone $P_{>q}$). Note that the space $\idb$, by definition, is computably bounded, and so $\Pi^0_1$ subclasses of~$\idb$ are effectively compact: from a cover of such a set generated by a c.e.\ collection of basic clopen sets, we can effectively find a finite sub-cover. Also, every PA degree computes an element of each nonempty such set. 

% If $P\subseteq\BS$ and $q\in\Q$, then we let $P_{\le q} = \{f\in P\colon \wt(f)\leq q\}$. Define $P_{<q}$ similarly. Note that if $P$ is a (computably bounded) $\Pi^0_1$ class, then $P_{\le q}$ is too. (This is not usually true of $P_{<q}$.)

Let $P^K = \{f\in\idb\colon f\leq K\}$. Note that $P^K$ is a $\Pi^0_1$ class. 

\begin{lem} \label{lem:PK_has_finite_weight}
	$P^K$ contains a finite weight function; indeed, $P^K_{\le 3}\ne \emptyset$.
\end{lem}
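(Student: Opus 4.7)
The plan is to exhibit an explicit element of $P^K_{\le 3}$ by taking the pointwise minimum of $K$ and the identity.

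Define $f\colon \omega \to \omega$ by $f(n) = \min(n, K(n))$. Then $f(n)\le n$, so $f\in \idb$; and $f(n)\le K(n)$, so $f\in P^K$. It remains to bound the weight. For every $n$,
\[
2^{-f(n)} = \max(2^{-n}, 2^{-K(n)}) \le 2^{-n} + 2^{-K(n)}.
\]
Summing over $n \in \omega$ and using Kraft's inequality $\sum_n 2^{-K(n)}\le 1$ (which follows from the prefix-freeness of the domain of the optimal prefix-free machine, together with the length-lexicographical identification of $\omega$ with $2^{<\omega}$), we obtain
\[
\wt(f) = \sum_{n\in\omega} 2^{-f(n)} \le \sum_{n\in\omega} 2^{-n} + \sum_{n\in\omega} 2^{-K(n)} \le 2 + 1 = 3.
\]
Hence $f\in P^K_{\le 3}$, and in particular $P^K_{\le 3}\ne \emptyset$.

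There is no real obstacle here; the only thing to notice is that identity-boundedness is exactly what is needed to tame the contributions to the weight coming from those $n$ at which $K(n)$ happens to be very large (as many as $n + O(\log n)$), which is why naively taking $f = K$ would not give finite weight. Capping $K$ at the identity costs at most the geometric series $\sum_n 2^{-n} = 2$, leaving the Kraft contribution of $1$, for a total of $3$.
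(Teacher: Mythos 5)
Your proof is correct and is essentially the paper's own argument: the paper also takes $K^*(n) = \min\{K(n),n\}$ and bounds $\wt(K^*) \le \wt(K) + \sum_n 2^{-n} < 3$. The only cosmetic difference is that the paper uses the strict Kraft bound $\wt(K) < 1$ to get $\wt(K^*) < 3$, whereas you use $\le 1$ and conclude $\le 3$; both suffice for $P^K_{\le 3}\ne\emptyset$.
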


\begin{proof}
	This follows from the fact that $\wt(K) = \sum_{n\in\omega} 2^{-K(n)}<1$. Let $K^*(n) = \min\{K(n), n\}$. Then $K^*\in P^K$ and $\wt(K^*)\leq \wt(K)+\sum_{n\in\omega} 2^{-n} < 3$.
\end{proof}

% We claim that $P^K$ contains a finite  weight function; this follows easily from the fact that $\wt(K) = \sum_{n\in\omega} 2^{-K(n)}<1$. In particular, let $K^*(n) = \min\{K(n), n\}$. Then $K^*\in P^K$ and $\wt(K^*)\leq \wt(K)+\sum_{n\in\omega} 2^{-n} < 3$. Therefore, $P^K_{\leq 3}$ is a computably bounded and nonempty $\Pi^0_1$ class containing only bounded weight $K$-bounded functions. 
This proves that every PA degree computes a $K$-bounded function of finite weight (a fact we already saw in the introduction). Note that as $\wt(K)<1$, a $K$-bounded function of finite weight can be, by finite alteration, changed to a $K$-bounded function with weight bounded by~$1$, so such functions have the same Turing degrees as $K$-compression functions.

Our goal is to prove that being of  PA degree is \emph{not necessary} to compute a $K$-bounded function of finite weight.

\begin{thm}\label{thm:K-bounded}
There is a $K$-bounded function $f\colon\omega\to\omega$ of finite weight that does not have PA degree.
\end{thm}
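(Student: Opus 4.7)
Plan: I prove Theorem~\ref{thm:K-bounded} by a forcing argument using nonempty $\Pi^0_1$ subclasses of $P^K_{\le 3}$. Starting with $Q_0 = P^K_{\le 3}$, which is nonempty by Lemma~\ref{lem:PK_has_finite_weight}, I build a descending sequence $Q_0 \supseteq Q_1 \supseteq \cdots$ of nonempty $\Pi^0_1$ classes where at stage $s = e$ we ensure that no $f \in Q_{s+1}$ has $\Gamma_e^f$ a $\DNC_2$ function, with $\{\Gamma_e\}$ an effective list of Turing functionals. By compactness of $\idb$, $\bigcap_s Q_s$ is nonempty, and any $f$ in it is $K$-bounded of finite weight ($\le 3$) and does not have PA degree.

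The core step at stage $s = e$ combines a splitting argument with the recursion theorem. I search for two strings $\sigma_0, \sigma_1$ extendible in $Q_s$ and an $n$ such that $\Gamma_e^{\sigma_0}(n)\downarrow$, $\Gamma_e^{\sigma_1}(n)\downarrow$, and the two values differ with both in $\{0,1\}$; via the recursion theorem, I arrange $n = n_0$ and a choice between $\sigma_0, \sigma_1$ so that $\Gamma_e^{\sigma_0}(n_0) = J(n_0)$. Then $Q_{s+1} = Q_s \cap [\sigma_0]$ is nonempty $\Pi^0_1$ and $\Gamma_e^f(n_0) = J(n_0)$ for every $f \in Q_{s+1}$, so $\Gamma_e^f \notin \DNC_2$. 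If instead some $\sigma$ extendible in $Q_s$ satisfies $\Gamma_e^\sigma(n)\downarrow \notin \{0,1\}$, I restrict analogously. If neither a splitting pair nor a non-$\{0,1\}$ value exists, then wherever convergent, $\Gamma_e^f$ is a single well-defined $\{0,1\}$-valued function $g$ on $f \in Q_s$; in this case $g$ is computable from the effective approximation to $Q_s$, and a computable function cannot be $\DNC_2$, so $Q_{s+1} = Q_s$ suffices.

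The main obstacle is ensuring the recursion-theoretic search yields \emph{truly} extendible witnesses $\sigma_0, \sigma_1$ in $Q_s$, rather than strings that merely survive in the approximation $Q_s[t]$ at some finite stage. Here the weight structure of $P^K$ is essential: since $\wt(K) < 1$, any $\sigma \in P^K$ with $\wt(\sigma) \le 2$ extends to an element of $P^K_{\le 3}$ by continuing with $K$, so extendibility in $Q_s$ can be controlled by the $\Pi^0_1$ condition $\sigma \in P^K$ together with an effective weight bound on $\sigma$. By restricting the search to candidates with sufficient weight slack and exploiting the approximation carefully, the recursion theorem argument produces genuine witnesses. This weight-based extendibility criterion, which is absent for arbitrary computably bounded $\Pi^0_1$ classes, is the essential structural feature of $P^K_{\le 3}$ driving the theorem and serves as the conceptual warm-up for the more elaborate continuous covering separation in Theorem~\ref{thm:main-separation}.
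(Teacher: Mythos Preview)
Your case split has a gap in both branches, and the missing idea is exactly what drives the paper's proof. In the splitting branch, the recursion theorem requires a \emph{computable} process to output $J(e)$; but your extendibility criterion ``$\sigma\in P^K$ plus a weight bound'' is, as you yourself note, only $\Pi^0_1$ (the clause $\sigma\le K$ is not decidable), so a search among truly extendible strings cannot feed the recursion theorem, while a search over the approximating tree may return $\sigma_0,\sigma_1$ neither of which is extendible, leaving $Q_s\cap[\sigma_i]=\emptyset$. The line ``exploiting the approximation carefully'' is precisely the step that needs an argument and does not receive one. In the no-splitting branch, the claim that the common function $g$ is computable from the approximation is likewise unjustified: the approximating tree has dead ends on which your hypothesis says nothing, so a search may return a value from a non-extendible string disagreeing with $g$. (``No $e$-split on extendible nodes'' does not imply ``no $e$-split on the tree'', and only the latter yields computability.)

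The paper avoids this dichotomy altogether. After arranging breathing room of $3\epsilon$ between $\wt(\sigma^*)$ and the weight cap $q$, one removes from $P^*$ every function majorising some string $\tau$ of weight $<r+2\epsilon$ with $\Gamma(\tau,e)\converge$. Either what remains still has an element of weight $\le r+2\epsilon$ (forcing divergence), or compactness produces a finite set $\widehat E$ of strings of weight $<r+\epsilon$, at least one extendible in $P^*$---but one cannot tell which. The key is that the pointwise minimum of any two elements of $\widehat E$ has weight $<r+2\epsilon$, so Lemma~\ref{lem:DumbRT} applied to the colouring $\{\sigma,\sigma'\}\mapsto\Gamma(\tau,e)$ (for some $\tau\le\min(\sigma,\sigma')$) returns a single $i\in\{0,1\}$ that works for \emph{every} $\sigma\in\widehat E$ simultaneously. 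One sets $J(e)=i$ computably; only afterwards does one non-effectively select the extendible $\sigma$ and a witness $\tau\le\sigma$, and downward closure of the condition under pointwise $\le$ guarantees that restricting to $[\tau]$ still yields a nonempty condition of weight $\le q$. Your proposal never separates the computable step (outputting $J(e)$) from the non-effective step (choosing the restriction), and the pointwise-minimum trick together with Lemma~\ref{lem:DumbRT} is exactly what makes that separation possible.
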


\begin{proof}
We build $f$ using a forcing argument. The \emph{forcing conditions} are triples of the form $(\s,P,q)$ where:
\begin{itemize}
	\item $\s\in \id^{<\w}$;
	\item $P\subseteq P^K \cap [\s]$ is a $\Pi^0_1$ class such that:
	\begin{itemize}
		% \item every $h\in P$ extends $\s$;
		\item if $h\in P$, $g\le h$, and $g\in [\s]$, then $g\in P$;
	\end{itemize}
	\item $q\in\Q$ and $P_{\le q}\neq\emptyset$.
\end{itemize}
The condition $(\s,P,q)$ should be thought of as saying that $f\in P_{\le q}$. We say that $(\tau,R,s)$ \emph{extends} $(\s,P,q)$ if $\s\preceq\tau$, $R\subseteq P$, and $s\leq q$. Note that $(\seq{},P^K,3)$ is a condition, so the set of conditions is nonempty. 

For a filter $G$ of forcing conditions, we let 
% \[ f_G = \bigcup \s\Cyl{(\s,P,q)\in G\text{ for some $P$ and $q$}} .\]
\[
f_G = \bigcup\; \set{\s}{(\s,P,q)\in G\text{ for some $P$ and $q$}}.
\]
Then $f_G\in \id^{\le \w}$. If $(\s,P,q)$ is a condition, then we can find~$\tau$ properly extending $\s$ such that $(\tau,P\cap[\tau],q)$ %\noam{$P\cap \tau$?}
is also a condition (take $\tau$ to be an initial segment of a function witnessing that $P_{\le q}$ is nonempty). This shows that if $G$ is only mildly generic, then $f_G$ is defined on all of $\w$. 

\begin{lem}\label{lem:silly_stuff}
	Suppose that $(\s,P,q)\in G$. Then $f_G\in P_{\le q}$. 
\end{lem}
\begin{proof}
	Let $\tau\prec f_G$. Then there is a condition $(\tau,Q,s)\in G$. By extending this condition (and possibly $\tau$), we may assume that $(\tau,Q,s)$ extends the condition $(\s,P,q)$. Since $Q_{\le s}$ is nonempty and $Q_{\le s} \subseteq [\tau]\cap P_{\le q}$, we have that $[\tau]\cap P_{\le q}$ is nonempty. This is true for all $\tau\prec f_G$. Since $P_{\le q}$ is closed, we have $f_G\in P_{\le q}$. 
\end{proof}

By definition, $P\subseteq P^K$ for any condition $(\s,P,q)$, so $f_G$ is $K$-bounded. Lemma~\ref{lem:silly_stuff} also implies that $\wt(f_G)$ is finite.

\medskip
There is not much difference between $P_{\le q}$ and $P_{<q}$. 

\begin{lem}\label{lem:lose_weight_now_ask_me_how}
Let $(\s,P,q)$ be a condition. Then $P_{<q}$ is nonempty. 
\end{lem}
\begin{proof}
Suppose not. Then $P_{\le q}$ is nonempty and every element of $P_{\le q}$ has weight exactly $q$, i.e., $P_{\le q} = P_{=q}$. This gives us an algorithm for computing $\emptyset'$. Note that if $m$ enters $\emptyset'$ at stage $s$, then $K(s)\le^+ m$. Hence it suffices, given any $m<\w$ to find some $n<\w$ such that $K(x)\ge m$ for all $x\ge n$.

To do so, let $T$ be a computable subtree of $\id^{<\w}$ such that $[T] = P_{=q}$. For $r<q$, recall that $\id^{<\w}_{>r}$ is the collection of finite $\tau$ such that $\wt(\tau)>r$.\footnote{Note that it is possible that $\wt(\tau)< r$ but every infinite extension of~$\tau$ in $\idb$ has weight $>r$; indeed, $[\tau] \subseteq \idb_{>r}$ if and only if $\wt(\tau)+2^{-|\tau|+1}>r$. Nonetheless, $\idb_{>r}$ is the open set generated by $\id^{<\w}_{>r}$.}  
%be the set of finite strings $\s\in \id^{<\w}$ with $\wt(\s)>r$; so $\id^\w_{>r}$ is the open subset of $\id^\w$ generated by $O_r$. (Note however that it is possible that $[\s]\cap \id^\w\subseteq \id^\w_{>r}$ for strings $\s\notin O_r$; $[\s]\cap \id^\w\subseteq \id^\w_{>r}$ if and only if $\wt(\s)+2^{-|\s|+1}> r$.) 
Let $T_n$ be the set of strings on $T$ of length $n$. Since $P_{=q}\subseteq \id^\w_{>r}$ and $\id^\w$ is compact, for every $r<q$ there is an $n\in\w$ such that $T_n\subseteq \id^{<\w}_{>r}$; such~$n$ can be of course found effectively from~$r$. If $T_n \subseteq \id^{<\w}_{>q-2^{-m}}$, then $K(x)>m$ for all $x\ge n$. For we know that there is some $\s\in T_n$ which is extendible ($[\s]\cap P_{=q}\ne \emptyset$); if $h\in [\s]\cap P_{=q}$, $x\ge |\s|$, and $\wt(h)-\wt(\s)< 2^{-m}$ then $h(x)>m$. Since $(\s,P,q)$ is a condition, we know that $h \le K$. 
\end{proof}

\begin{remark}
By the foregoing fact, if $(\rho,R,t)$ is a condition then there is some $t'<t$ such that $(\rho,R,t')$ is a condition as well. Thus, by genericity, if $(\s,P,q)\in G$, then there is some $q'<q$ such that $(\s,P,q')\in G$. By Lemma~\ref{lem:silly_stuff}, $f_G\in P_{\le q'}$, and so $f_G\in P_{<q}$.	
\end{remark}

The main work is  to show that $f_G$ does not have PA degree. This will follow from genericity (and Lemma~\ref{lem:silly_stuff}), once we show that for any Turing functional $\Gamma$, the collection of conditions 
\[
D_\Gamma = \left\{ (\s,P,q)\,:\, (\forall h\in P_{\le q})\; \Gamma(h)\notin \DNC_2 \right\}
\]
is dense in our forcing partial order. %below the base condition $(\seq{},P^K,3)$ 

% Let $(\s,P,q)$ be a condition. We want to find an extension of this condition in $E_\Gamma$. 
First, we extend to a condition that gives us some ``breathing room''. We let 
\[
F = \left\{ (\s,P,q)\,:\,  P_{<\wt(\s)+\epsilon}\ne\emptyset \text{ where } \epsilon = (q-\wt(\s))/3 \right\}.
\]

\begin{lem} \label{lem:compression_functions:breathing_room}
	The collection~$F$ of conditions
	%  $(\s,P,q)$ for which there is some~$\epsilon>0$ such that:
	% \begin{itemize}
	% 	\item $\wt(\s)+3\epsilon <q$; and
	% 	\item $P_{<\wt(\s)+\epsilon}$ is nonempty
	% \end{itemize}
	is dense. 
\end{lem}

\begin{proof}
	Let $(\tau,Q,p)$ be a condition. By Lemma~\ref{lem:lose_weight_now_ask_me_how}, let $h\in Q_{<p}$. Pick $\epsilon$ small enough so that $\wt(h)+3\epsilon < p$. Take $\s\prec h$  extending $\tau$ such that $\wt(h)-\wt(\sigma)<\epsilon$. Then $(\s,Q\cap [\s],\wt(\s)+3\epsilon)$ is an extension of $(\tau,Q,p)$ in~$F$. 
\end{proof}

It thus suffices to show that every condition in~$F$ has an extension if $D_\Gamma$. 

Note that if $(\s,P,q)\in F$ with $\epsilon = (q-\wt(\s))/3$, then $(\s,P,\wt(\s)+\epsilon)$ is also a condition; however we will find an extension of $(\s,P,q)$ in~$E_\Gamma$, rather than of $(\s,P,\wt(\s)+\epsilon)$.  \andre{this remark not useful}
\noam{I disagree.}

\medskip

%  we find an extension $(\s^*,P^*,q)$ of $(\s,P,q)$ and a rational $\epsilon>0$ such that, letting $r = \wt(\s^*)$, we have:
% \begin{itemize}
% 	\item $r+3\epsilon < q$; and
% 	\item $P^*_{<r+\epsilon}$ is nonempty. 
% \end{itemize}
% Note however that even though this means that $(\s^*,P^*,r+\epsilon)$ is a condition, we will find, in $E_\Gamma$, an extension of $(\s^*,P^*,q)$ rather than of $(\s^*,P^*,r+\epsilon)$. 

% Finding $\s^*$, $P^*$ and $\epsilon$ is easy. By Lemma~\ref{lem:lose_weight_now_ask_me_how}, let $h^*\in P_{<q}$. Pick $\epsilon$ small enough that $\wt(h^*)+3\epsilon < q$. Take $\sigma^*\prec h^*$ such that $\wt(h^*)-\wt(\sigma^*)<\epsilon$ and let $P^* = P\cap [\s^*]$.

Fix some $(\s^*,P^*,q)\in F$; let $r = \wt(\s^*)$ and $\epsilon = (q-r)/3$. 

\smallskip

As we did in the proof of Proposition~\ref{prop:C-nonuniform:restatement}, we define a partial computable process \andre{why `process'? what makes a process partial computable?  this is vague, we actually define a p.r.\ function $\psi(e)$. In the end there is  $p$ such that $J(p(e)) = \psi(e)$ for each $e$, and then we use a FP obtained through the  RT for $p$ to define the extension that makes   $\Gamma(h)$  not DNC2} \noam{This is not as complicated as you put it, and was used above already (proof of Prop 2.5). We define a p.r. function $\psi$; by the recursion theorem, there is an $e$ such that $J(e) = \psi(e)$.  I think that ``process'' is fine, since we only use one input. I added a footnote above, but I'm not sure it's necessary.} \andre{Sorry can't spot that footnote, is it 5? If it's a p.r. function then we should say that, why confuse the reader with a vaguely defined notion of "process"? CTT is certainly ok to use without mention} 
\noam{ It is footnote 2 currently on page 10. I don't think that readers will be confused. This kind of argument has been used often, so will be familiar. This terminology allows us to avoid notation which is not necessary. Not sure what CTT stands for.}
which may either output $0$ or $1$ (or diverge). The output of this process will be $J(e)$ for some $e$, and by the recursion theorem, we may assume we know $e$ in the definition of this process. Consider the $\Pi^0_1$ class $Q$ obtained from $P^*$ by removing not only all the strings $\tau$ of weight below $r+2\epsilon$ for which $\Gamma(\tau,e)\converge$, but also all strings majorizing such strings $\tau$:
% \[
% Q = \{h\in P^*\,\colon\, \neg(\exists\tau)[\;\tau\in [\s^*]_{<r+2\epsilon}\text{, }\tau\leq h\text{, and }\Gamma(\tau,e)\da\;]\}.
% \]
\[
Q = \{h\in P^*\,\colon\, (\forall\tau\le h)\,\,\tau\notin C\},
\]
where 
\[
	C= \set{\tau\in ( \id^{<\w}\cap [\s^*])_{<r+2\epsilon}}{\Gamma(\tau,e)\converge}.
\]
The point is that if $h\in Q$, $g\le h$, and $\s^*\prec g$, then $g\in Q$.\footnote{Note, however, that $Q$ is not quite the same as the class obtained by removing all $h$ for which there is a $g\le h$ in  $P^*_{\le r+2\epsilon}$ such that $\Gamma(g,e)\converge$. The class $Q$ is smaller, since a witness $\tau$ may not be extendible to an $h$-majorized $g$ of weight at most $r+2\epsilon$.}

\medskip

If $Q_{\le r+2\epsilon} \ne \emptyset$, then our partial computable process does not terminate. Suppose now  that $Q_{\le r+2\epsilon} = \emptyset$. This  is eventually effectively recognised, as $Q_{\le r+2\epsilon}$ is a $\Pi^0_1$ class effectively obtained from $e$. We then use the following:
 \begin{lem} \label{lem:compression_functions:the_set_C}
 	If $Q_{\le r+2\epsilon}=\emptyset$ then we can effectively find some $n<\w$ and some set $E\subseteq \id^{=n}$ such that:
 	\begin{enumerate}
	\item Every $\s\in E$ extends $\s^*$ and $\wt(\s)< r+2\epsilon$ (that is, $E\subseteq \id^{=n}\cap [\s^*]_{<r+2\epsilon}$);
	\item For every $\s\in E$ there is some $\tau\le \s$ (in particular $|\tau|\le |\s|$) in~$C$.
	\item If $\s\in E$, $\s'\in [\s^*]_{<r+2\epsilon}$ and $\s'\le \s$ then $\s'\in E$. 
	\item There is some $\s\in E$ such that $[\s]\cap P^*_{<r+\epsilon}\ne \emptyset$. 
\end{enumerate}
\end{lem}

\begin{proof}
	Let $S$ be a computable tree such that $[S]=P^*$; we may assume that if $\s\in S$, $\s'\le \s$ and $\s'\in [\s^*]$ then $\s'\in S$; this is because if~$\s$ is extendible in $P^*$ (meaning $[\s]\cap P^*\ne\emptyset$) then so is~$\s'$. \andre{ language too vague. what does delay mean?} \noam{Offending sentence removed. this is very straightforward.} Let $S_Q = \set{\s\in S}{(\forall\tau\le \s)\,\,\tau\notin C}$. Then $[S_Q]=Q$; since $Q_{\le r+2\epsilon}$ is empty, by compactness, we can find some~$n$ such that every sequence of length~$n$ in~$S_Q$ has weight $>r+2\epsilon$. We then let $E$ be the collection of sequences of length~$n$ in $S_{<r+2\epsilon}$. (4) holds because $P^*_{<r+\epsilon}$ is nonempty; some $\s$ of length~$n$ has an extension in~$P^*$ of weight $<r+\epsilon$; necessarily, $\s\notin S_Q$. 
\end{proof}

% By effective compactness, we can find an $n\in\w$ and a subset $E\subseteq \id^{n}$ such that:
% \begin{enumerate}
% 	\item Every $\s\in E$ extends $\s^*$ and $\wt(\s)< r+2\epsilon$ (that is, $E\subseteq \id^n\cap [\s^*]_{<r+2\epsilon}$);
% 	\item For every $\s\in E$ there is some $\tau\le \s$ (in particular $|\tau|\le |\s|$) in $[\s^*]_{<r+2\epsilon}$
% 	 such that $\Gamma(\tau,e)\converge$. 
% 	\item If $\s\in E$, $\s'\in [\s^*]_{<r+2\epsilon}$ and $\s'\le \s$ then $\s'\in E$. 
% 	\item There is some $\s\in E$ such that $[\s]\cap P^*_{<r+\epsilon}\ne \emptyset$. 
% \end{enumerate}
% Note that (4) holds because $P^*_{<r+\epsilon}\cap Q_{\le r+2\epsilon} = \emptyset$ implies that $P^*_{<r+\epsilon}\cap Q = \emptyset$, but by assumption, $P^*_{<r+\epsilon}$ itself is nonempty.

Having obtained~$E$, we let
\[
	\widehat E = E_{<r+\epsilon} = \set{\s\in E}{\wt(\s)< r+\epsilon}.
\]
Condition (4) says that $\widehat E$ is nonempty, indeed some $\s\in \widehat E$ is extendible in $P^*_{< r+\epsilon}$. Now an important point is that if $\s,\s'\in \widehat E$ then the pointwise minimum $\min{(\s,\s')}$ is in $E$, as both $\s$ and $\s'$ extend $\s^*$ and so $\wt(\s)-\wt(\s^*)<\epsilon$, and similarly for $\s'$. This allows us to show the following. For $i\in \{0,1\}$, let $C_i = \left\{ \tau\in C \,:\,  \Gamma(\tau,e) = i \right\}$; we assume that $\Gamma$ maps into $\{0,1\}$-valued functions, so $C = C_0\cup C_1$. 

\begin{lem} \label{lem:getting_one_colour}
	There is some $i\in \{0,1\}$ such that for every $\s\in \widehat E$ there is some $\tau \le \s$ in~$C_i$. 
\end{lem}

\begin{proof}
	For any pair $\s,\s'$ of strings from $\widehat E$, find some $\tau \le \min(\s,\s')$ in~$C$; let $c(\{\s,\s'\}) = \Gamma(\tau,e)$. By Lemma~\ref{lem:DumbRT}, there is a colour $i\in \{0,1\}$ such that for all $\s\in \widehat E$ there is a $\s'\in \widehat E$ such that $c(\{\s,\s'\}) = i$. (This is easy; if it fails for $0$, then a single $\s'$ witnesses it for $1$.) This colour~$i$ is as required.
\end{proof}

A colour~$i$ as given by Lemma~\ref{lem:getting_one_colour} is the output of the computable process just described.

% For any pair $\s,\s'$ of strings from $\widehat C$, find some $\tau \le \min(\s,\s')$ in~$C$; let $c(\{\s,\s'\}) = \Gamma(\tau,e)$ (which we assume is either $0$ or $1$).

% By Lemma~\ref{lem:DumbRT}, there is a colour $i\in \{0,1\}$ such that for all $\s\in \widehat C$ there is a $\s'\in \widehat C$ such that $c(\{\s,\s'\}) = i$. (This is easy; if it fails for $0$, then a single $\s'$ witnesses it for $1$.) This colour is the output of the computable process just described.

\medskip

We now describe the extension of $(\s^*,P^*,r+3\epsilon)$ in $D_\Gamma$. There are two cases. If $Q_{\le r+2\epsilon}$ is nonempty, then $(\s^*, Q,r+2\epsilon)$ is a condition, and $\Gamma(h,e)\diverge$ for all $h\in Q_{\le r+2\epsilon}$. We assume, then, that $Q_{\le r+2\epsilon}$ is empty. Let~$i$ be the outcome of the computable process described above. Let $\s\in \widehat E$ be extendible in $P^*_{<r+\epsilon}$; fix some $h\in P^*_{<r+\epsilon}$ with $\s\prec h$. Let $\tau \le \s$ in~$C_i$.

Let $R = P^*\cap [\tau]$. We claim that $R_{\le r+3\epsilon}$ is nonempty. For we can let $g = \tau\conc h\rest{[|\tau|,\infty)}$. Note that $g\le h$, so $g\in P^*$. And 
\[
\wt(g) =  %= \wt(\tau) + (\wt(h^*)-\wt(\s)) %%%% we could have |\tau|\le |\s|, but who cares
\wt(\tau) + (\wt(h)-\wt(h\rest{|\tau|}))
 \le \wt(\tau) + (\wt(h) - \wt(\s^*)) < (r+2\epsilon)+ \epsilon .\]
Thus $(\tau,R,r+3\epsilon)$ is a condition extending $(\s^*,P^*,r+3\epsilon)$. Every $h\in R$ extends $\tau$, so $\Gamma(h,e) = i = J(e)$. Therefore, $\Gamma(h)\notin \DNC_2$. 
\end{proof}

%%%%%%%%
%%%%%%%%
\section{The discrete covering property}
\label{sec:discrete}
%%%%%%%%
%%%%%%%%

%
% JOE (4/26/19): This section is done
%

In this section, we show that having the discrete covering property is equivalent to computing a $K$-compression function, and that such oracles compute slow growing DNC functions. Recall that we defined the discrete covering property in terms of sequences of subsets of $\omega$. For the first proof in this section, it is convenient to work with sequences $\bar A = \seq{A_n}$ of subsets of $2^{<\omega}$, which is a clearly a harmless translation. Similarly, in the second proof, we work with sequences of subsets of $\omega^{<\omega}$. 

\begin{prop}\label{prop:equiv}
An oracle $D$ computes a $K$-compression function if and only if it has the discrete covering property.
\end{prop}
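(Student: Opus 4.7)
The plan is to prove the equivalence by translating directly between $K$-compression functions and finite-weight covers, using the Kraft--Chaitin machine existence theorem in both directions.

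For the forward direction, I will start from a $D$-computable $K$-compression function $F$ and set $f(\sigma) = |F(\sigma)|$. Given a uniformly c.e.\ sequence $\bar A$ of finite weight, viewing $\bar A$ as a bounded request set and invoking Kraft--Chaitin together with the universality of $K$ produces a constant $c$ with $K(\sigma) \leq n+c$ for every $\sigma \in A_n$. I will then set $B_n = \{\sigma : f(\sigma) \leq n+c\}$. Each $B_n$ is finite because $F$ is injective with prefix-free range (so the Kraft inequality $\sum_\sigma 2^{-f(\sigma)} \leq 1$ forces finitely many $\sigma$ to have any given $f$-value), hence $\bar B$ is a uniformly $D$-computable sequence. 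The inclusion $A_n \subseteq B_n$ is immediate, and reversing the order of summation will give
\[
\wt(\bar B) \;=\; \sum_\sigma \sum_{n \geq f(\sigma) - c} 2^{-n} \;\leq\; 2^{c+1}\sum_\sigma 2^{-f(\sigma)} \;\leq\; 2^{c+1}.
\]

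For the backward direction, I will apply the covering property to the uniformly c.e.\ sequence $A_n = \{\sigma \in 2^{<\omega} : K(\sigma) \leq n\}$; this sequence has $\wt(\bar A) \leq 2$ because $\sum_\sigma 2^{-K(\sigma)} \leq 1$. Let $\bar B$ be a $D$-computable cover of finite weight, and define $f(\sigma) = \min\{n : \sigma \in B_n\}$. The minimum is attained because $\sigma \in A_{K(\sigma)} \subseteq B_{K(\sigma)}$; it is bounded by $K(\sigma)$; and it is $D$-computable via sequential search (certain to terminate by stage $K(\sigma)$). A rearrangement shows $\sum_\sigma 2^{-f(\sigma)} \leq \wt(\bar B) < \infty$. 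Feeding the $D$-c.e.\ request set $\{(\sigma, f(\sigma))\}$ into the $D$-relativised Kraft--Chaitin theorem will yield a $D$-computable prefix-free machine $M$, and selecting a shortest $M$-description for each $\sigma$ will give an injective prefix-free $F$ with $|F(\sigma)| \leq f(\sigma) \leq K(\sigma)$.

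The subtle issue in the backward direction is that $\sum_\sigma 2^{-f(\sigma)}$ is only known to be finite, not bounded by $1$, so a uniform shift large enough to restore the Kraft inequality as a hypothesis of Kraft--Chaitin would loosen the bound $f \leq K$ to $f \leq K + O(1)$. I expect to handle this either by selecting $\bar A$ so that any cover automatically has small enough weight (for instance by using a scaled-down version of the $K$-enumeration and checking that Kraft--Chaitin applies without a shift), or by first establishing the weaker statement that $D$ computes an injective prefix-free $F$ with $|F| \leq K + O(1)$ and then absorbing the additive constant via a direct manipulation of the $M$-descriptions, using that only finitely many $\sigma$ have $K(\sigma)$ below any fixed threshold.
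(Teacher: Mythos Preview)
Your proposal is correct and follows essentially the same route as the paper's proof. The one subtlety you flag---that $\sum_\sigma 2^{-f(\sigma)}$ is only finite, not at most~$1$---is dispatched in the paper by a finite alteration of~$f$ before invoking Kraft--Chaitin: since $\sum_\sigma 2^{-K(\sigma)}<1$ and $\sum_\sigma 2^{-f(\sigma)}<\infty$, one chooses a finite set~$S$ with $\sum_{\sigma\notin S}2^{-f(\sigma)}<1-\sum_\sigma 2^{-K(\sigma)}$ and redefines $f(\sigma)=K(\sigma)$ for $\sigma\in S$ (hard-coded, hence still $D$-computable), which keeps $f\le K$ and forces the total weight below~$1$. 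This is your second suggested fix, executed at the level of~$f$ rather than~$F$. Your first suggested fix does not work as stated: the discrete covering property places no bound on the weight of a cover beyond finiteness, so no choice of~$\bar A$ forces the cover's weight below~$1$.
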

\begin{proof}
The equivalence is straightforward. First, assume that $D$ has the discrete covering property. Let $A_n = \set{\sigma}{K(\sigma)\leq n}$, so $\bar A = \seq{A_n}$ is a uniformly c.e.\ sequence such that $\wt(\bar A) < 2$. Thus there is a $D$-computable sequence $\bar B = \seq{B_n}$ of finite weight that covers $\bar A$. Define a $D$-computable function $f$ as follow: let $f(\sigma)$ be the least $n$ such that $\sigma\in B_n$. This ensures that $f(\sigma)\leq K(\sigma)$ and $\wt(f) < \wt(B_n) < \infty$. Some finite alteration of $f$ is a $K$-bounded function with weight bounded by $1$, and an application of the Kraft--Chaitin theorem gives us a $D$-computable $K$-compression function.

For the other direction, assume that $F\colon 2^{<\omega}\to 2^{<\omega}$ is a $K$-compression function computable from $D$. Let $\bar A$ be a uniformly c.e.\ sequence of finite weight. Then there is a $c\in\omega$ such that $\sigma\in A_n$ implies that $K(\sigma)\leq n+c$. Hence $\sigma\in A_n$ implies that $|F(\sigma)|\leq n+c$. Define $B_n = \set{\sigma}{|F(\sigma)|\leq n+c}$, so $\bar B$ is a $D$-uniformly computable sequence that covers $\bar A$. Also,
\[
\wt(\bar B)\leq 2^{c+1}\sum_{\sigma\in 2^{<\omega}} 2^{-|F(\sigma)|}\leq 2^{c+1}.
\]
Therefore, $D$ has the discrete covering property.
\end{proof}
\begin{remark}\label{rmk:universal_instance_of_DCP}
Note that by this proof, the sequence $\bar A= \seq{A_n}$ given by $A_n = \set{\sigma}{K(\sigma)\leq n}$  is universal: if $D$ computes a cover for $\bar A$, it has the discrete covering property.
\end{remark} 
\andre{I think this is useful in view of universality for the CCP later}\noam{ Reference added later.}
Recall that an \emph{order function} is a computable, nondecreasing, unbounded function on $\omega$.

\begin{prop}\label{prop:sdnc}
 Let  $h\colon\omega\to\omega\smallsetminus \{0,1\}$ be any order function. Suppose an oracle   $D$ has  the discrete covering property. Then  $D$ computes an $h$-bounded DNC function. 
\end{prop}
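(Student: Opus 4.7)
The plan is to use Proposition~\ref{prop:equiv} to reduce to the $K$-compression setting: since $D$ has the discrete covering property, $D$ computes a $K$-compression function $F$; set $f = |F|$, so $f \leq K$ pointwise and $\sum_\sigma 2^{-f(\sigma)} \leq 1$, both $D$-computably. The goal becomes constructing a $D$-computable $g \in h^\omega$ satisfying $g(e) \neq J(e)$ whenever $J(e) \converge$. I would build~$g$ block-by-block: partition $\omega$ into successive finite blocks $B_1, B_2, \ldots$ tailored to the growth of~$h$, and for each~$k$ consider the finite product space $\Omega_k = \prod_{e \in B_k} \{0,\ldots,h(e)-1\}$ together with its ``bad'' subset $V_k = \{\tau \in \Omega_k : (\exists e \in B_k)\ \tau(e) = J(e)\}$. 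For each block I would locate a safe $\tau_k^* \in \Omega_k \setminus V_k$ using~$f$ and then concatenate the $\tau_k^*$'s to obtain~$g$.

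The key combinatorial observation is that any bad tuple $\tau \in V_k$ with a witness $e \in B_k$ (meaning $\tau(e) = J(e)$) admits a prefix-free description from the triple $(k,\ \text{position of } e \text{ within } B_k,\ \tau|_{B_k\setminus\{e\}})$, giving
\[
K(\tau) \leq 2\log k + \log|B_k| + \log(|\Omega_k|/h(e)) + O(1).
\]
If the block structure is chosen so that $\log h_{\min}(B_k) - \log|B_k| - 2\log k \to \infty$, then bad tuples satisfy $f(\tau) \leq K(\tau) \leq \log|\Omega_k| - r_k$ for some $r_k \to \infty$. Since $\sum_\tau 2^{-f(\tau)} \leq 1$ implies $|\{\tau : f(\tau) \leq M\}| \leq 2^M$, fewer than $|\Omega_k| \cdot 2^{-r_k}$ many tuples $\tau \in \Omega_k$ have $f(\tau) \leq \log|\Omega_k| - r_k$; so for large~$k$ a safe tuple $\tau_k^*$ with $f(\tau_k^*) > \log|\Omega_k| - r_k$ exists, and $D$ finds the first such tuple in a canonical ordering of $\Omega_k$ by querying~$f$.

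The hard part will be arranging the block partition so that the inequality $\log h_{\min}(B_k) - \log|B_k| - 2\log k \to \infty$ holds, for an arbitrary order function~$h$. Because~$h$ is unbounded, blocks with~$h_{\min}$ arbitrarily large certainly exist, but slow~$h$ admits long plateaus, so naive intervals force $|B_k|$ to swamp~$\log h_{\min}(B_k)$. The plan is to subdivide plateaus of~$h$ into shorter sub-blocks, balancing~$|B_k|$ against $\log h_{\min}(B_k)$ and the block index~$k$; the unboundedness of~$h$ guarantees that eventually enough ``room'' is available in~$\log h_{\min}$ to dominate the description overhead. Finally, the finitely many initial values of~$e$ with small $h(e)$ are handled separately: the set of valid initial segments in $\prod_{e < N}\{0,\ldots,h(e)-1\}$ is a nonempty $\Pi^0_1$-class in a finite space, and $D$ can compute a representative by applying the discrete covering property to the c.e.\ sequence of invalid initial segments.
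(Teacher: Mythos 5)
Your per-block counting argument is sound, but the block structure you need cannot be arranged for a slow order function~$h$; this is a genuine gap. The requirement $\log h_{\min}(B_k)-\log|B_k|-2\log k\to\infty$ forces $|B_k|\leq h_{\min}(B_k)/(2k^2)$ for all $k\geq k_0$. Since $h$ is nondecreasing, every block $B_j$ meeting $\{0,\dots,e\}$ has $\min B_j\leq e$, hence $h_{\min}(B_j)\leq h(e)$; summing over those blocks gives
\[
	e+1 \;\leq\; \sum_{j\,:\,B_j\cap\{0,\dots,e\}\ne\emptyset}|B_j| \;\leq\; O(1)\,+\,h(e)\sum_{j\geq k_0}\frac{1}{2j^2}\;=\;O(1)+O(h(e)),
\]
which is impossible whenever $h(e)=o(e)$, i.e.\ for any sublinear order function. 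The obstruction is intrinsic: a prefix-free identification of the block index~$k$ costs at least $\log k-O(\log\log k)$ bits, and this overhead cannot be dominated by $\log h_{\min}(B_k)$ when $h$ grows slowly, no matter how the blocks are chosen (scattered blocks fare no better by the same computation). Nor does the proposed special treatment of an initial segment rescue this, since for slow~$h$ the coordinates~$e$ with small $h(e)$ are not confined to a finite initial segment.

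The paper avoids a global counting argument entirely and instead argues by cases on the behaviour of the cover, in the spirit of Proposition~\ref{prop:mdom}. It fixes a computable~$g$ with $h(g(m))/2^m\to\infty$ and builds a uniformly c.e.\ array $\bar A$ driven by the enumeration of~$\emptyset'$: when~$k$ enters $\emptyset'$ at stage~$s$, initial segments (of lengths $g(m)$ for $m\leq s$) of a string that mimics~$J$ within the $h$-bound are placed into $A_{k+m+2}$. Given a $D$-computable finite-weight cover $\bar B$, either some fixed~$k$ has the property that every $B_{k+m+2}$ contains such a mimicking string of length $g(m)$, in which case the cardinality bound $|B_n|\leq 2^n$ leaves room to diagonalize inside the $h$-bound since $h(g(m-1))>2^{k+m+2}$ eventually; or else~$D$ can read off from~$\bar B$, for each~$k$, a stage past which~$k$ never enters~$\emptyset'$, so $D\geq_\Tur\emptyset'$ and hence trivially computes an $h$-bounded DNC function. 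That case split---the cover either supports direct diagonalization or hands~$D$ the halting problem---is the essential idea your proposal is missing.
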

\begin{proof}
  Fix an increasing computable function $g$ such that
\begin{equation}\label{eq:g-is-fast}
\lim_{m\to\infty} \frac{h(g(m))}{2^m} = \infty.
\end{equation}
We build a uniformly c.e.\ sequence $\bar A = \seq{A_n}$ of subsets of $h^{<\omega}$ as follows. If $k$ enters $\emptyset'$ at stage $s$, then find a $\tau\in h^{<\omega}$ of length $g(s)$ such that
\[
(\forall n<g(s))\; J_s(n)\downarrow<h(n) \implies \tau(n) = J_s(n).
\]
In other words, except that it must remain $h$-bounded, $\tau$ is trying to be an extension of $J$  at stage $s$. For each $m\leq s$, put $\tau\uh g(m)$ into $A_{k+m+2}$. We act for each $k$ at most once, so
\[
\wt(\bar A) < \sum_{k\in\omega}\sum_{m\in\omega} 2^{-k-m-2} = \sum_{k\in\omega} 2^{-k-1} = 1.
\]
Let $\bar B$ be a $D$-computable sequence of subsets of $h^{<\omega}$ such that $\wt(\bar B)$ is finite. By removing finitely many elements, we may assume that $\wt(\bar B)\leq 1$.

There are two cases, much like in the proof of Proposition~\ref{prop:mdom}. First, assume that there is a $k$ such that for all $m$, there is a $\tau\in B_{k+m+2}$ of length $g(m)$ such that
\[
(\forall n<g(m))\; J(n)\downarrow<h(n) \implies \tau(n) = J(n).
\]
Define a $D$-computable function $f\colon\omega\to\omega$ such that if $n\in [g(m-1),g(m))$, then $f(n)$ is different from $\tau(n)$ for every $\tau\in B_{k+m+2}$ of length $g(m)$. Our assumption guarantees that if $f(n)<h(n)$, then $f(n)\neq J(n)$. Note that there are at most $2^{k+m+2}$ elements of $B_{k+m+2}$, hence we can ensure that $f(n)\leq 2^{k+m+2}$ for all such $n$. But by \eqref{eq:g-is-fast}, we have $h(g(m-1)) > 2^{k+m+2}$ for all sufficiently large $m$. Therefore, $f(n)<h(n)$ for all sufficiently large $n$. By taking a finite modification of $f$, we get a $D$-computable $h$-bounded DNC function.

If the first case fails, then for every $k$ there is an $m$ and a $t$ such that
\[
(\forall \tau\in B_{k+m+2}\cap \omega^{g(m)})(\exists n<g(m))[\; J_t(n)\downarrow<h(n)\text{ but }\tau(n) \neq J_t(n) \;].
\]
Note that we can find such an $m$ and $t$ effectively from $\bar B$, hence from $D$. Let $s(k) = \max\{m,t\}$, so $s$ is a $D$-computable function. By the construction of $\bar A$, it cannot be the case that $k$ enters $\emptyset'$ after stage $s(k)$. Therefore, $\emptyset'\leq_\Tur D$, so $D$ computes a DNC$_2$ function (which is certainly $h$-bounded).
\end{proof}
\andre{changed notation to $J$ as elsewhere in the paper}\noam{ Good, thanks}

%%%%%%%%
%%%%%%%%
\section{The (strong) continuous covering property}
\label{sec:continuous}
%%%%%%%%
%%%%%%%%

%
% JOE (6/11/19): Still have to read the main proof
%

Recall from the introduction  that by definition, an oracle $D$ has the continuous covering property if   for every $\Pi^0_1$ class $P$ of positive measure, there is a $D$-computable tree $T\subseteq 2^{<\w}$ with no dead ends such that $\leb([T])>0$ and $[T]\subseteq P$. The main result of this section, and arguably of the paper, is that the continuous covering property does not imply PA-completeness. We also relate the continuous covering property to the discrete covering property and to $\High(\CR,\MLR)$. Everything we prove about the continuous covering property actually holds for an apparently stronger notion; see Definition~\ref{defn:strong} below.

\begin{prop}\label{prop:continuous-to-discete}
Every oracle that has the continuous covering property also has the discrete covering property.
\end{prop}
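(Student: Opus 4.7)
The plan is to encode a given uniformly c.e.\ sequence $\bar A = \seq{A_n}$ of finite weight into a $\Sigma^0_1$ class $U \subseteq 2^\omega$ with $\lambda(U) < 1$, apply the continuous covering property of $D$ to obtain an open $V \supseteq U$ with $\lambda(V) < 1$ and $S_V \le_\Tur D$, and then read off a $D$-computable cover $\bar B = \seq{B_n}$ of $\bar A$ of finite weight from $S_V$. By Remark~\ref{rmk:universal_instance_of_DCP}, it is enough to treat the universal instance $A_n = \{\sigma : K(\sigma) \le n\}$, which satisfies $\wt(\bar A) < 2$.

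For the encoding, I would fix a computable injection $\pi\colon 2^{<\omega} \to \omega$ with $\sum_\sigma 2^{-\pi(\sigma)} < \infty$ (for instance, $\pi(\sigma) = 2|\sigma| + 2$) and, using the Kraft inequality with a sufficiently large constant $c$, choose pairwise incomparable strings $\tau_{\sigma, n} \in 2^{<\omega}$ of length $n + \pi(\sigma) + c$. Setting $U = \bigcup_{(\sigma, n) : \sigma \in A_n} [\tau_{\sigma, n}]$, pairwise disjointness of the intervals and a rearrangement yield $\lambda(U) \le 2^{1-c} \sum_\sigma 2^{-K(\sigma)}\, 2^{-\pi(\sigma)} < 2^{1-c}$, which is less than $1$ for $c \ge 2$. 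Applying the continuous covering property then produces $V$ and $S_V$ as required.

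For the decoding, I would take $B_n = \{\sigma : \tau_{\sigma, n} \in S_V\}$. Uniform $D$-computability of $\bar B$ is immediate. The covering condition $A_n \subseteq B_n$ is automatic as well: if $\sigma \in A_n$, then $[\tau_{\sigma, n}] \subseteq U \subseteq V$, so $\tau_{\sigma, n} \in S_V$, so $\sigma \in B_n$.

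The main obstacle, and the real work in the argument, is to establish $\wt(\bar B) < \infty$. Pairwise disjointness of the $[\tau_{\sigma, n}]$'s yields the bound $\sum_{(\sigma, n) : \sigma \in B_n} 2^{-n - \pi(\sigma) - c} \le \lambda(V) < 1$, which controls a \emph{weighted} sum rather than the literal $\sum_n 2^{-n}|B_n|$ we need. The worry is that $V$ could contain a short prefix common to many $\tau_{\sigma, n}$'s at some level $n$, causing $|B_n|$ to be infinite. Handling this properly requires either choosing the $\tau_{\sigma, n}$'s so that no short common prefix exists for each fixed level, or, more robustly, refining the definition of $B_n$ by exploiting the $D$-computable structure of $S_V$: since the total $\lambda$-mass of cones that $V$ can ``swallow'' is bounded by $\lambda(V) < 1$, the set of bad levels is small enough (in a $D$-computable, weighted sense) to be absorbed into the finite weight of $\bar B$. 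Carrying out this bookkeeping is the main technical step.
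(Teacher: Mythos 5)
Your encoding via pairwise disjoint cones $[\tau_{\sigma,n}]$ of length $n + \pi(\sigma) + c$ cannot be made to work, and the bookkeeping you flag as ``the main technical step'' is not a matter of care but of impossibility. The trouble is structural: disjointness forces the cones to have measure $2^{-n-\pi(\sigma)-c}$, so the bound $\lambda(V) < 1$ only controls $\sum_{(\sigma,n)\colon \sigma\in B_n} 2^{-n-\pi(\sigma)-c}$, whereas you need $\sum_{(\sigma,n)\colon \sigma\in B_n} 2^{-n}$. These differ by the unbounded factor $2^{\pi(\sigma)}$, and no choice of $\tau_{\sigma,n}$ fixes this: since $\sum_\sigma 2^{-n-\pi(\sigma)-c}$ is bounded even as $\sigma$ ranges over all strings, the covering set $V$ can legitimately contain \emph{all} of the cones $[\tau_{\sigma,n}]$ at a single level $n$ while keeping $\lambda(V) < 1$, making $B_n$ infinite and $\wt(\bar B) = \infty$. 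Neither ``avoiding short common prefixes'' nor exploiting the $D$-computable structure of $S_V$ rescues the argument, because the adversary need not cheat via shared prefixes at all.

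The paper's proof replaces disjoint cones with an array of \emph{independent} clopen sets $C_{n,k}$ of measure exactly $2^{-n}$. Independence, not disjointness, is the key idea. Setting $U = \bigcup\{C_{n,k} : k \in A_n\}$ gives $\lambda(U) = 1 - \prod_n\prod_{k\in A_n}(1-2^{-n}) < 1$ precisely because $\wt(\bar A) < \infty$; and conversely, $B_n = \{k : C_{n,k} \subseteq V\}$ satisfies $1 - \prod_n\prod_{k\in B_n}(1-2^{-n}) \le \lambda(V) < 1$, which is \emph{equivalent} to $\wt(\bar B) < \infty$. The independent events of measure $2^{-n}$ give a tight, two-way dictionary between measure and weight, whereas your small disjoint cones give only a one-way bound that loses too much. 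You should replace your encoding by the independent-events one; the rest of your outline (cover $U$ by $V$, read off $B_n$ from $S_V$) then goes through.

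Two minor remarks. First, there is no need to specialize to the universal instance $A_n = \{\sigma : K(\sigma) \le n\}$; the argument works uniformly for any $\bar A$ of finite weight, and restricting to the universal instance buys you nothing here. Second, the paper treats $A_n \subseteq \omega$ directly, which is the natural setting; identifying $\sigma \in 2^{<\omega}$ with an index $k \in \omega$ is a harmless translation, but the injection $\pi$ you introduce is then doing no real work and only obscures the weight comparison.
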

\begin{proof}
Let $\seq{C_{n,k}}$ be a computable array of independent clopen subsets of $2^\omega$ (each given canonically) such that $\leb(C_{n,k}) = 2^{-n}$ for all $k$. For an sequence $\bar A = \seq{A_n}$ of subsets of $\omega$ such that $\wt(\bar A) < \infty$. Consider the $\Sigma^0_1$ class
\[
U =  \bigcup\; \set{C_{n,k}}{k\in A_n}.
\]
Note that
\[
\leb(U) = 1 - \prod_{n\in\omega}\prod_{k\in A_n} (1 - 2^{-n}).
\]
But $\prod_{n\in\omega}\prod_{k\in A_n} (1 - 2^{-n}) > 0$ if and only if $\wt(\bar A)=\sum_{n\in\omega}\sum_{k\in A_n} 2^{-n} < \infty$. Therefore, $\leb(U) < 1$.

If $D$ has the continuous covering property, then there is an open set $V\supseteq U$ such that $\leb(V)<1$ and $S_V = \set{\sigma\in 2^{<\omega}}{[\sigma]\subseteq V}$ is $D$-computable. Let $B_n = \set{k}{C_{n,k}\subseteq V}$. Then $\bar B = \seq{B_n}$ is a $D$-computable cover of $\bar A$. All that remains is to prove that $\bar B$ has finite weight. But
\[
1 - \prod_{n\in\omega}\prod_{k\in B_n} (1 - 2^{-n}) \leq \leb(V) < 1,
\]
so $\prod_{n\in\omega}\prod_{k\in B_n} (1 - 2^{-n}) > 0$. This implies that $\wt(\bar B) = \sum_{n\in\omega}\sum_{k\in B_n} 2^{-n} < \infty$.
\end{proof}

%:
 Note that nothing prevents $[T]$  in the definition of the continuous covering property from having intervals in which it is nonempty but has measure zero (or even from having isolated paths). It is convenient to work with an apparently stronger notion in which such intervals are explicitly forbidden.

\begin{defn}\label{defn:strong}
We say that $D$ has the \emph{strong continuous covering property} if for every $\Pi^0_1$ class $P$ of positive measure, there is a $D$-computable tree $T\subseteq 2^{<\w}$ such that % $\leb([T])>0$, 
$T\ne\emptyset$, $[T]\subseteq P$, and for all $\s\in T$, $\leb([T]\cap [\s])>0$.
\end{defn}

One reason that the \emph{strong} continuous covering property is convenient is that we can show that there is a ``universal'' $\Pi^0_1$ class for this property (compare with Remark~\ref{rmk:universal_instance_of_DCP}). Let $U$ be the first component of the standard universal Martin-L\"of test, i.e., the test obtained by combining all Martin-L\"of tests. So $\leb(U)<1$ and for any Martin-L\"of test $\seq{V_n}$ there is an $n$ such that $V_n\subseteq U$. Let $\PP$ be the complement of $U$, so it is a positive measure $\Pi^0_1$ class. The following lemma states that having the strong continuous covering property for $\PP$ is enough to ensure the strong continuous covering property in general.

For any $W\subseteq 2^\omega$ and $\sigma\in 2^{<\omega}$, let $W|\s = \set{X\in 2^\w}{\s\conc X\in W}$.

\begin{lem}\label{lem:universal}
Suppose $T$ is a nonempty tree such that
 % $\leb([T])>0$, 
 $[T]\subseteq \PP$, and for all $\s\in T$, $\leb([T]\cap [\s])>0$. Then  $T$ has the strong continuous covering property. 
\end{lem}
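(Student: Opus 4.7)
The strategy is to exploit the universality of~$U$ via a Martin-L\"of test tailored to any given $\Pi^0_1$ class~$P$ of positive measure. The idea is to construct a uniformly $\Sigma^0_1$ test $\{V_n\}$ with $\leb(V_n)<2^{-n}$ such that failing to lie in $V_n$ provides a ``decoding'' into~$P$; by universality some $V_{n_0}\subseteq U$, and since $[T]\subseteq\PP=2^\w\setminus U$ avoids $V_{n_0}$, every element of~$[T]$ decodes into an element of~$P$, from which we can read off a $T$-computable tree~$S$ with the desired properties.

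To build the test, fix (non-uniformly in~$P$) some~$K$ with $\leb(P)>2^{-K}$ and, for each~$n$, an integer $m_n$ large enough that $(1-2^{-K})^{m_n}<2^{-n}$. Consider the computable homeomorphism $2^\w\to(2^\w)^{m_n}$ by interleaving: for $X\in 2^\w$, let $X^i(j)=X(m_n\cdot j+i)$ for $i<m_n$, and write $\pi_i$ for the projection $X\mapsto X^i$, which pushes the Lebesgue measure forward to itself. Set
\[V_n=\set{X\in 2^\w}{X^i\notin P\text{ for all }i<m_n}.\]
Each $V_n$ is $\Sigma^0_1$ uniformly in~$n$ with $\leb(V_n)\le(1-\leb(P))^{m_n}<2^{-n}$, so $\{V_n\}$ is a Martin-L\"of test; universality supplies some $n_0$ with $V_{n_0}\subseteq U$. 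Writing $m=m_{n_0}$, we conclude that for every $X\in[T]$, at least one of $X^0,\ldots,X^{m-1}$ lies in~$P$.

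Let $T_P$ be a computable tree with $[T_P]=P$. The natural first attempt is to define~$S$ as the set of $\tau\in T_P$ such that $\tau=\pi_i(\sigma)$ for some $\sigma\in T$ of length $m|\tau|$ and some $i<m$; this is $T$-computable, contained in~$T_P$ (so $[S]\subseteq P$), and nonempty. The main obstacle, and technical heart of the argument, is the positive-measure condition $\leb([S]\cap[\tau])>0$ for every $\tau\in S$. For each $\s\in T$, the hypothesis $\leb([T]\cap[\s])>0$ combined with $[T]\cap V_{n_0}=\emptyset$ and a pigeonhole over the $m$ coordinates produces some $i_\s<m$ with $\leb(\{Y\in[T]\cap[\s]:Y^{i_\s}\in P\})>0$; applying the pushforward property $\leb(\pi_{i_\s}(A))\ge\leb(A)$ then yields positive measure of $[S]$ below $\pi_{i_\s}(\s)$. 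The delicate step is to arrange the choices of $i_\s$ coherently enough along~$T$ --- for instance, by thinning the naive~$S$ to retain only those $\tau$'s witnessed by a $\sigma$ and an~$i$ for which this positive-measure slice genuinely projects through~$\tau$ --- while keeping~$S$ $T$-computable rather than merely $\Pi^0_1(T)$. Once this is done, closedness of~$P$ yields $[S]\subseteq P$ automatically, and the positive-measure property follows from the lower bounds on each selected slice.
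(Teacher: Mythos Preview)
Your test construction via interleaving is perfectly valid and is a reasonable alternative to the paper's test, but the proof has a genuine gap exactly where you flag it: the ``delicate step'' is not carried out. Deciding, for a given $\sigma\in T$ and $i<m$, whether the slice $\{Y\in[T]\cap[\sigma]:Y^i\in P\}$ has positive measure is a $\Pi^0_2(T)$ question, not a $T$-computable one, so the thinning you describe cannot be done as stated. Your naive $S$ can easily have dead ends (a witnessing pair $(\sigma,i)$ for $\tau\in S$ need not admit any $X\in[T]\cap[\sigma]$ with $X^i\in P$), and you give no mechanism for removing them computably while preserving positive measure above every surviving node. Saying ``once this is done'' is precisely the missing idea.

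The paper avoids this coherence problem entirely by using a different, concatenation-based test and working dually. Given $V=2^\omega\setminus P=[S]$ with $S$ prefix-free, the iterates $[S^n]$ (where $S^{n+1}=\{\sigma\conc\tau:\sigma\in S^n,\tau\in S\}$) form a Martin-L\"of test, so $[S^n]\subseteq U\subseteq W:=2^\omega\setminus[T]$ for some $n$; taking the least such $n$ and any $\sigma\in S^{n-1}$ with $\sigma\in T$, the required subtree of $P$ is simply $\{\tau:\sigma\conc\tau\in T\}$, i.e., a \emph{shift} of $T$ itself. The hypothesis ``$\leb([T]\cap[\rho])>0$ for all $\rho\in T$'' transfers immediately to the shifted tree, so the positive-measure-above-every-node condition comes for free---no thinning, no coordinate selection. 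This is the key idea your approach lacks: with concatenation the solution is structurally a copy of $T$, whereas with interleaving you must stitch together projections, and there is no evident $T$-computable way to do so.
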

\begin{proof}
We work in the dual setting, with $\Sigma^0_1$ classes. Let $T$ be a tree as described in the statement of the lemma. Let $W$ be the %$\Sigma^0_1$ 
open class generated by $2^{<\omega}\smallsetminus T$. So $W$ is $T$-computable, $\leb(W)<1$, and $U\subseteq W$, where $U = 2^{\omega}\smallsetminus\PP$ is the first component of the standard universal Martin-L\"of test, as above. Also, we have
\[
2^{<\omega}\smallsetminus T = \set{\s\in 2^{<\w}}{[\s]\subseteq W} = \set{\s\in 2^{<\w}}{\leb(W|\s)=1}.
\]

Let $V$ be a $\Sigma^0_1$ class with $\leb(V)<1$. Let $S\subseteq 2^{<\omega}$ be a prefix-free c.e.\ set of strings such that $V=[S]$. Define $S^n$ recursively, as usual: let $S^0 = \{\seq{}\}$ and define $S^{n+1}$ to be $\set{\s\conc \tau}{\s\in S^n\text{ and }\tau\in S}$. It is straightforward to check that $\leb([S^n]) = \left(\leb(V)\right)^n$, so an effective subsequence of $\seq{[S^n]}$ forms a Martin-L\"of test. Therefore, there is an $n$ such that $[S^n]\subseteq W$. Let $n$ be the least such; $n>0$ since $\leb(W)<1$. Let $\s$ be a string witnessing that $[S^{n-1}]\nsubseteq W$, i.e., $\s\in S^{n-1}$ and $[\s]\nsubseteq W$. Now consider $W|\s$. We have that $[\s]\nsubseteq W$ implies that $\leb(W|\s)<1$, and $[S^n]\subseteq W$ implies that $V = [S]\subseteq W|\s$. Note that
\[
\{\tau\,:\, [\tau]\subseteq W|\s\} = \{ \tau\,:\, [\s\conc\tau]\subseteq W\}
\]
is $T$-computable. Finally, if $\leb((W|\s)|\tau) = \leb(W|\s\conc\tau)=1$, then $[\s\conc\tau]\subseteq W$, hence $[\tau]\subseteq W|\sigma$. Since $V$ was an arbitrary $\Sigma^0_1$ class with $\leb(V)<1$, we have proved that $\deg_\Tur(T)$ has the strong continuous covering property.
\end{proof}

\begin{prop}\label{prop:high-continuous}
Every oracle $D$  in $\High(\CR,\MLR)$ has the strong continuous covering property. 
\end{prop}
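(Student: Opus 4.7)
The plan is to use Lemma~\ref{lem:universal} to reduce the task to the single universal class~$\PP$, and then to exploit the single $D$-computable martingale provided by Proposition~\ref{prop:single-martingale}. Given $D\in\High(\CR,\MLR)$, Proposition~\ref{prop:single-martingale} supplies a $D$-computable martingale~$N$ that succeeds on every non-Martin-L\"of-random sequence; by a standard approximation I may assume that~$N$ takes dyadic rational values, and by rescaling that $N(\emptyset)=1$.

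The candidate tree is $T = \{\s\in 2^{<\w} : N(\tau)<c \text{ for all }\tau\preceq\s\}$, where $c>1$ is any fixed rational (say $c=2$). Since~$N$ is dyadic-rational-valued and $D$-computable, $T$ is $D$-computable; it is clearly closed under prefixes, and it contains~$\emptyset$. Moreover, any $X\in[T]$ has $N$ bounded by~$c$ along its initial segments, so~$N$ does not succeed on~$X$; by the defining property of~$N$, such an $X$ must be Martin-L\"of random, and therefore lies in $\MLR\subseteq \PP$. Hence $[T]\subseteq \PP$, and by Lemma~\ref{lem:universal} it will suffice to show that $T$ is ``fat'' in the sense that $\leb([T]\cap[\s])>0$ for every $\s\in T$.

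The real content of the proof is this last positive-measure statement, and the tool is Ville's inequality. Applied to the shifted martingale $\tau\mapsto N(\s\conc\tau)$, whose initial value is $N(\s)<c$, Ville's inequality gives that the relative measure (inside $[\s]$) of those $X$ on which~$N$ ever reaches~$c$ above level $|\s|$ is at most $N(\s)/c<1$. Consequently the set of $X\in [\s]$ along which~$N(X\uh n)$ stays strictly below~$c$ for all $n\ge |\s|$ has relative measure at least $1-N(\s)/c>0$; since the prefixes of~$\s$ already have $N<c$ by membership of $\s$ in~$T$, this set is contained in $[T]\cap[\s]$. The one subtlety worth flagging as the main obstacle is why I use strict inequality in the definition of~$T$: with $N(\tau)\leq c$ instead, boundary nodes with $N(\s)=c$ would sit in~$T$ yet give a vacuous Ville bound, breaking fatness. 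Working with strict inequality, together with the dyadic-rational assumption on~$N$ that keeps $T$ decidable, resolves this cleanly and completes the verification.
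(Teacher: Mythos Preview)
Your argument has a genuine gap at the step ``$X$ must be Martin-L\"of random, and therefore lies in $\MLR\subseteq \PP$.'' The inclusion $\MLR\subseteq \PP$ is false, and necessarily so: $\PP$ is a $\Pi^0_1$ class with $0<\leb(\PP)<1$, so its complement $U$ is a $\Sigma^0_1$ class of positive measure and hence must contain some Martin-L\"of random sequences. Equivalently, $\PP\subsetneq\MLR$. In fact your tree visibly cannot sit inside $\PP$: by Ville, $\leb([T])\ge 1-N(\emptyset)/c = 1-1/c$, which for $c=2$ already exceeds $\leb(\PP)$ in general. So from ``$N$ does not succeed on $X$'' you get $X\in\MLR$, but that is strictly weaker than $X\in\PP$, and Lemma~\ref{lem:universal} demands the latter.

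This is exactly the obstacle the paper's proof works around. It forms the same open set $V$ (sequences along which $N$ reaches the threshold), and uses the same Ville estimate you use to get $\leb(V|\sigma)\leq N(\sigma)<1$ whenever $\sigma$ stays below the threshold. But instead of claiming $U\subseteq V$, the paper iterates through the prefix-free generators $S$ of $U=[S]$: starting from the empty string, it repeatedly appends a $\tau\in S$ while staying outside $V$. If this process never halts, the resulting path lies in $\bigcap_n[S^n]$ (hence is non-ML-random) yet $N$ fails to succeed on it, contradicting the choice of $N$. So it must halt at some $\sigma_n$ with $[\sigma_n]\nsubseteq V$ but $[\sigma_n\conc\tau]\subseteq V$ for every $\tau\in S$; this yields $U\subseteq V|\sigma_n$ with $\leb(V|\sigma_n)<1$, and then Lemma~\ref{lem:universal} applies to the tree complementary to $V|\sigma_n$. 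Your Ville computation and the decidability of the threshold set are correct and are used here too; what is missing from your proposal is precisely this shifting step that passes from ``$N$ succeeds on non-ML-randoms'' to an honest containment of $U$ inside (a shift of) $V$.
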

\begin{proof}
By Proposition~\ref{prop:single-martingale}, there is a $D$-computable martingale $N$ that succeeds on all non-ML-random sequences. We may assume that   $N(\seq{})<1$. Let $Q$ be the set of minimal strings $\sigma$ with $N(\s)\geq 1$ and let $V = [Q]$. \andre{really the same notation for opposites? I prefer $[Q]^\preceq$ } \noam{ Should be clear from the context. We used it above.} Note that if $(\forall \tau\preceq\sigma)\; N(\tau)<1$, then $[\sigma]\nsubseteq V$, and in fact $\leb(V|\sigma)\leq N(\sigma)<1$. Thus
\[
\set{\sigma\in 2^{<\omega}}{[\sigma]\subseteq V} = \set{\sigma\in 2^{<\omega}}{(\exists\tau\preceq\sigma)\; N(\tau)\geq 1},
\]
which is $D$-computable.

Let $U = 2^{<\omega}\smallsetminus\PP$ be the first component of the standard universal Martin-L\"of test and fix a prefix-free set $S\subseteq 2^{<\omega}$ such that $U=[S]$. We attempt to build a sequence of strings $\sigma_0\prec\sigma_1\prec\sigma_2\prec\cdots$ as follows. Let $\sigma_0 = \seq{}$. If $\sigma_n$ has been defined, it must be the case that $[\sigma_n]\nsubseteq V$. If possible, pick a $\tau\in S$ such that $[\sigma_n\conc\tau]\nsubseteq V$ and let $\sigma_{n+1} = \sigma_n\conc\tau$.

If $\sigma_n$ exists for every $n$, then let $X = \bigcup_{n\in\omega}\sigma_n$. Note that $X\notin V$, so $N$ does not succeed on $X$; in fact, it never reaches $1$. On the other hand, $X\in \bigcap_{n\in\omega} [S^n]$, so it is not Martin-L\"of random. This contradicts the choice of $N$.

Therefore, there is an $n$ such that $\sigma_n$ is defined, but $\sigma_{n+1}$ is not. So $[\sigma_n]\nsubseteq V$, but for every $\tau\in S$, we have $[\sigma_n\conc\tau]\subseteq V$. This means that $U\subseteq V|\sigma_n$. We also have that $\leb(V|\sigma_n)<1$. Finally, if $[\rho]\nsubseteq V|\sigma_n$, then it must be the case that $\leb((V|\sigma_n)|\rho) = \leb(V|\sigma_n\conc\rho) \leq N(\sigma_n\conc\rho) < 1$. Therefore, Lemma~\ref{lem:universal} tells us that $D$ has the strong continuous covering property. 
\end{proof}

\begin{thm}\label{thm:main-separation}
There is an oracle $D$ with the strong continuous covering property that does not have PA degree.
\end{thm}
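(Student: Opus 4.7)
The plan is to produce $D$ via a forcing argument modelled on the proof of Theorem~\ref{thm:K-bounded}. By Lemma~\ref{lem:universal}, it suffices to build an oracle that computes a subtree $T\subseteq 2^{<\w}$ with $[T]\subseteq\PP$ and $\leb([T]\cap[\s])>0$ for every $\s\in T$, and which itself has no PA degree. We take $D=T$ (the characteristic function of the tree) and construct $T$ directly.

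The forcing conditions are tuples $(t,P,\vec\epsilon\,)$ where $t\subseteq 2^{<\w}$ is a finite tree committing the generic at levels $\leq n_t$; $\vec\epsilon$ assigns a positive rational $\epsilon_\s$ to each leaf $\s$ of~$t$; and $P$ is a $\Pi^0_1$ class of subtrees $S$ of $2^{<\w}$ satisfying $S\cap 2^{\leq n_t}=t$, $[S]\subseteq\PP$, and $\leb([S]\cap[\s])\geq\epsilon_\s$ for every leaf $\s$ of~$t$. We require $P$ to be closed under measure-preserving pruning: if $S\in P$, $t\subseteq S'\subseteq S$, and $\leb([S']\cap[\s])\geq\epsilon_\s$ for each leaf $\s$ of~$t$, then $S'\in P$. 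This closure plays the role of the pointwise-$\leq$ downward-closure used in the warm-up, and it is preserved under the analogue of pointwise minimum, namely subtree intersection, provided enough measure is maintained. Density arguments analogous to Lemmas~\ref{lem:silly_stuff} and~\ref{lem:lose_weight_now_ask_me_how} ensure that the generic $T$ is an infinite tree with $[T]\subseteq\PP$ and $\leb([T]\cap[\s])>0$ at every node.

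The core work is to show that for each Turing functional $\Gamma$, the set of conditions forcing $\Gamma(T)\notin\DNC_2$ is dense. As in the warm-up, we first pass to a condition with ``breathing room'', meaning $\leb([S]\cap[\s])$ is strictly larger than $\epsilon_\s$ for every $S\in P$. By the recursion theorem we reserve an index $e$ whose value $J(e)\in\{0,1\}$ we will define by a computable procedure. We examine the finite collection~$E$ of finite tree-extensions $\tau$ of~$t$ at some deeper level that are extendible in~$P$ and on which $\Gamma(\tau,e)$ converges. We apply Lemma~\ref{lem:DumbRT} for $k=2$ to the colouring $\tau\mapsto\Gamma(\tau,e)$, using the subtree intersection $\tau\cap\tau'$ of two tree-extensions as the analogue of the pointwise minimum $\min(\s,\s')$ from the warm-up. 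This yields a colour $i\in\{0,1\}$ and an extension $\tau^*\in E$ extendible in~$P$ with $\Gamma(\tau^*,e)=i$; setting $J(e)=i$ completes the diagonalisation.

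The main obstacle is the measure bookkeeping under the intersection operation: $S_1\cap S_2$ can lose measure relative to $S_1$ and~$S_2$, since we only have $\leb([S_1\cap S_2]\cap[\s])\geq 2\epsilon_\s-2^{-|\s|}$ when both $S_i$ satisfy the $\epsilon_\s$ bound, and this worsens under iterated intersections. To ensure that the intersections used in the Ramsey step remain in~$P$, the breathing-room extension must be chosen so that the measure densities $\leb([S]\cap[\s])/2^{-|\s|}$ are sufficiently close to~$1$. This is possible by the Lebesgue density theorem: deep inside~$\PP$ we can find strings~$\s$ with $\leb(\PP\cap[\s])/2^{-|\s|}$ arbitrarily close to~$1$, and the generic tree can be steered into such regions at every level. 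Making all the quantitative choices interlock correctly so that the condition survives both the breathing-room step and the Ramsey-style intersection step is the delicate technical part of the argument.
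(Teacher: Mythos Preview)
Your proposal is essentially the paper's proof: the same forcing with finite-tree conditions inside~$\PP$, the reduction via Lemma~\ref{lem:universal}, the recursion-theorem/Lemma~\ref{lem:DumbRT} diagonalisation with subtree intersection playing the role of pointwise minimum, and a breathing-room lemma. One small clarification: since $k=2$, only a \emph{single} pairwise intersection is ever taken in the Ramsey step, so your ``iterated intersections'' worry does not arise; the paper simply works with three thresholds $\bar r-\bar\epsilon > \bar r-2\bar\epsilon > \bar r-3\bar\epsilon$ (intersecting two trees from the top level lands above the middle level, and the final glued condition sits above the bottom), and the breathing-room extension need only push conditional densities above a fixed constant such as~$\nicefrac{2}{3}$, not arbitrarily close to~$1$.
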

\begin{proof}
The proof of this theorem is an elaboration on the proof of Theorem~\ref{thm:K-bounded}. We will build a tree~$T$ satisfying the hypothesis of Lemma~\ref{lem:universal}, which does not have PA degree. The tree~$T$ is built by forcing. In the previous proof, forcing conditions specified a finite initial segment~$\s$ of the $K$-compression function we built, a $\Pi^0_1$ class of possible extensions of $\sigma$, and a rational number~$q$ with the promise that the function that we eventually build will have weight at most~$q$. In the current construction, a forcing condition will specify: a finite initial segment of~$T$ (which we code by its set of leaves~$u$); a $\Pi^0_1$ class of possible extensions of~$u$ to trees $S\subseteq \PP$; and for each leaf $\s\in u$, a rational number~$q_\s$ with the promise that the measure of $T\cap [\s]$ is at least~$q_\s$. The structure of the proof is the same as before, but the combinatorial lemmas are more elaborate. We start with some terminology and notation. 

\smallskip

By a tree (\emph{arbre} in French) we mean a subset of $2^{<\w}$ closed under taking initial segments. Note that there is a 1-1 correspondence between closed subsets of $2^\w$ and subtrees of $2^{<\w}$ with no dead ends.   Let $\TT$ denote  the set of nonempty trees with no dead ends. Coding strings by numbers, $\TT$ itself is an effectively closed subset of Cantor space.
We will work with $\Pi^0_1$ classes of trees with no dead ends, namely, $\Pi^0_1$ subclasses of $\TT$
. 
To keep notational complexity in check, below, we ignore the difference between $[T]$ and $T$ (for $T\in \TT$) and write $T$ for both. Note that for $S,T\in \TT$, we have $S\subseteq T$ iff $[S]\subseteq [T]$. The operation of intersection is well defined; for $S,T\in \TT$, we let $S\cap T$ be the unique element $R$ of $\TT$ such that $[R] = [S]\cap [T]$, unless $[S]\cap[T]$ is empty, a case which we will avoid.

Infinite trees are built up of finite ones. Let $\TT_{<\w}$ be the collection of all nonempty finite subtrees of $2^{<\w}$. For $T\in \TT$ and $\vt\in \TT_{<\w}$, we say that $T$ \emph{extends} $\vt$ (and sometimes write $\vt\prec T$) %\hl{do we?} %%% we do now) 
if $\vt\subset T$ and every $\s\in T$ is comparable with a leaf of $\vt$. For each $\vt\in \TT_{<\w}$, we let $[\vt]$ be the collection of $T\in \TT$ that extend $\vt$. This is a clopen subset of $\TT$, and the collection of these sets generates the topology on $\TT$. We often restrict ourselves to trees of a fixed height; for $n<\w$, let $\TT_n$ be the set of finite trees all of whose leaves have length $n$. For $\vt\in \TT_{<\w}$ and $n$ greater than the height of $\vt$, we let $[\vt]_n$ be the set of trees $\vp\in \TT_n$  which extend $\vt$, again in the sense that each $\tau\in \vp$ extends some $\s\in \vt$ and each $\s\in \vt$ is extended by some $\tau\in \vp$; we write $\vt\preceq \vp$. Note that for $\vt,\vp\in \TT_{<\w}$, $\vt\preceq \vp$ if and only if $[\vp]\subseteq [\vt]$. 
% \andre{Does this mean compatible?} \noam{ Not sure what you mean by compatibility here; do we use this with respect to finite trees?} \andre{comment withdrawn}

We also implicitly use the bijection between $\TT_{<\w}$ and the collection of finite antichains of strings (a tree is mapped to its leaves). For example, for a finite antichain of strings $u$ we let $[u]$ be $[\vt]$ where $u$ is the set of leaves of $\vt$. A tree $\vt\in \TT_{<\w}$ and its set of leaves are both identified with the clopen subset of $2^{\w}$ determined by $\vt$. Thus for example, for a finite antichain $u$ of strings we let $\leb(u) = \sum_{\s\in u}2^{-|\s|}$. Similarly, for $T\in \TT$ and $\tau\in 2^{<\w}$ we let $T\cap \tau = \{\s\in T\,:\, \s\not\perp\tau\}$. 

\bigskip
Fix the $\Pi^0_1$ class $\PP$ from Lemma~\ref{lem:universal}. Our forcing conditions are triples $(u,P,\bar q)$ such that:
\begin{itemize}
	\item $u$ is a nonempty finite antichain of strings;
	\item $P\subseteq \TT$ is a $\Pi^0_1$ subclass of $[u]$ such that:
	\begin{itemize}
		\item for all $T\in P$ we have $T\subseteq \PP$; 
		% \item every $T\in P$ extends $\vt$, i.e.\ $T\cap 2^n = \vt$;
		\item if $T\in P$, $S\in [u]$ and $S\subseteq T$ then $S\in P$. 
	\end{itemize}
	\item $\bar q  = \seq{q_\s}_{\s\in u}$ is a sequence of positive rational numbers smaller than $2^{-|\s|}$, and 
	\[ P_{\ge \bar q} = \{ T\in P\,:\, (\forall\s\in u)\; \leb(T\cap\s) \ge q_\s \} \] is nonempty.
\end{itemize}
\andre{some intuition how this works would be helpful. We seem to assume it is enough to have gotten through the warmup}
\noam{ The warmup really does contain all the main ideas. I'm not sure I have much more intuition to provide...}
 \andre{something on how we get from strings to trees, that seems to be the main combinatorial addition}
 \noam{Paragraph added to the beginning of the proof.} \andre{OK helps}

If we let $P$ be the set of trees $T\in \TT$ such that $T\subseteq \PP$ and $q$ be any rational number smaller than $\leb(\PP)$, then $(\{\seq{}\},P,\seq{q})$ is a condition. So the set of conditions is nonempty. A condition $(v,R,\bar r)$ \emph{extends} a condition $(u,P,\bar q)$ if:
\begin{enumerate}
	\item $u\preceq v$;
	\item $R\subseteq P$; and
	\item for all $\s\in u$, we have $q_\s \le \sum \set{r_\tau}{\tau\in v \andd \tau\succeq \s}$.
\end{enumerate}
Note that if $u\preceq v$, then condition (3) is equivalent to 
$[v]_{\ge \bar r}\subseteq [u]_{\ge \bar q}$. In particular, we see that if a condition $(v,R,\bar r)$ extends a condition $(u,P,\bar q)$, then $R_{\ge \bar r} \subseteq P_{\ge \bar q}$. 

\medskip
Our first lemma is directly analogous to Lemma~\ref{lem:lose_weight_now_ask_me_how}.

\begin{lem}\label{lem:gain_weight_now_heres_a_cake}
Let $(u,P,\bar q)$ be a condition. Then 
\[
P_{>\bar q} =  \{ T\in P\,:\, (\forall \s\in u)\; \leb(T\cap \s) > q_\s \}
\]
is nonempty.
\end{lem}

\begin{proof}
Suppose not.  
Let $v$ be a $\subseteq$-maximal subset of $u$ for which there is some $T\in P_{\ge \bar q}$ with $\leb(T\cap \s)>q_\s$ for all $\s\in v$, and let~$T$ witness this. Let $\epsilon>0$ be rational smaller than $\leb(T\cap \s)-q_\s$ for all $\s\in v$, and let $q'_\s = q_\s+\epsilon$ for $\s\in v$ and $q'_\s = q_\s$ for $\s\in v- u$. Thus, $P_{\ge \bar q'}$ is nonempty, and for all $S\in P_{\ge \bar q'}$, for all $\s\in v-u$ we have $\leb(S\cap \s) = q_\s$. Choose any $\s\in v-u$, and let $Q = \{ S\cap \s\,:\, S\in P_{\ge \bar q'}\}$. Let $q = q_\s$. So $Q$ is a nonempty $\Pi^0_1$ subclass of $\TT$ and for all $T\in Q$, $T\subseteq \PP$ and $\leb(T) = q$.
 	
Let $V_n = \emptyset$ if $n\notin \emptyset'$, and otherwise let $V_n = \{ \s\conc 0^n\,:\, |\s|=s\}$ where $s$ is the stage at which $n$ enters $\emptyset'$. Since $\leb(V_n)\le 2^{-n}$ and $\seq{V_n}$ is uniformly c.e., for all sufficiently large $n$ we have $V_n \cap \PP = \emptyset$. 

Let $m<\w$. By compactness, we can effectively find some $t<\w$ and some $C\subseteq \TT_t$ such that $Q \subseteq \bigcup_{\vt\in C} [\vt]$ and such that $q/\leb(\vt) > 1-2^{-m}$ for all $\vt\in C$. We then claim that provided that $m$ is large enough, $m\in \emptyset'$ if and only if $m\in \emptyset'_t$. For fix some $\vt\in C$ such that $[\vt]\cap Q\ne \emptyset$, and fix some $T\in [\vt]\cap Q$. If $m$ enters $\emptyset'$ at stage $s>t$ then for every leaf $\s$ of $\vt$, $\leb(T|\s) \le \leb(\PP|\s) \le 1-2^{-m}$ and so $q=\leb(T) \le (1-2^{-m})\leb(\vt)$ which is not the case. This algorithm for computing $\emptyset'$ gives the desired contradiction.
\end{proof}

We will often use Lemma~\ref{lem:gain_weight_now_heres_a_cake} in conjunction with the following:

\begin{lem} \label{lem:extending_the_finite_part}
	Let $(u,P,\bar q)$ be a condition; let $v\succeq u$, and suppose that $S\succ v$ and $S\in P_{>\bar q}$. Then there is some $\bar p = \seq{p_\tau}_{\tau\in v}$ such that $(v,P\cap [v], \bar p)$ is a condition extending $(u,P,\bar q)$. 
\end{lem}

\begin{proof}
	For $\s\in u$, let $v_\s = \set{\tau\in v}{\tau\succeq \s}$. Choose rational $p_\tau$ for $\tau\in v$ so that $p_\tau \le \leb(S\cap \tau)$, and for all $\s\in u$, $\sum_{\tau\in v_\s} p_\tau \ge q_\s$; this is possible because $S\cap \s = \bigcup_{\tau\in v_\s} {S\cap \tau}$ and so $\sum_{\tau\in v_\s} \leb(S\cap \tau) = \leb(S\cap \s)> q_\s$. Then $S\in [v]\cap P_{\ge \bar p}$ so $(v,P\cap [v], \bar p)$ is indeed a condition as required. 
\end{proof}

For a filter $G$ of forcing conditions, we let $T_G$ be the downward closure of
\[ \bigcup u \Cyl{ (u,P,\bar q)\in G \text{ for some $P$ and $\bar q$}}.\]
We assume from now that $G$ is fairly generic.

\begin{lem}\label{lem:generic_tree_is_infinite}
	$T_G\in \TT$.
\end{lem}

\begin{proof}
	It suffices to show that for any condition $(u,P,\bar q)$, for all large~$n$, there is an extension $(v,Q,\bar p)$ of $(u,P,\bar q)$ such that every $\s\in v$ has length~$n$. 

	Let $(u,P,\bar q)$ be a condition, and let~$n> |\s|$ for all $\s\in u$. By Lemma~\ref{lem:gain_weight_now_heres_a_cake}, let $S\in P_{>\bar q}$. Let $v = S^{=n}$ be the collection of strings on~$S$ of length~$n$. Since $u\prec S$, we have $u\prec v$, and of course $v\prec S$; by Lemma~\ref{lem:extending_the_finite_part}, there is some~$\bar p$ such that $(v,P\cap [v],\bar p)$ is a condition extending $(u,P,\bar q)$. 
	%
	% let $T^*\in P_{> \bar q}$. Let $I = \{ i<2\,:\, \s\conc i \in T^*\}$ which is nonempty. Let
	% \[ u' = \left( u-\{\s\} \right) \cup \{ \s\conc i\,:\, i\in I\}.\]
	% Define $\bar q'$ by extending $\bar q$ but replacing $q_\s$ by $q_{\s\conc i}$ for $i\in I$, so that $q_{\s\conc i}\le \leb(T^*\cap(\s\conc i))$ and $\sum_{i\in I} q_{\s\conc i}\ge q_\s$. Then $(u', P\cap [u'], \bar q')$ is a condition extending $(u,P,\bar q)$ and $\s$ has a proper extension in $u'$. 
 \end{proof}

\begin{lem}\label{lem:stupid_stuff}
	Let $(u,P,\bar q)\in G$. Then $T_G\in P_{\ge \bar q}$. 
\end{lem}

\begin{proof}
	Let $\vt\prec T_G$; we can find some $(v,Q,\bar p)\in G$ extending $(u,P,\bar q)$ such that $\vt \preceq v$. Since $Q_{\ge\bar p}\subseteq  [v]\cap  P_{\ge \bar q}$, it follows that $[\vt]\cap P_{\ge \bar q}$ is nonempty. Since $P_{\ge\bar q}$ is closed, the lemma follows. 
\end{proof}

Let $\Gamma\colon \TT\to 2^\w$  be a Turing functional. Let $D_\Gamma$ be the set of conditions $(u,P,\bar q)$ such that $\Gamma(T)\notin \DNC_2$ for all $T\in P_{\ge \bar q}$. We show that $D_\Gamma$ is dense. 

First we prepare. The following is analogous to Lemma~\ref{lem:compression_functions:breathing_room}. We define the collection~$F$ of conditions that give us sufficient breathing room. Let $(v,P,\bar q)$ be a condition; for $\s\in v$, let $\epsilon_\s = (2^{-|\s|}-q_\s)/3$. We set $r_\s = 2^{-|\s|}$, so that we can write $\bar q = \bar r-3\bar \epsilon$. The condition $(v,P,\bar q)$ is in~$F$ if $P_{>\bar r-\bar \epsilon}$ is nonempty. 

\begin{lem} \label{lem:tree_preparation}
	The collection~$F$ of conditions is dense. 
\end{lem}
\andre{already for the K-version I thought why  not we can make F the forcing p.o.. What would happen to the lemma then?} \noam{ If you do not add the definition of a ``pre-condition'' (what we currently call a condition, but not necessarily in F), then you would need to add the proof of this lemma each time we produce conditions, e.g. to Lemma 5.7, Lemma 5.8, and the end of the proof.} \andre{ah ok thx}

\begin{proof}
	Let $(u,P,\bar q)$ be a condition. 
	By Lemma~\ref{lem:gain_weight_now_heres_a_cake}, let $T\in P_{>\bar q}$. Fix some $\s\in u$. Take a positive rational number $\delta_\s$ such that $6\delta_\s < \leb(T\cap \s) - q_\s$. Find some finite antichain $v_\s$ of extensions of $\s$ such that $v_\s\prec T\cap \s$ and further $\leb(v_\s)-\leb(T\cap \s)< \delta_\s$ (where we again identify $v_\s$ with the clopen subset of Cantor space it determines). 

	For $\tau\in v_\s$, let $\eta_\tau = r_\tau - \leb(T\cap \tau)$; so
	\begin{equation} \label{eqn:basic_eta}
			\sum_{\tau\in v_\s} \eta_\tau = \leb(v_\s) - \leb(T\cap v_\s) <\delta_\s,
	\end{equation}
	using the fact that $T\cap \s = T\cap v_\s$. 

	Let $u^*_\s = \left\{ \tau\in v_\s \,:\,  r_\tau - 3\eta_\tau > 0\right\}$. We aim to show that:
	\begin{equation}
		\label{eqn:goal}
		\sum_{\tau\in u^*_\s} (r_\tau-3\eta_\tau) > q_\s.
	\end{equation}
	If this is the case, then each $u^*_\s$ is nonempty; letting $u^* = \bigcup_{\s\in u} u^*_\s$, we would have $u\preceq u^*$. We can then choose, for each $\tau \in u^*$, a rational $\epsilon_\tau$ just slightly larger than~$\eta_\tau$, so that we still have $\epsilon_\tau < r_\tau/3$ and $\sum_{\tau\in u^*_\s}(r_\tau - 3\epsilon_\tau) > q_\s$ for each $\s\in u$. Then $(u^*,P\cap [u^*], \bar r-3\bar \epsilon)$ would be a condition extending $(u,P,\bar q)$; it would be a condition in~$F$, since $T^* = T\cap u^*$ witnesses that $(P\cap [u^*])_{>\bar r-\bar\epsilon}$ is nonempty: $T^*\subseteq T$ and so is in~$P$, and for $\tau\in u^*$ we have $\leb(T^*\cap \tau) = \leb(T\cap \tau) = r_\tau-\eta_\tau > r_\tau -\epsilon_\tau$.

	\smallskip
	
	Fix $\s\in u$. Toward showing \eqref{eqn:goal}, we note that by \eqref{eqn:basic_eta}, as $\sum_{\tau\in u^*_\s}\eta_\tau \le \sum_{\tau\in v_\s}\eta_\tau$, we have 
	\[
		\sum_{\tau\in u^*_\s} (r_\tau - 3\eta_\tau) > \leb(u^*_\s) - 3\delta_\s, 
	\]
	so it suffices to show that 
	\begin{equation}
		\label{eqn:the_measure_of_u_star}
		\leb(u^*_\s) \ge q_\s + 3\delta_\s. 
	\end{equation}

	Let $w_\s = v_\s \setminus u^*_\s = \left\{ \tau\in v_\s \,:\,  \leb(T | \tau) \le 2/3 \right\}$. Then
	\[
		\leb(u^*_\s)  + \leb(w_\s) = \leb(v_\s) \ge \leb(T\cap \s) > q_\s + 6\delta_\s; 
	\]
	so it suffices to show that 
	\begin{equation}
		\label{eqn:bound_on_w_sigma}
		\leb(w_\s) \le 3\delta_\s. 
	\end{equation}

	By definition, $\leb(T\cap w_\s) \le (2/3)\cdot \leb(w_\s)$. Now
	\begin{multline*}
		 \leb(u^*_\s) + (2/3)\cdot \leb(w_\s) + \delta_\s \ge \leb(T\cap u^*_\s) + \leb(T\cap w_\s)+ \delta_\s = \\ \leb(T\cap \s) + \delta_\s \ge \leb(v_\s) = \leb(u^*_\s) + \leb(w_\s); 
	\end{multline*}
	subtracting $\leb(u^*_\s)$ gives the desired result~\eqref{eqn:bound_on_w_sigma}. 
\end{proof}

Now fixing a condition $(u^*,P^*, \bar r - 3\bar \epsilon)$ in~$F$, we find an extension in~$D_\Gamma$. The main property we use is that for $S,T\in [u^*]_{>\bar r-\bar \epsilon}$, for all $\tau\in u^*$ we have $\leb(S\cap T\cap \tau)> r_\tau - 2\epsilon_\tau$, so $S\cap T \in [u^*]_{>\bar r - 2\bar \epsilon}$. We can now run the proof from above. 

% Of course what will be used is that $T\in \TT$ 
% is in $[\vp]_{\ge \bar s}$ if and only if for all sufficiently large $n$ we have $T\rest{2^{\le n}}\in 
% [\vp]_{\ge \bar s}$. 

\smallskip

We define a partial computable process which may output 0 or 1; by the recursion theorem, we obtain some~$e$ such that this output is $J(e)$. Let 
\[
	C = \left\{ \vs\in \TT_{<\w} \,:\,  \vs\in [u^*]_{>\bar r-2\bar \epsilon} \andd \Gamma(\vs,e)\converge \right\}, 
\]
and let
\[
	Q = \left\{ T\in P^* \,:\,  (\forall \vt \subset T)\,\,\vt\notin C \right\}.
\]
% We let~$Q$ be the set of $T\in P^*$ which are not removed by finding some $\vt\in \TT_{<\w}$ with $\vt\subset T$, $\vt\succeq u^*$, $\leb(\vt\cap \tau) > r_\s-2\epsilon_\tau$ for all $\tau\in u^*$, and such that $\Gamma(\vt,e)\converge$. Note that 
Here by $\vt\subset T$ we do mean the sets of strings, not the associated closed sets; and we do not require that $T\succ \vt$. 

Note that for all $T\in P^*$, since $T\subseteq \PP$, for all $\tau\in T$ we must have $\leb(T\cap \tau) < 2^{-|\tau|}$. Hence if $T\in [u^*]_{\ge \bar p}$ for some $\bar p$, then for all $\vt\succeq u^*$ with $\vt\prec T$, we must have $\vt\in [u^*]_{>\bar p}$. Hence, if $Q_{\ge \bar r-2\bar \epsilon}$ is nonempty, then $(u^*,Q,\bar r - 2\bar \epsilon)$ is an extension of $(u^*,P^*,\bar r-3\bar \epsilon)$ in $D_\Gamma$. We suppose then that~$Q_{\ge \bar r - 2\bar \epsilon}$ is empty.

As in Lemma~\ref{lem:compression_functions:the_set_C}, by compactness, we can find some $n<\w$ and a set $E\subseteq \TT_n$ such that:
 \begin{enumerate}
 	\item $E\subset [u^*]_{>\bar r -2\bar \epsilon}$;
 	\item For every $\vt\in E$ there is some $\vr\subseteq \vt$ in~$C$;
 	\item For every $\vt\in E$ and $\vr\subseteq \vt$ in $[u^*]_{>\bar r -2\bar \epsilon}$ we have $\vr\in E$;
 	\item There is some $\vt\in E$ such that $[\vt]\cap P^*_{>\bar r-\bar \epsilon}$ is nonempty.
 \end{enumerate}
The proof is the same; we let~$E$ be the set of $\vs\in \TT_n\cap [u^*]_{>\bar r-2\bar \epsilon}$ on a computable tree determining~$P^*$, for some~$n$ such that every $\vt\in \TT_n$ on a tree determining~$Q$ is in $[u^*]_{\le \bar r-2\bar \epsilon}$. 

We let $\widehat E = E \cap [u^*]_{>\bar r-\bar \epsilon}$. As observed above, if $\vt,\vr\in \widehat E$ then $\vt\cap \vr\in [u^*]_{>\bar r -\bar \epsilon}$ and so $\vt\cap\vr\in E$. As in the proof of Lemma~\ref{lem:getting_one_colour}, this shows that there is some $i\in \{0,1\}$ such that for every $\vt\in \widehat E$ there is some $\vs\subseteq \vt$ in $C_i = \left\{ \vs\in C \,:\,  \Gamma(\vs,e)=i \right\} $. As above, this~$i$ is the output of our computable process, so $J(e)=i$. 

Let $\vt\in \widehat E$ such that $[\vt]\cap P^*_{>\bar r - \bar \epsilon}$ is nonempty; fix some $T$ in that set. Find some $\vs\subseteq \vt$ in $C_i$. Let $S = T\cap \vs$. Note that $\vs\prec S$ because $\vs\subseteq \vt$. For all $\s\in u^*$, as $\leb(T\cap \s)> r_\s-\epsilon_\s$ and $\leb(\vs\cap \s) > r_\s - 2\epsilon_\s$, we have $\leb(S\cap \s)> r_\s-3\epsilon_\s$. In particular,~$S$ is infinite; as $S\subseteq T$, we have $S\in P^*$. Altogether, $S\in P^*_{>\bar r -3\bar \epsilon}$. By Lemma~\ref{lem:extending_the_finite_part}, as $\vs\succeq u^*$ and $S\succeq \vs$, there is some condition $(\vs,P^*\cap [\vs],\bar p)$ extending $(u^*,P^*,\bar r-3\bar \epsilon)$; this condition is in~$D_\Gamma$. 
% \hl{Blah blah blah do the same.} For the final step, suppose that $\Gamma(\vt,e)\converge = i$, with $\vt\in \TT_{<\w}$, $\vt\in[u^*]_{>\bar r -2\bar \epsilon}$ and $S\subset T$ for some $T\in P^*_{>\bar r -\bar \epsilon}$. Then $T\cap S\in P^*$ and $T\cap S\in [u^*]_{>\bar r - 3\bar \epsilon}$. 
\end{proof}

%%%%%%%%
%%%%%%%%
\section{(Weak) strong weak weak K{\H o}nig's lemma}
%%%%%%%%
%%%%%%%%

%
% JOE (6/11/19): Proofs to write
%

In this section, we study two (possibly equivalent) reverse mathematical principles strictly between \sfup{WKL} (weak K{\H o}nig's lemma) and \sfup{WWKL} (weak weak K{\H o}nig's lemma). Our principles correspond to the continuous covering property and its strong variant. We assume that the reader has some familiarity with reverse mathematics; see Simpson~\cite{S:09} for an introduction.

We say that a tree $T\subseteq 2^{<\omega}$ has \emph{positive measure} if there is a $\epsilon>0$ such that
\[
(\forall n)\; \frac{\#\set{\sigma\in T}{|\sigma|=n}}{2^n} > \epsilon.
\]
In the introduction, we defined \emph{strong weak weak K{\H o}nig's lemma} ({\sffamily\itshape SWWKL}):

\medskip
\hangindent=30pt
\noindent\hspace{30pt}%
If $T\subseteq 2^{<\omega}$ is a tree with positive measure, then there is a nonempty subtree $S\subseteq T$ such that if $\sigma\in S$, then $S$ has positive measure above $\sigma$.

\medskip
\noindent
In particular, note that \sfup{SWWKL} implies that $S$ is a \emph{perfect} subtree of $T$. Our second principle is the one corresponding to the continuous covering property: \emph{weak strong weak weak K{\H o}nig's lemma} ({\sffamily\itshape WSWWKL}):

\medskip
\hangindent=30pt
\noindent\hspace{30pt}%
If $T\subseteq 2^{<\omega}$ is a tree with positive measure, then there is a subtree $S\subseteq T$ of positive measure that has no dead ends.

\medskip
\noindent
We will prove, over $\sfup{RCA}_0$, that
\begin{center}
\vspace{4pt}
\begin{tikzpicture}[text centered, node distance=0.75cm, auto]
	\node (wkl) {\sfup{WKL}};
	\node[right=of wkl] (swwkl) {\sfup{SWWKL}};
	\node[right=of swwkl] (wswwkl) {\sfup{WSWWKL}};
	\node[right=of wswwkl] (wwkl) {\sfup{WWKL.}};

	\path ([yshift=2pt]wkl.east) edge[imp] ([yshift=2pt]swwkl.west)
		(swwkl) edge[imp] (wswwkl)
		([yshift=2pt]wswwkl.east) edge[imp] ([yshift=2pt]wwkl.west);
		
	\path ([yshift=-2pt]wwkl.west) edge[imp,->] node[yshift=6pt] {\tiny\bf/} ([yshift=-2pt]wswwkl.east)
		([yshift=-2pt]swwkl.west) edge[imp,->] node[yshift=6pt] {\tiny\bf/} ([yshift=-2pt]wkl.east);
\end{tikzpicture}
\end{center}
It is easy to see that \sfup{SWWKL} implies \sfup{WSWWKL}; we do not know whether the reverse implication holds. The remaining implications and non-implications are proved below in Propositions~\ref{prop:RM1}--\ref{prop:RM4}.

\begin{prop}\label{prop:RM1}
$\sfup{RCA}_0 + \sfup{WKL} \vdash \sfup{SWWKL}$.
\end{prop}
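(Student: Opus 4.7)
The plan is to apply $\sfup{WKL}$ to a suitable bounded $\Pi^0_1$ class whose members directly code witnesses for the conclusion of $\sfup{SWWKL}$. Let $T \subseteq 2^{<\omega}$ be a tree with positive measure, witnessed by a rational $\epsilon > 0$.

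I would consider the class $\mathcal{C}$ of pairs $(S, w)$, where $S \colon 2^{<\omega} \to \{0, 1\}$ is the characteristic function of a subset of $T$ and $w \colon 2^{<\omega} \to \omega$, satisfying: $\langle\rangle \in S$; $S$ is downward closed; and for every $\sigma$ with $S(\sigma) = 1$ and every $m \geq |\sigma|$,
\[
\#\{\tau : S(\tau) = 1, \; |\tau| = m, \; \tau \succeq \sigma\} \geq 2^{m - |\sigma| - w(\sigma)}.
\]
Each defining condition is $\Pi^0_1$ in $T$, so $\mathcal{C}$ is a $\Pi^0_1$ class; moreover, any $(S, w) \in \mathcal{C}$ gives $\leb([S] \cap [\sigma]) \geq 2^{-|\sigma| - w(\sigma)} > 0$ for every $\sigma \in S$, which is exactly what is required for $\sfup{SWWKL}$.

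To apply $\sfup{WKL}$ I would bound the $w$-coordinate by a computable function $\bar w \colon 2^{<\omega} \to \omega$, obtaining a bounded $\Pi^0_1$ subclass $\mathcal{C}_{\bar w}$ whose paths correspond to infinite branches of a $\Delta^0_1$ (in $T$) binary tree of finite consistent approximations. The key step will then be to verify, inside $\sfup{RCA}_0$, that this approximation tree is infinite. Classically the canonical witness is the trimmed tree $S_* = \{\sigma \in T : \leb([T] \cap [\sigma]) > 0\}$ paired with $w_*(\sigma) = \lceil |\sigma| - \log_2 \leb([T] \cap [\sigma]) \rceil$. Although $S_*$ is $\Sigma^0_2$-defined and not directly formable in $\sfup{RCA}_0 + \sfup{WKL}$, the measures $\leb([T] \cap [\sigma])$ themselves do exist in $\sfup{RCA}_0$ as limits of the monotone rational sequences $\#\{\tau \in T : |\tau| = n, \tau \succeq \sigma\}/2^{n - |\sigma|}$, and I would use this real-valued data to produce, level by level, consistent finite approximations of the form $(S\rest{2^{\leq N}}, w\rest{2^{\leq N}})$ satisfying the restrictions of the defining conditions below depth~$N$.

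The main obstacle will be choosing $\bar w$ computably from $T$ so that $\mathcal{C}_{\bar w}$ is provably nonempty: no computable function of $|\sigma|$ alone majorizes $w_*$ uniformly, since $\leb([T] \cap [\sigma])$ can be arbitrarily small and is not itself computable from $T$. I expect the resolution to involve either presenting $\mathcal{C}$ as an $\omega^\omega$-bounded $\Pi^0_1$ class whose bound is defined from the real-valued measure data (available in $\sfup{RCA}_0$), or restructuring the class so that the required bound arises from finitary counting inside $T$ at each level rather than from global measure information. Once nonemptiness of the approximation tree is secured, $\sfup{WKL}$ delivers a member $(S, w) \in \mathcal{C}_{\bar w}$, and the first coordinate $S$ is the desired subtree of $T$.
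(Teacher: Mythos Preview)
Your overall strategy---encode solutions as members of a $\Pi^0_1$ class and apply \sfup{WKL}---is right, and indeed the paper's proof is nothing more than a one-line appeal to the formalisation of ``every PA degree has the strong continuous covering property''. But your proposal leaves the essential step unresolved: you explicitly flag the bounding of $w$ as ``the main obstacle'' and offer only a hope for how it might be handled. That hope is not misplaced, but as written the argument is a plan, not a proof.

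The resolution is to \emph{eliminate} the witness function $w$ rather than bound it. Fix the rational $\epsilon>0$ with $\leb([T])>\epsilon$ and take as your $\Pi^0_1$ class simply the set of trees $S\subseteq T$ (coded in $2^\omega$) such that $\seq{}\in S$ and, for every $\sigma\in S$ and every $n\ge|\sigma|$,
\[
\#\{\tau\in S : |\tau|=n,\ \tau\succeq\sigma\}\,/\,2^n \;\ge\; \epsilon\cdot 2^{-2|\sigma|-1}.
\]
This is already a $\Pi^0_1$ condition on $S$ alone---no unbounded second coordinate. Nonemptiness follows from the standard pruning: remove from $T$ every $\sigma$ with $\leb([T]\cap[\sigma])<\epsilon\cdot 2^{-2|\sigma|}$; since the minimal removed strings above any surviving $\sigma$ form an antichain, the total measure lost above $\sigma$ is at most $\epsilon\cdot 2^{-2|\sigma|-1}$, leaving at least $\epsilon\cdot 2^{-2|\sigma|-1}$. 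In $\sfup{RCA}_0$ one verifies level by level, using only the finitary counts $\#\{\tau\in T:|\tau|=N,\tau\succeq\sigma\}$, that the associated binary tree of approximations is infinite. Your ``second option'' (finitary counting at each level) points exactly here, but you need to actually specify the threshold and carry out the pruning estimate; without that, the bounding problem you raise is genuine and your class $\mathcal{C}_{\bar w}$ may well be empty for any computable $\bar w$ depending only on $|\sigma|$.
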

\begin{proof}
This is the formalisation in $\sfup{RCA}_0$ of the fact that every PA degree has the strong continuous covering property. 
\noam{Do we need any more?}
\end{proof}

\begin{prop}
$\sfup{RCA}_0 + \sfup{WSWWKL} \vdash \sfup{WWKL}$.
\end{prop}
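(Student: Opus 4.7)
\begin{proof}
Let $T\subseteq 2^{<\omega}$ be a tree of positive measure. By \sfup{WSWWKL}, there is a subtree $S\subseteq T$ of positive measure with no dead ends. In particular, $S$ is nonempty (as a tree of positive measure), so $\langle\rangle\in S$. Since $S$ has no dead ends, for every $\sigma\in S$ there is some $i\in\{0,1\}$ such that $\sigma\conc i\in S$. Working inside $\sfup{RCA}_0$, we may therefore define a function $f\in 2^\omega$ by primitive recursion: let $f(n)$ be the least $i\in\{0,1\}$ such that $(f\rest n)\conc i\in S$. An easy induction shows that $f\rest n\in S\subseteq T$ for every $n$, so $f$ is an infinite path through $T$. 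Hence \sfup{WWKL} holds.
\end{proof}

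The plan, in a sentence, is simply to observe that a tree with no dead ends trivially has an infinite path definable by $\Delta^0_1$ recursion, so passing from the positive-measure tree $T$ to the no-dead-ends subtree $S$ given by \sfup{WSWWKL} is enough; the recursion giving~$f$ is formalisable in $\sfup{RCA}_0$ because the predicate ``$\sigma\in S$'' is part of the data and the choice at each step is bounded, so there is no real obstacle beyond making sure that the function $f$ is produced by recursion rather than by unbounded search.
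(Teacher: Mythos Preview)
Your proof is correct and essentially identical to the paper's: both apply \sfup{WSWWKL} to obtain $S\subseteq T$ with positive measure and no dead ends, note that $S$ contains the root, and then take the leftmost infinite path through~$S$. Your version is slightly more explicit about the $\sfup{RCA}_0$ formalisation of the recursion, but there is no substantive difference.
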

\begin{proof}
\sfup{WWKL} simply says that if $T$ is a tree with positive measure, then $T$ has an infinite path. Given a tree $T$ with positive measure, let $S\subseteq T$ be the positive measure subtree with no dead ends that is guaranteed by \sfup{WSWWKL}. Since $S$ has positive measure, it must contain the root. Since it has no dead ends, we can construct an infinite path $X$ by always following the leftmost branch in $S$. Then $X$ is an infinite path in $T$.
\end{proof}

\begin{prop}
$\sfup{RCA}_0 + \sfup{WWKL} \not\vdash \sfup{WSWWKL}$.
\end{prop}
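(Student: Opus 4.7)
My plan is to build an $\omega$-model of $\sfup{WWKL}$ that fails $\sfup{WSWWKL}$. The second-order part will be a Turing ideal~$\mathcal{S}$ generated by iteratively added $2$-random reals, chosen so that every finite join of generators remains $2$-random. The principle $\sfup{WWKL}$ will hold because enough randomness is available inside~$\mathcal{S}$; the failure of $\sfup{WSWWKL}$ will be forced by the chain ``continuous covering $\Rightarrow$ discrete covering $\Leftrightarrow$ computes a $K$-compression function'' (Propositions~\ref{prop:continuous-to-discete} and~\ref{prop:equiv}) combined with the Bienvenu--Porter theorem that no incomplete Martin-L\"of random computes a $K$-compression function.

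First, I would build the ideal by induction: pick~$R_n$ to be Martin-L\"of random relative to $\emptyset' \oplus R_0 \oplus \cdots \oplus R_{n-1}$, and set $Y_n = R_0 \oplus \cdots \oplus R_{n-1}$. Iterated van Lambalgen relative to~$\emptyset'$ shows that each~$Y_n$ is $2$-random, hence Martin-L\"of random and incomplete (a standard fact; for instance, $2$-randoms compute no noncomputable c.e.\ set). Let $\mathcal{S}$ be the Turing-downward closure of $\{Y_n : n < \omega\}$. To verify $\sfup{WWKL}$ in $(\omega,\mathcal{S})$, given $X \in \mathcal{S}$ I pick~$n$ with $X \le_\Tur Y_n$; then $R_n \in \mathcal{S}$ is Martin-L\"of random relative to~$X$ and supplies an infinite path through any $X$-computable positive-measure tree.

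The main work is then to refute $\sfup{WSWWKL}$ in the model by producing a single computable positive-measure tree that no $X \in \mathcal{S}$ can cover. I would apply the construction in the proof of Proposition~\ref{prop:continuous-to-discete} to the universal discrete-covering instance $A_n = \{\sigma \in 2^{<\omega} : K(\sigma) \le n\}$ (which has $\wt(\bar A) \le 2\Omega < 2$) to obtain a computable $\Sigma^0_1$ class~$U$ with $\leb(U) < 1$, and let $T$ be the computable tree with $[T] = 2^\omega \setminus U$. Given a hypothetical $X$-computable subtree $S \subseteq T$ with no dead ends and $\leb([S]) > 0$, the set $V = 2^\omega \setminus [S]$ is an open superset of~$U$ with $\leb(V) < 1$, and since~$S$ has no dead ends, $S_V = 2^{<\omega} \setminus S$ is $X$-computable; finishing the proof of Proposition~\ref{prop:continuous-to-discete}, together with Remark~\ref{rmk:universal_instance_of_DCP} and Proposition~\ref{prop:equiv}, would then extract an $X$-computable $K$-compression function. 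Since $X \le_\Tur Y_n$, this would force~$Y_n$ to compute a $K$-compression function, contradicting Bienvenu--Porter. The key obstacle is the need to uniformize the failure: ``each $X$ lacks continuous covering for \emph{some} tree'' must be strengthened to ``a single computable tree works for all $X \in \mathcal{S}$,'' and it is exactly the universality of the discrete-covering instance (Remark~\ref{rmk:universal_instance_of_DCP}) that supplies this uniformization.
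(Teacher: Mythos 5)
Your plan is correct and follows essentially the same route as the paper. Both constructions build the ideal~$\mathcal{S}$ so that every set in~$\mathcal{S}$ is computed by an incomplete Martin-L\"of random that is also in~$\mathcal{S}$ (the paper uses columns of a single incomplete ML-random, you use iterated $2$-randoms via van Lambalgen; either works), and both refute $\sfup{WSWWKL}$ by pushing through Proposition~\ref{prop:continuous-to-discete} to the discrete covering property and then invoking the Bienvenu--Porter deep-class machinery. The one substantive difference is which deep class you use at the end: you go via Proposition~\ref{prop:equiv} to a $K$-compression function and cite that the class of $K$-compression functions is deep, whereas the paper goes via Proposition~\ref{prop:sdnc} to an $h$-bounded DNC function for slow $h$ and cites that those classes are deep. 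Both facts are in Bienvenu--Porter and both yield the needed conclusion that incomplete ML-randoms cannot compute a solution, so the two routes are interchangeable; yours is arguably a touch more direct. You are also right to flag the uniformization issue (a single computable tree must defeat all $X\in\mathcal{S}$) and to resolve it via the universality of the instance $A_n=\{\sigma: K(\sigma)\le n\}$ from Remark~\ref{rmk:universal_instance_of_DCP} together with Lemma~\ref{lem:universal}; the paper's phrase ``by formalizing Propositions~\ref{prop:continuous-to-discete} and~\ref{prop:sdnc}'' glosses over exactly this point, so making it explicit is an improvement in presentation rather than a new idea.
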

\begin{proof}
Fix an $\omega$-model $(\omega,\+S)$ of \sfup{WWKL} such that whenever $X\in\+S$, there is an incomplete Martin-L\"of random sequence $Y\in\+S$ such that $X\leq_\Tur Y$. Building such a model is straightforward; for example, we can let~$\+S$ be the ideal generated by the joins of finitely many columns of some incomplete ML-random sequence. We claim that $(\omega,\+S)$ is not a model of \sfup{WSWWKL}.

Assume that $(\omega,\+S)$ actually is a model of \sfup{WSWWKL}. By formalizing Propositions~\ref{prop:continuous-to-discete} and~\ref{prop:sdnc}, it must be the case that for any order function $h\colon\omega\to\omega\smallsetminus\{0,1\}$, there is an $h$-bounded DNC function in $\+S$. However, as mentioned in the introduction, Bienvenu and Porter~\cite{BP:16} showed that for a sufficiently slow growing $h$, only complete Martin-L\"of random sequences can compute $h$-bounded DNC functions. This is a contradiction, so $(\omega,\+S)$ is not a model of \sfup{WSWWKL}.
\end{proof}

In order to construct a model of \sfup{SWWKL} that is not a model of \sfup{WKL}, we need to strengthen Theorem~\ref{thm:main-separation}.

\begin{thm}
Assume that $X$ does not have PA degree. There is an oracle $D$ with the strong continuous covering property relative to $X$ such that $X\oplus D$ does not have PA degree.
\end{thm}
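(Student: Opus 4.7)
The plan is to relativize the proof of Theorem~\ref{thm:main-separation} to the oracle $X$. Forcing conditions become triples $(u,P,\bar q)$ where $P \subseteq [u]$ is a $\Pi^0_1(X)$ class whose members are trees contained in $\PP^X$, the complement of the $X$-universal Martin-L\"of test (in place of $\PP$); we take an $X$-generic filter $G$ and let $D = T_G$. The relativization of Lemma~\ref{lem:universal} shows that $D$ has the strong continuous covering property relative to $X$, so it remains to verify that $X \oplus D \equiv_\Tur X \oplus T_G$ does not have PA degree.

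Lemmas~\ref{lem:gain_weight_now_heres_a_cake} through~\ref{lem:tree_preparation} relativize verbatim. In particular, the relativization of Lemma~\ref{lem:gain_weight_now_heres_a_cake} would, in its bad case, produce an $X$-algorithm computing $\emptyset'$; since $\emptyset'$ is of PA degree, this would force $X$ itself to be of PA degree, contradicting the hypothesis.

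The only step requiring real modification is the density of $D_\Gamma = \{(u,P,\bar q) : (\forall T \in P_{\ge \bar q})\; \Gamma^{X \oplus T} \notin \DNC_2\}$, for plain Turing functionals $\Gamma$. Given $(u^*, P^*, \bar r - 3\bar\epsilon)$ in the breathing-room set $F$ and a functional $\Gamma$, we mimic the original inner argument with $X$-oracle access. For each $e$, set $C_e = \{\vs \in [u^*]_{>\bar r - 2\bar\epsilon} : \Gamma(X \oplus \vs, e) \downarrow\}$, which is $\Sigma^0_1(X)$, and $Q_e = \{T \in P^* : (\forall \vt \subset T)\; \vt \notin C_e\}$. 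Let $\alpha(e)$ be the colour $i_e \in \{0,1\}$ produced, via the $X$-relativizations of Lemmas~\ref{lem:compression_functions:the_set_C} and~\ref{lem:getting_one_colour}, once an $X$-effective search witnesses $Q_{e, \ge \bar r - 2\bar\epsilon} = \emptyset$; otherwise $\alpha(e)$ is undefined. The resulting function $\alpha$ is partial $X$-computable.

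The original proof invoked the recursion theorem to produce an index $e$ with $J(e) = i$; this step fails in the relativized setting since $\alpha$ is only $X$-computable. The hard part is to replace it, and we do so by the following dichotomy. If $\alpha(e_1)\uparrow$ for some $e_1$, then $Q_{e_1, \ge \bar r - 2\bar\epsilon}$ is nonempty, so the extension $(u^*, Q_{e_1}, \bar r - 2\bar\epsilon)$ lies in $D_\Gamma$ and forces $\Gamma(X \oplus T_G, e_1) \uparrow$. Otherwise $\alpha$ is a total $X$-computable $\{0,1\}$-valued function; were it a $\DNC_2$ function, $X$ itself would be of PA degree, contradicting the hypothesis. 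Hence some $e_0$ satisfies $J(e_0) \downarrow = \alpha(e_0)$, and, just as in the original proof, we can choose $\vt \in \widehat{E}_{e_0}$ extendible in $P^*_{>\bar r - \bar\epsilon}$ and $\vs \subseteq \vt$ in $C_{e_0}$ with $\Gamma(X \oplus \vs, e_0) = \alpha(e_0)$, and pass to a condition forcing $\Gamma(X \oplus T_G, e_0) = \alpha(e_0) = J(e_0)$. Thus $\Gamma^{X \oplus T_G}$ is not $\DNC_2$, and since $\Gamma$ was arbitrary, $X \oplus D$ fails to have PA degree.
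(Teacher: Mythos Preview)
Your proof is correct and takes essentially the same approach as the paper: relativize the forcing to $\PP^X$, and replace the recursion-theorem step by the dichotomy on whether the $X$-partial computable colour function (your $\alpha$, the paper's $\psi$) is total, using the hypothesis that $X$ is not of PA degree to find an $e$ with $\alpha(e)=J(e)$ in the total case. The only differences are cosmetic (the paper speaks of $X$-computable functionals $\Gamma$ applied to $T$ rather than plain functionals applied to $X\oplus T$).
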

\begin{proof}
We modify the proof of Theorem~\ref{thm:main-separation}. We use the same forcing notion, except that we replace $\PP$ by~$\PP^X$. Rather than using the recursion theorem, we argue as follows. Let~$\Gamma$ be an $X$-computable functional, and let $(u^*,P^*,\bar r-3\bar \epsilon)$ be a condition in~$F$. For every~$e$, let $\psi(e)$ be the output of the computable process described in the proof above when computing $\Gamma(\vs,e)$. The function~$\psi$ is $X$-partial computable. If~$\psi$ is not total, then the corresponding extension $(u^*,Q,\bar r-2\bar \epsilon)$ is an extension forcing that $\Gamma(h,e)\diverge$. Otherwise, $\psi$ is an $X$-computable function. By assumption, $\psi\notin \DNC_2$; so there is some~$e$ such that $\psi(e)= J(e)$. The corresponding extension $(\vs,P^*\cap [\vs],\bar p)$ then forces that $\Gamma(h,e) = J(e)$. 
% \hl{[TO BE WRITTEN]}
\end{proof}

\begin{prop}\label{prop:RM4}
$\sfup{RCA}_0 + \sfup{SWWKL} \not\vdash \sfup{WKL}$.
\end{prop}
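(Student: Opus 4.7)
The plan is to construct an $\omega$-model $(\omega,\mathcal{S})$ of $\sfup{RCA}_0+\sfup{SWWKL}$ that contains no set of PA degree. Since the PA degrees are upward closed in the Turing degrees, and the set of $\{0,1\}$-valued DNC functions is a computable binary tree all of whose paths have PA degree, any such model automatically fails $\sfup{WKL}$: that tree belongs to $\mathcal{S}$ but has no path in~$\mathcal{S}$.

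We build $\mathcal{S}$ by iterating the strengthened separation theorem just proved. Set $X_0=\emptyset$, and given $X_n$ which does not have PA degree, apply that theorem with oracle $X_n$ to obtain a set $D_n$ having the strong continuous covering property relative to~$X_n$ and such that $X_{n+1}:=X_n\oplus D_n$ still does not have PA degree. Let
\[
\mathcal{S}\;=\;\bigcup_{n\in\omega}\{Y : Y\leq_\Tur X_n\}.
\]
This is a Turing ideal, so $(\omega,\mathcal{S})\models\sfup{RCA}_0$; since no $X_n$ has PA degree and every $Y\in\mathcal{S}$ lies below some~$X_n$, the upward closure of the PA degrees ensures that no member of $\mathcal{S}$ has PA degree.

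To verify $\sfup{SWWKL}$ in $(\omega,\mathcal{S})$, let $T\in\mathcal{S}$ be a tree of positive measure, and fix~$n$ such that $T\leq_\Tur X_n$. The strong continuous covering property of $D_n$ relative to $X_n$ yields a subtree $S\subseteq T$ computable from $X_n\oplus D_n=X_{n+1}$ such that $\leb([S]\cap[\s])>0$ for every $\s\in S$; then $S\in\mathcal{S}$ is the desired $\sfup{SWWKL}$-solution for~$T$. The only substantive step is the strengthened separation theorem, which is already in place; the iteration is then essentially a single pass of bookkeeping, since at stage $n+1$ the property of~$D_n$ simultaneously handles every $X_n$-computable tree of positive measure, so no explicit enumeration of instances is required.
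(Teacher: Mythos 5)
Your proposal is correct and follows essentially the same route as the paper: iterate the strengthened separation theorem to build a Turing ideal containing no set of PA degree but closed under the strong continuous covering property (relativized), then observe that such a model satisfies $\sfup{SWWKL}$ and fails $\sfup{WKL}$. The paper states the iteration more succinctly, but the underlying construction and verification are the same.
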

\begin{proof}
By iterating the previous result, build an $\omega$-model $(\omega,\+S)$ such that
\begin{itemize}
\item $\+S$ is a Turing ideal;
\item for every $X\in\+S$, there is a $D\in\+S$ with the strong continuous covering property relative to $X$;
\item $\+S$ contains no set of PA degree.
\end{itemize}
Therefore, $(\omega,\+S)\models\sfup{RCA}_0 + \sfup{SWWKL} + \neg\sfup{WKL}$.
\end{proof}

We should mention a connection to recent work of Chong, Li, Wang, and Yang~\cite{CLWY:}, who studied the complexity of computing perfect subsets of sets of positive measure. They report that during a discussion with Wei Wang about their work, Ludovic Patey proved that
\[
\sfup{RCA}_0 + \text{``every closed set of positive measure has a perfect subset''} \not\vdash \sfup{WKL}.
\]
Note that the principle \sfup{SWWKL} implies that every closed set of positive measure has a perfect subset \emph{of positive measure}, so Proposition~\ref{prop:RM4} improves on the result of Patey. More recently, Barmpalias and Wang have announced that they showed that $\sfup{RCA}_0$ together with the principle ``every closed set of positive measure has a perfect subset of positive measure'' does not imply $\sfup{WKL}$.

\bibliographystyle{plain}
\bibliography{references}

\end{document}